\newcommand{\nn}{\nonumber}
\newcommand{\p}{\partial}
\newcommand{\bx}{\mathbf{x}}
\newcommand{\bn}{\mathbf{n}}
\newcommand{\bu}{\mathbf{u}}
\newcommand{\Div}{\nabla\!\cdot\!}
\tikzset{->-/.style={decoration={
  markings,
  mark=at position #1 with {\arrow{>}}},postaction={decorate}}}
  \tikzset{middlearrow/.style={
        decoration={markings,
            mark= at position 0.55 with {\arrow{#1}} ,
        },
        postaction={decorate}
    }
}          
\newtheorem{lemma}{Lemma}[section]
\newtheorem{proposition}{Proposition}[section]
\newtheorem{remark}{Remark}[section]
\numberwithin{figure}{section}
\numberwithin{equation}{section}
\journal{Journal de Math\'ematiques Pures et Appliqu\'ees}
\begin{document}

\begin{frontmatter}



\title{Extended water wave systems of Boussinesq equations on a finite interval: Theory and numerical analysis}

\author{Dionyssios Mantzavinos\,$^a$ and Dimitrios Mitsotakis\,$^{b,}$\corref{cor}}

\address[ad1]{Department of Mathematics, University of Kansas, U.S.A.}

\address[ad2]{Department of Mathematics and Statistics, Victoria University of Wellington, New Zealand
\\[10mm]
{\normalsize Dedicated to the memory of Vassilios A. Dougalis}
}

\begin{abstract}
Considered here is a class of Boussinesq systems of Nwogu type. Such systems describe propagation of nonlinear and dispersive water waves of significant interest such as solitary and tsunami waves. 
The initial-boundary value problem on a finite interval for this family of systems is studied both theoretically and numerically. First, the linearization of a certain generalized Nwogu system is solved analytically via the unified transform of Fokas. The corresponding analysis reveals two types of admissible boundary conditions, thereby suggesting  appropriate boundary conditions for the nonlinear Nwogu system on a finite interval. 
Then, well-posedness is established, both in the weak and in the classical sense, for a regularized Nwogu system in the context of an initial-boundary value problem that describes the dynamics of water waves in a basin with wall-boundary conditions. 
In addition, a new modified Galerkin method is suggested for the numerical discretization of this regularized system in time, and its convergence   is proved along with optimal error estimates. Finally, numerical experiments illustrating the effect of the boundary conditions on the reflection of solitary waves by a vertical wall are also provided. 
\end{abstract}

\begin{keyword}
Boussinesq systems, initial-boundary value problem, unified transform of Fokas,
well-posedness, Galerkin/finite element method


\MSC[2020] 35G46, 35G61, 65M60

\end{keyword}

\cortext[cor]{dimitrios.mitsotakis@vuw.ac.nz}

\end{frontmatter}



\section{Introduction}

Nonlinear and dispersive water waves are described by the Euler system of equations~\cite{Whitham2011}. As Euler's equations still remain one of the hardest problems to solve (even numerically), various simplified systems of partial differential equations have been suggested as alternative approximations, especially for long waves of small amplitude because of their applications. Waves of this type are also called weakly nonlinear and weakly dispersive waves. Tsunamis, solitary waves, internal waves and even atmospheric waves fall into the regime of weakly nonlinear and weakly dispersive waves \cite{Whitham2011}. The aforementioned simplified systems should obey the laws of physics and mathematics; importantly, they should be well-posed when supplemented with physically sound boundary conditions (Newton's principle of determinacy), admit classical solitary wave solutions, preserve symmetries and reasonable forms of invariants such as energy, and agree with  laboratory experiments. Compliance with such fundamental laws is along the lines of scientific rigor and justifies the use of such systems in practical applications.

After the pioneering work of Boussinesq \cite{Bous1871,Bous1872}, several Boussinesq systems have been introduced to describe the propagation of weakly nonlinear and weakly dispersive waves, including Peregrine's system~\cite{Per67}. Peregrine's system, also known as classical Boussinesq system in the case of flat bottom topography, was rederived in one dimension along with a whole class of asymptotically equivalent systems known as the $abcd$-systems \cite{BCS2002}. The Cauchy problem on the real line for this class of Boussinesq systems was studied in~\cite{Schonbek1981,BCS2004,saut2012cauchy}. In scaled and non-dimensional variables, the $abcd$ family of systems takes the form
\begin{equation}\label{eq:coeffs}
\begin{aligned}
\eta_t+u_x+\varepsilon (\eta u)_x+\sigma^2 \left(au_{xxx}-b\eta_{xxt}\right)=0\ , \\
u_t+\eta_x+\varepsilon uu_x+\sigma^2 \left(c\eta_{xxx}-du_{xxt}\right)= 0\ ,
\end{aligned}
\end{equation}
where $x$ denotes the horizontal spatial independent variable, $t$ is the time, $\eta=\eta(x,t)$ is the free surface elevation above a flat bottom located at depth $D_0=-1$, $u=u(x,t)$ is the horizontal velocity of the fluid measured at depth $\theta D_0$ with $\theta\in [0,1]$, $\varepsilon, \sigma$ are small parameters characterizing the nonlinearity and the dispersion of the waves, and $a,b,c,d$ are parameters given by
\begin{equation*}
\begin{aligned}
&a=\tfrac{1}{2}\left(\theta^2-\tfrac{1}{3}\right)\nu, 
\quad
b=\tfrac{1}{2}\left(\theta^2-\tfrac{1}{3}\right)(1-\nu),
\\
&c=\tfrac{1}{2}\left(1-\theta^2\right)\mu,
\quad
d=\tfrac{1}{2}\left(1-\theta^2\right)(1-\mu),
\end{aligned}
\quad
\mu,\nu\in \mathbb{R},
\end{equation*}
so that $a+b+c+d=1/3$.
The parameters $\varepsilon$ and $\sigma^2$ in the Boussinesq regime are of the same order, and the Stokes number is $S=\varepsilon/\sigma^2=O(1)$. Hence, as usual, throughout this work we take $\varepsilon=\sigma^2$ for simplicity and without loss of generality. 

Some important representatives of the $abcd$ family of systems \eqref{eq:coeffs} include systems that are well-posed and also admit classical solitary waves as special solutions. Examples of such systems are the  Bona-Smith system ($a=0$, $b>0$, $c\leq0$, $d>0$), which includes the coupled Benjamin-Bona-Mahony  BBM-BBM system as a special case ($c=0$), the classical Boussinesq system ($a=b=c=0$, $d>0$), the \textit{Nwogu system} ($a<0$, $b=c=0$, $d>0$), and the \textit{regularized Nwogu system}  or ``reverse'' Bona-Smith system  ($a<0$, $b>0$, $c=0$, $d>0$). It should be noted that the Nwogu system  was derived in \cite{Nwogu93} as an alternative to the Peregrine system with the ability to choose the coefficients $a,b,c,d$ so that the linear dispersion relation becomes optimal compared to the corresponding linear dispersion relation of the Euler equations.

In practical applications and numerical simulations, systems like those mentioned above are posed in bounded domains. This fact highlights the need for appropriately formulated initial-boundary value problems. An important issue in this direction is the choice of appropriate boundary conditions. In particular, in the case of both the regularized and the non-regularized Nwogu systems, it is \textit{not a priori clear}  how many boundary values --- and of which type --- must be specified as data for a well-posed problem on the finite interval.  
One of the main results of this work is the identification of appropriate boundary conditions on the finite interval for the following \textit{generalized Nwogu system}:
\begin{equation}\label{lrn-i}
\begin{aligned}
\eta_t+u_x+\varepsilon (\eta u)_x+\varepsilon \left(a u_{xxx}-b \eta_{xxt}\right) = 0\ ,\\
u_t+\eta_x+\varepsilon uu_x -\varepsilon du_{xxt}=0\ ,
\end{aligned}
\quad 
(x, t) \in \left(-L, L\right) \times \left(0, T\right)\ ,
\end{equation}
with $a<0$, $b\geq 0$, $c=0$, $d>0$,  which contains both the regularized and the original Nwogu system, for $b>0$ and $b=0$ respectively.

Appropriate boundary conditions for the generalized system \eqref{lrn-i} are identified through solving the \textit{linear} counterpart of that system via the \textit{unified transform}. This method was first introduced by Fokas in \cite{f1997} (see also the monograph \cite{fokas2008} and the review article \cite{dtv2014}) and has since been employed for the analysis of linear as well as initial-boundary value problems in various settings --- see, for example,~\cite{fi2004,FP2005,ff2008,ffss2009,fl2012,fp2015,hm2015a,hm2015b,fhm2017,hm2020}. Recent developments have led to the advancement of the unified transform to \textit{systems} of PDEs, in particular via the work \cite{dgsv2018}. 
Exploiting the framework laid out in \cite{dgsv2018} along with recent progress noted in  \cite{jgm2021} on the linearization of the classical Boussinesq equation on the half-line, here we employ the unified transform to derive a \textit{novel, explicit solution formula} for the linearization  of the generalized Nwogu system \eqref{lrn-i} on a finite interval:
\begin{equation}\label{lng-ibvp-i}
\begin{aligned}
\eta_t + u_x + \varepsilon \left(a  u_{xxx} - b \eta_{xxt}\right) = 0\ , 
\\
u_t + \eta_x - \varepsilon d u_{xxt} = 0\ ,
\end{aligned}
\quad (x, t) \in (-L,L) \times (0, T)\ .
\end{equation}
While the initial conditions accompanying this system are the usual ones, namely $\eta(x, 0)$ and $u(x, 0)$ given, we perform our analysis without initially specifying any boundary conditions. Instead, we discover the conditions that lead to an explicit solution formula (and hence to a well-formulated problem) through the application of the unified transform. 
In particular, our analysis indicates that one of the following two pairs of boundary values must supplement system \eqref{lng-ibvp-i} as boundary conditions  (see also Remark~\ref{adm-bc-r}):
\begin{equation}\label{adm-bc-i}
\left\{u(\pm L, t), u_{xx}(\pm L, t)\right\} \ \ \text{or} \ \ \left\{\eta(\pm L, t), u_x(\pm L, t)\right\}.
\end{equation}
This finding provides strong theoretical evidence that the boundary conditions \eqref{adm-bc-i} should lead to a well-posed problem for the nonlinear generalized Nwogu system \eqref{lrn-i} and, in particular, for the original Nwogu system on a finite interval.

We note that our analysis, via the unified transform, of the linear system \eqref{lng-ibvp-i} is carried out for \textit{nonzero} boundary conditions of the form \eqref{adm-bc-i}. Nevertheless, one of the most important initial-boundary value problems for Nwogu-type systems is the one with wall-boundary conditions on the boundaries of a basin.  The well-posedness of this initial-boundary value problem with reflection boundary conditions for the Peregrine system and its linearization was studied in \cite{FP2005, Adamy2011, jgm2021}. Specifically, it was proven that for Peregrine's system only the classical homogeneous Dirichlet \textit{wall-boundary conditions}
\begin{equation}\label{eq:wallbc}
    u(-L,t)=u(L,t)=0\ ,
\end{equation} 
are required for well-posedness, ensuring that there is no flow through the boundaries. Analogous initial-boundary value problems have been studied in detail for Bona-Smith systems in \cite{ADM2009}, while their special case of BBM-BBM systems was studied in \cite{BC1998}. There, it was shown that in addition to the wall-boundary condition~\eqref{eq:wallbc} homogeneous Neumann boundary conditions for  $\eta$ are also required:
\begin{equation}\label{eq:wallbc2}
    \eta_x(-L,t)=\eta_x(L,t)=0\ .
\end{equation}
Although these conditions are not satisfied by Peregrine's system, they are satisfied by the solutions of the Euler equations when wall-boundary conditions are imposed \cite{Khakimzyanov2018a} (see also Appendix \ref{sec:appendix}). Thus, satisfying both boundary conditions \eqref{eq:wallbc} and \eqref{eq:wallbc2}  reflects  a more accurate description of water waves in a basin.

The physical relevance of homogeneous (zero) boundary conditions as illustrated above motivates the study of well-posedness for the nonlinear \textit{regularized Nwogu system}, namely system  \eqref{lrn-i} with $b>0$, supplemented with such conditions on a finite interval. In particular, the second main result of this work establishes the well-posedness of that system in the case of reflective boundary conditions (the Dirichlet  problem was analyzed in \cite{ADM2009}). 
We note that, although in this work we restrict ourselves to the case of a flat bottom, it is anticipated that the variable bottom case can be handled by using similar concepts.
The particular initial-boundary value problem with reflective boundary conditions that we consider for the regularized Nwogu system can be written in dimensionless and scaled variables as
\begin{equation}\label{eq:Nwogu3nd}
\begin{aligned}
&\begin{aligned}
&\eta_t+u_x+\varepsilon (\eta u)_x+\varepsilon \left(a u_{xxx}-b \eta_{xxt}\right) =0\ ,\\
&u_t+\eta_x+\varepsilon uu_x -\varepsilon du_{xxt}=0\ , 
\end{aligned} \quad (x, t) \in (-L,L) \times (0, T)\ ,\\
& \eta(x,0)=\eta_0(x),\quad u(x,0)=u_0(x)\ ,\\
& u(-L,t)=u(L,t)=0\ ,\quad  u_{xx}(-L,t)=u_{xx}(L,t)=0\ ,
\end{aligned}
\end{equation}
where $a<0$ and $b,d>0$ such that $a+b+d=1/3$. As noted earlier, in the case where $a=0$ and $b,d>0$, the system reduces to the BBM-BBM system which was analyzed extensively in \cite{BC1998}, while for $a<0$, $b=0$ and $d>0$ the system becomes the well-known Nwogu system \cite{Nwogu93}. Observe that, because of the boundary conditions on $u$, the second (momentum) equation in \eqref{eq:Nwogu3nd} yields the additional conditions \eqref{eq:wallbc2}, which are satisfied implicitly, and thus there is no need for them to be explicitly stated. Furthermore, as shown in  Appendix \ref{sec:appendix}, the second set of boundary conditions
$u_{xx}(\pm L,t)=0$ in \eqref{eq:Nwogu3nd}
are also satisfied by the solutions of Euler's equations.

After  proving that the nonlinear system \eqref{eq:Nwogu3nd} is well-posed in the Hadamard sense locally in time, we study its numerical discretization with Galerkin/finite element method. Wall-boundary conditions for the numerical solution of the Nwogu  system were first suggested in \cite{walkley1999,WB1999,WB2002} but \textit{without} theoretical justification. In the special case of the linearized Nwogu system, one can establish well-posedness with the particular wall-boundary conditions using Galerkin approximations \cite{CWW2004}. The presence of the third-order spatial derivative $u_{xxx}$ in Nwogu-type systems, like the term $\eta_{xxx}$ in the case of the Bona-Smith system, makes their numerical discretization with Galerkin methods challenging. This difficulty, for example, can be related to the requirement of well-defined second derivative of the velocity component $u$ of the numerical solution. While Lagrange elements  guarantee only local smoothness, smooth cubic splines (at least) are left to be used for the standard Galerkin method accompanied with suboptimal convergence results \cite{ADM2010, DMS2007}. As a remedy to this problem, a modification of the standard Galerkin method for the Nwogu system was suggested in \cite{WB1999}, allowing the use of Lagrange elements. While the convergence of that particular method is still unclear, a similar modified Galerkin method was studied and proven to be convergent in the case of the Bona-Smith system \cite{DMS2010}. Here, we develop the analogous modified Galerkin/finite element method for the regularized Nwogu system \eqref{eq:Nwogu3nd}, and we show that its semidiscrete Galerkin approximations converge to the analytical solutions of \eqref{eq:Nwogu3nd} with optimal convergence rate.

\vskip 2mm
\noindent
\textbf{Structure.} This work is organized as follows. 
In Section \ref{sec:linear}, using the unified transform of Fokas  we obtain an explicit solution formula for the linearization \eqref{lng-ibvp-i}  of the generalized Nwogu system~\eqref{lrn-i} on the finite interval $(-L, L)$. Our analysis shows that this problem (and hence its nonlinear counterpart) is well-formulated if either $\left\{u(\pm L,t), u_{xx}(\pm L,t)\right\}$ or $\left\{\eta(\pm L,t), u_x(\pm L,t)\right\}$ are prescribed as boundary conditions.
In Section \ref{sec:wp-s}, we establish local Hadamard well-posedness for the regularized Nwogu system~\eqref{eq:Nwogu3nd}   in the case of  wall-boundary conditions $u(\pm L,t)=u_{xx}(\pm L,t)=0$ (the well-posedness of \eqref{eq:Nwogu3nd} with  $\eta(\pm L,t)=u_x(\pm L,t)=0$ was proved in \cite{ADM2009}). 
In Section \ref{num-s}, we provide the derivation and analysis of a modified Galerkin method for the numerical solution of the regularized system~\eqref{eq:Nwogu3nd}. The convergence of the method is also verified experimentally, while a demonstration of the reflection of solitary waves illustrates the practical use of the initial-boundary value problem with wall-boundary conditions. 
Finally, some brief concluding remarks are given in Section~\ref{conc-s}, while the use of the particular set of wall-boundary conditions is justified in Appendix \ref{sec:appendix} by showing that  solutions to the Euler equations satisfy the same wall-boundary conditions even in the case of variable bottom topography.

\vskip 2mm
\noindent
\textbf{Notation.}
Throughout this work, we denote by $L^2:=L^2(-L,L)$  the Hilbert space of measurable, square-integrable real-valued functions on $(-L,L)$.
For any integer $s\geq 0$, we denote by $H^s:=H^s(-L,L)$   the classical Sobolev space of $s$-times weakly differentiable functions on $(-L, L$), 
$$H^s=\left\{v\in L^2: \partial_x^j v\in L^2 \text{ for all } j=0,1,\ldots, s\right\}\ ,
$$
accompanied with the usual norm 
$\left\|v\right\|_s := \left(\sum_{j=0}^s \int_{-L}^L \left|\partial_x^j v(x)\right|^2 dx \right)^{1/2}$, 
where $\partial_x^j$ denotes the $j$-th partial derivative with respect to $x$.
Note that $H^0=L^2$, while the norm of $L^2$ will be denoted by $\|\cdot\|$. For $m\geq 0$, we will also consider the Banach space $C^s:=C^s(-L,L)$ of real-valued $s$-times continuously differentiable functions defined on $[-L,L]$, equipped with the norm
$$\left\|v\right\|_{C^s}:=\sup_{0\leq j\leq s}~ \sup_{x\in [-L,L]}\left|\partial_x^j v(x)\right|\ .
$$
We write $A \lesssim B$ if $A \leq C B$ with $C>0$ a  constant independent of discretization parameters such as~$\Delta x$ or other crucial parameters.

\section{Explicit solution of the linear problem}\label{sec:linear}

In this section, we employ the unified transform of Fokas in order to solve the linear counterpart~\eqref{lng-ibvp-i} of the generalized Nwogu system \eqref{lrn-i} on the finite interval $(-L,L)$. Our analysis reveals that the combinations  \eqref{adm-bc-i}
%
%
are two possible choices of \textit{admissible boundary conditions} for this linear problem. As such, these two combinations should also lead to a well-posed problem  at the nonlinear level, supporting the theoretical and numerical findings of Sections \ref{sec:wp-s} and \ref{num-s}. The first set of data in \eqref{adm-bc-i}  describes wall-boundary conditions and so it can be used for studying the reflection of water waves on a vertical wall, while the second set of data corresponds to the wave maker problem.

Importantly, we note that: (i) The calculations of this section remain  valid for  $b = 0$, which is the value corresponding to the original  Nwogu system \eqref{eq:Nwogu3nd}. In particular, setting $b = 0$ (equivalently, $\beta=0$) in the solution formulas \eqref{sols-comb} yields the corresponding solutions to the linearization of the Nwogu system (see problem \eqref{lnwog-ibvp} in Remark \ref{non-reg-r}). (ii) Furthermore, our analysis and the resulting solution formulas  hold for general \textit{nonzero} boundary conditions.
(iii) In addition, since swapping $\eta$ with $u$ transforms the generalized Nwogu system  into the Bona-Smith system, the  formulas \eqref{sols-comb} derived here provide the solution also for the linearized Bona-Smith system formulated with nonzero boundary conditions on a finite interval.
%
\subsection{Derivation of the global relation}
We begin by noting that throughout this section we assume sufficient smoothness and decay as needed for our computations to hold.
Setting $\alpha = -a \varepsilon >0$, $\beta=b \varepsilon \geqslant 0$, $\delta = d \varepsilon>0$ allows us to write the linear generalized Nwogu  system~\eqref{lng-ibvp-i} in the form
\begin{equation}\label{lng-ibvp}
\begin{aligned}
\eta_t + u_x - \alpha u_{xxx} - \beta \eta_{xxt} = 0\ , 
\\
u_t + \eta_x - \delta u_{xxt} = 0\ ,
\end{aligned}
\quad (x, t) \in (-L,L) \times (0, T)\ .
\end{equation}
While we supplement  system \eqref{lng-ibvp} with the usual initial conditions
\begin{equation}\label{lng-ic}
\eta(x,0)=\eta_0(x),\quad u(x,0)=u_0(x)\ ,
\end{equation}
we do not yet specify any boundary conditions as it is not a priori clear what choices of boundary data  lead to a well-formulated problem. Instead, we introduce the notation 
\begin{equation}\label{lng-bv}
\begin{aligned}
&g_j(t) := \p_x^j u(-L, t), \ h_j(t) := \p_x^j u(L, t), \  j=0, 1, 2\ ,
\\
&g_3(t) := \eta(-L, t), \ h_3(t) := \eta(L, t)\ ,
\end{aligned}
\end{equation}
for the various boundary values that arise in our analysis and defer the prescription of some of these as boundary conditions to a later point.

Let the finite-interval Fourier transform pair of a function $f\in L^2(-L,L)$ be defined by
\begin{equation}\label{ft-def}
\widehat  f(k) = \int_{x=-L}^L e^{-ikx} f(x) dx, \quad k\in \mathbb C,  
\quad
f(x) = \frac{1}{2\pi} \int_{k\in \mathbb R} e^{ikx} \widehat  f(k) dk, \quad  x\in (-L,L)\ ,
\end{equation}
and note that, since $x$ is bounded,  $\widehat f(k)$ is an entire function of $k$ via a Paley-Wiener-type theorem (e.g. see Theorem 7.2.3 in \cite{s1994}).
Taking the Fourier transform \eqref{ft-def} of  system \eqref{lng-ibvp} while noting that the second component of \eqref{lng-ibvp} yields
$\eta_x (-L, t) = \delta g_2'(t) - g_0'(t)$
and
$ \eta_x (L, t) = \delta h_2'(t) - h_0'(t)$,
we obtain the vector ODE
\begin{equation}\label{lng-ft}
 \widehat {\mathbf{v}}_t(k, t) + M(k) \, \widehat {\mathbf{v}}(k, t) 
=
\mathbf{A}(k, t), 
\quad
k \in \mathbb C \setminus \left\{\pm \tfrac{i}{\sqrt \delta}, \pm \tfrac{i}{\sqrt \beta}\right\}, 
\end{equation}
where $\mathbf{v} = \left(\eta, u\right)^T$, $M$ is a $2\times 2$ matrix given by
\begin{equation*}
M(k)
=
ik\left(\begin{array}{lr}
0 & \left(1+\alpha k^2\right)\left(1+\beta k^2\right)^{-1} 
\\
\left(1+\delta k^2\right)^{-1} & 0
\end{array}\right)\ ,
\end{equation*}
and $\mathbf{A} = \left(A, B\right)^T$ is a vector with components
\begin{align}\label{A-vec-def}
A(k, t) 
&= 
\frac{1}{1+\beta k^2}
\Big\{
-\left(1+\alpha k^2\right) \left[e^{-ikL} h_0(t) - e^{ikL} g_0(t)\right]
\nn\\
&\quad
+  i \alpha k \left[ e^{-ikL} h_1(t)- e^{ikL} g_1(t)\right]
+ \alpha \left[e^{-ikL} h_2(t) - e^{ikL} g_2(t)\right]
\Big\}
\nn\\
&\quad
+
\frac{\beta}{1+\beta k^2} 
\Big\{
\delta \left[e^{-ikL} h_2''(t) - e^{ikL} g_2''(t) \right]
-\left[e^{-ikL} h_0''(t) - e^{ikL} g_0''(t) \right]
\nn\\
&\quad
+ ik \left[
e^{-ikL} h_3'(t) - e^{ikL} g_3'(t) \right] 
\Big\}\ ,
\\
B(k, t) 
&= 
\frac{1}{1+\delta k^2}
\, \Big\{
-\left[e^{-ikL} h_3(t) - e^{ikL} g_3(t)\right] + \delta \left[ e^{-ikL} h_1'(t) - e^{ikL} g_1'(t)\right] 
\nn\\
&\quad
+ i\delta k \left[ e^{-ikL} h_0'(t) - e^{ikL} g_0'(t)\right]
\Big\}\ .
\nn
\end{align}
Integrating \eqref{lng-ft} with respect to $t$ and using the initial conditions \eqref{lng-ic}, we obtain what is known in the unified transform terminology as the global relation:
\begin{equation}\label{gr-v}
\widehat {\mathbf{v}}(k, t)
=
e^{-M t} \, \widehat {\mathbf{v}}_0(k)
+
e^{-M t} \int_{\tau=0}^t e^{M \tau} \mathbf{A}(k, \tau) d\tau, 
\quad
k \in \mathbb C \setminus \left\{\pm \tfrac{i}{\sqrt \delta}, \pm \tfrac{i}{\sqrt \beta}\right\}\ ,
\end{equation}
where $\widehat {\mathbf{v}}_0(k) = \left(\widehat \eta_0(k), \widehat u_0(k)\right)^T$.

The vector $\mathbf A$ given by \eqref{A-vec-def} involves eight different boundary values, four at each endpoint of the interval $[-L, L]$. However, our analysis will show that only two of these values at each endpoint can be prescribed as boundary data. 
To see this, we must first write the vector equation \eqref{gr-v} in component form. For this purpose, we diagonalize the matrix $M$ in order to express the exponential $e^{Mt}$ in explicit form. We then have $M = P D P^{-1}$ with
\begin{equation}\label{eval-eq}
D = i\omega 
\left(
\begin{array}{cc}
1 & 0
\\
0 &-1
\end{array}
\right),
\quad
P = \left(
\begin{array}{cc}
1 & 1
\\
\frac{\mu_\beta }{\mu_\alpha \mu_\delta  } 
& -\frac{\mu_\beta }{\mu_\alpha \mu_\delta  } 
\end{array}
\right),
\quad
\omega(k) =   \frac{k \mu_\alpha }{\mu_\delta   \mu_\beta }\ ,
\end{equation}
where the functions 
$$
\mu_\alpha(k)  = \left(1+\alpha k^2\right)^{\frac 12},
\quad 
\mu_\delta(k)    = \left(1+\delta k^2\right)^{\frac 12},
\quad
\mu_\beta(k)  = \left(1+\beta k^2\right)^{\frac 12},
$$
are made single-valued by taking appropriate branch cuts in the complex $k$-plane (see Figure \ref{branch-cut-f}). For $\alpha >  \delta > \beta$, these individual branch cuts 
combine to the following branch cut for the function $\omega$:
\begin{equation}\label{bcut-def}
\mathcal B 
:=
i\left[\tfrac{1}{\sqrt\alpha}, \tfrac{1}{\sqrt \delta} \right]\cup \, i\left[-\tfrac{1}{\sqrt \delta}, -\tfrac{1}{\sqrt\alpha}\right]
\cup \, i\left(-\infty, -\tfrac{1}{\sqrt\beta}\right] 
\cup i\left[\tfrac{1}{\sqrt \beta}, \infty\right).
\end{equation}
Also, if $\delta>\alpha>\beta$ then the branch cut for $\omega$ is like the one of Figure \ref{branch-cut-f}  but with $\pm i/\sqrt \alpha$ and $\pm i/\sqrt \delta$ swapped. Therefore, although the branch cut \eqref{bcut-def} corresponds to the case $\alpha>\delta>\beta$, our analysis also covers the case $\delta>\alpha>\beta$.
On the other hand, if $\beta>\max\left\{\alpha, \delta\right\}$ then the branch cut would need to be modified more substantially; however, this case is not really relevant for our purposes, since our main objective behind considering the generalized Nwogu system is to eventually be able to take the limit $\beta \to 0$ and infer results for the original  system (e.g. see Remark \ref{non-reg-r}).
Note that for $\alpha = \delta$ and $\beta = 0$, we have $\omega(k) = k$ and the branching disappears.

\begin{figure}[ht!]
\centering
\begin{minipage}{0.3\textwidth}
\begin{tikzpicture}[scale=2.2, rotate=0]
\draw [->,>=Stealth] (-0.55, 0) -- (1.18, 0);
\draw []  (0, -1.2) -- (0, 1.2);
\draw [->,>=Stealth, dotted]  (0, 1.25) -- (0, 1.4);
\draw [dotted]  (0, -1.25) -- (0, -1.35);
\node[] at (1.02, 0.43) {\fontsize{8}{8} $k$};
\filldraw (0.95, 0.42) circle (0.75pt);
\draw[line width=0.6mm] (0, -0.55) -- (0, -0.2);
\draw[line width=0.6mm] (0, 0.2) -- (0, 0.55);
\draw[dashed, rotate around={-19:(0,0.55)}] (0, 0.55) -- (0.95,0.75);
\draw[dashed, rotate around={-39:(0,-0.55)}] (0, -0.55) -- (0.13,0.75);
\draw [domain=-90:-9, ->, >=stealth] plot ({0.1*cos(\x)}, {0.55+0.1*sin(\x)});
\node[] at (0.12, 0.43) {\fontsize{8}{8} $\phi_2$};
\draw [domain=-90:46, ->, >=stealth] plot ({0.1*cos(\x)}, {-0.55+0.1*sin(\x)});
\node[] at (0.165, -0.47) {\fontsize{8}{8} $\phi_1$};
\node[] at (-0.2, 0.55) {\fontsize{13}{13} $\frac{i}{\sqrt \delta}$};
\node[] at (-0.27, -0.55) {\fontsize{13}{13} $-\frac{i}{\sqrt \delta}$};
\filldraw (0,0.55) circle (0.75pt);
\filldraw (0,-0.55) circle (0.75pt);
\node[] at (-0.21, 0.2) {\fontsize{13}{13} $\frac{i}{\sqrt \alpha}$};
\node[] at (-0.27, -0.2) {\fontsize{13}{13} $-\frac{i}{\sqrt \alpha}$};
\filldraw (0,0.2) circle (0.75pt);
\filldraw (0,-0.2) circle (0.75pt);
\draw[dashed, rotate around={-18:(0,0.2)}] (0, 0.2) -- (0.85,0.71);
\draw[dashed, rotate around={-41.5:(0,-0.2)}] (0, -0.2) -- (0.3,0.9);
\draw [domain=-90:13, ->, >=stealth] plot ({0.1*cos(\x)}, {0.2+0.1*sin(\x)});
\node[] at (0.12, 0.1) {\fontsize{8}{8} $\theta_2$};
\draw [domain=-90:35, ->, >=stealth] plot ({0.1*cos(\x)}, {-0.2+0.1*sin(\x)});
\node[] at (0.16, -0.21) {\fontsize{8}{8} $\theta_1$};
\draw[line width=0.6mm] (0, -1.2) -- (0, -0.85);
\draw[line width=0.6mm] (0, 0.85) -- (0, 1.2);
\node[] at (-0.21, 0.85) {\fontsize{13}{13} $\frac{i}{\sqrt \beta}$};
\node[] at (-0.27, -0.85) {\fontsize{13}{13} $-\frac{i}{\sqrt \beta}$};
\filldraw (0,0.85) circle (0.75pt);
\filldraw (0,-0.85) circle (0.75pt);
\draw[dashed, rotate around={-18:(0,0.85)}] (0, 0.85) -- (0.9,0.75);
\draw[dashed, rotate around={-26.5:(0,-0.85)}] (0, -0.85) -- (0.3,0.7);
\draw [domain=90:-22, ->, >=stealth] plot ({0.1*cos(\x)}, {0.85+0.1*sin(\x)});
\node[] at (0.13, 0.95) {\fontsize{8}{8} $\varphi_2$};
\draw [domain=-90:49, ->, >=stealth] plot ({0.1*cos(\x)}, {-0.85+0.1*sin(\x)});
\node[] at (0.17, -0.85) {\fontsize{8}{8} $\varphi_1$};
\end{tikzpicture}
\end{minipage}
\hspace*{-0.85cm}
\begin{minipage}{0.5\textwidth}
\begin{tikzpicture}[scale=1.7, rotate=0]
\draw [->,>=Stealth] (-2.5, 0) -- (2.65, 0);
\draw [->,>=Stealth] (0,-0.8) -- (0,0.8);
\draw[line width=0.2mm, red] (-2,0.022) -- (-1.75,0.022);
\draw [domain=180:0, ->, >=stealth, red, line width=0.2mm] plot ({-1.5+0.25*cos(\x)}, {0.014+0.25*sin(\x)});
\draw[line width=0.2mm, red] (-1.25,0.022) -- (-1,0.022);
\draw [domain=180:0, ->, >=stealth, red, line width=0.2mm] plot ({-0.75+0.25*cos(\x)}, {0.014+0.25*sin(\x)});
\draw[line width=0.2mm, red] (-0.5,0.022) -- (-0.25,0.022);
\draw [domain=180:0, ->, >=stealth, red, line width=0.2mm] plot ({-0+0.25*cos(\x)}, {0.014+0.25*sin(\x)});
\draw[line width=0.2mm, red] (0.25,0.022) -- (0.5,0.022);
\draw [domain=180:0, ->, >=stealth, red, line width=0.2mm] plot ({0.75+0.25*cos(\x)}, {0.014+0.25*sin(\x)});
\draw[line width=0.2mm, red] (1,0.022) -- (1.25,0.022);
\draw [domain=180:0, ->, >=stealth, red, line width=0.2mm] plot ({1.5+0.25*cos(\x)}, {0.014+0.25*sin(\x)});
\draw[line width=0.2mm, red] (1.75,0.022) -- (2,0.022);
\node[] at (-2.3, 0.15) {$\textcolor{red}{\ldots}$};
\node[] at (2.35, 0.15) {$\textcolor{red}{\ldots}$};
\node[] at (-1.8, 0.43) {\fontsize{9}{9} $\textcolor{red}{\mathcal L^+}$};
\draw[line width=0.2mm, blue] (-2,-0.022) -- (-1.75,-0.022);
\draw [domain=180:360, ->, >=stealth, blue, line width=0.2mm] plot ({-1.5+0.25*cos(\x)}, {-0.014+0.25*sin(\x)});
\draw[line width=0.2mm, blue] (-1.25,-0.022) -- (-1,-0.022);
\draw [domain=180:360, ->, >=stealth, blue, line width=0.2mm] plot ({-0.75+0.25*cos(\x)}, {-0.014+0.25*sin(\x)});
\draw[line width=0.2mm, blue] (-0.5,-0.022) -- (-0.25,-0.022);
\draw [domain=180:360, ->, >=stealth, blue, line width=0.2mm] plot ({-0+0.25*cos(\x)}, {-0.014+0.25*sin(\x)});
\draw[line width=0.2mm, blue] (0.25,-0.022) -- (0.5,-0.022);
\draw [domain=180:360, ->, >=stealth, blue, line width=0.2mm] plot ({0.75+0.25*cos(\x)}, {-0.014+0.25*sin(\x)});
\draw[line width=0.2mm, blue] (1,-0.022) -- (1.25,-0.022);
\draw [domain=180:360, ->, >=stealth, blue, line width=0.2mm] plot ({1.5+0.25*cos(\x)}, {-0.014+0.25*sin(\x)});
\draw[line width=0.2mm, blue] (1.75,-0.022) -- (2,-0.022);
\node[] at (-2.3, -0.15) {$\textcolor{blue}{\ldots}$};
\node[] at (2.35, -0.15) {$\textcolor{blue}{\ldots}$};
\node[] at (-1.8, -0.43) {\fontsize{9}{9} $\textcolor{blue}{\mathcal L^-}$};
\filldraw (-1.5,0) circle (1.3pt);
\filldraw (-0.75,0) circle (1.3pt);
\filldraw (0,0) circle (1.3pt);
\filldraw (0.75,0) circle (1.3pt);
\filldraw (1.5,0) circle (1.3pt);
\end{tikzpicture}
\end{minipage}
%
\caption{\textit{Left:} The branch cut $\mathcal B$ given by \eqref{bcut-def} for the function $\omega$ defined in \eqref{eval-eq}. The local angles $\theta_1, \theta_2, \phi_1, \phi_2, \varphi_1, \varphi_2 \in [0, 2\pi]$ are used to make each of the square roots $\mu_\alpha$, $\mu_\delta$ and $\mu_\beta$ single-valued by taking branch cuts along $i\left[-1/\sqrt \alpha, 1/\sqrt \alpha\right]$,  $i\left[-1/\sqrt \delta, 1/\sqrt\delta\right]$ and $i\left(-\infty, -1/\sqrt\beta\right] \cup i\left[1/\sqrt \beta, \infty\right)$ respectively.
\textit{Right:} The contours $\mathcal L^\pm$ defined by \eqref{lpm-def}. The black dots correspond to the simple zeros $k_n = \frac{n\pi}{2L}$, $n\in\mathbb Z$, of the quantity $\Delta$ involved in Proposition~\ref{jordan-p}.
}
\label{branch-cut-f}
\end{figure}
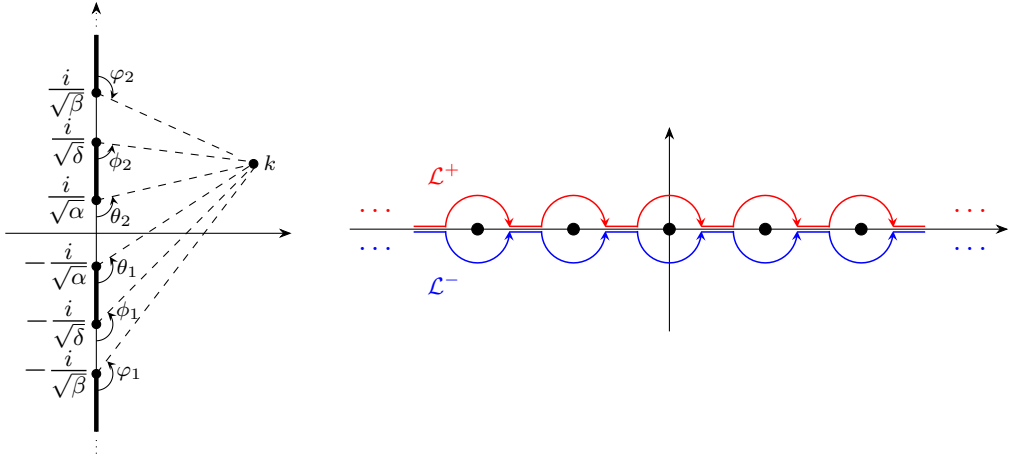

Using the diagonalization of $M$, we obtain 
$e^{-Mt} = P e^{-Dt} P^{-1}$. Then, 
introducing the notation
\begin{equation}\label{bpm-def}
B_f^\pm(\omega, t) 
:= 
e^{i\omega t}  \int_{\tau=0}^t e^{-i \omega \tau} f(\tau) d\tau 
\pm 
e^{-i\omega t} \int_{\tau=0}^t e^{i \omega \tau} f(\tau) d\tau\ ,
\end{equation}
we write the global relation \eqref{gr-v} in component form as follows:
\begin{subequations}\label{gr-comb}
\begin{align}\label{eta-gr}
\widehat  \eta(k, t)
&=
\frac 12 \left(e^{i\omega t} + e^{-i\omega t}\right) \widehat  \eta_0(k)
-
 \frac{\mu_\alpha \mu_\delta  }{2\mu_\beta }\left(e^{i\omega t}-e^{-i\omega t}\right) \widehat  u_0(k)
\nn\\
&
+
\frac{1+\alpha k^2}{2 \left(1+\beta k^2\right)}   \left[
-e^{-ikL} B_{h_0}^+(\omega, t)
+ e^{ikL} B_{g_0}^+(\omega, t) \right]
+
\frac{i\alpha k}{2 \left(1+\beta k^2\right)}  \left[
e^{-ikL} B_{h_1}^+(\omega, t)
- e^{ikL} B_{g_1}^+(\omega, t) \right]
\nn\\
&
+
\frac{\alpha}{2 \left(1+\beta k^2\right)}   \left[
e^{-ikL} B_{h_2}^+(\omega, t)
- e^{ikL} B_{g_2}^+(\omega, t)\right]
+
\frac{\beta \delta}{2\left(1+\beta k^2\right)} \left[
e^{-ikL} B_{h_2''}^+(\omega, t)
-
e^{ikL}B_{g_2''}^+(\omega, t)
\right]
\nn\\
&
+
\frac{\beta}{2\left(1+\beta k^2\right)} \left[
-e^{-ikL} B_{h_0''}^+(\omega, t)
+
e^{ikL} B_{g_0''}^+(\omega, t)
\right]
+
\frac{i \beta k}{2\left(1+\beta k^2\right)} \left[
e^{-ikL} B_{h_3'}^+(\omega, t)
-
e^{ikL} B_{g_3'}^+(\omega, t)
\right]
\nn\\
&
+  \frac{\mu_\alpha }{2\mu_\delta   \mu_\beta } 
\left[
e^{-ikL} B_{h_3}^-(\omega, t)
-
e^{ikL} B_{g_3}^-(\omega, t)
\right]
+  
\frac{\delta \mu_\alpha }{2 \mu_\delta   \mu_\beta }
\left[ 
-e^{-ikL} B_{h_1'}^-(\omega, t)
+
e^{ikL}
B_{g_1'}^-(\omega, t)
\right]
\nn\\
&
+ \frac{i\delta k \mu_\alpha }{2\mu_\delta   \mu_\beta } 
\left[ 
-e^{-ikL} B_{h_0'}^-(\omega, t)
+
e^{ikL} B_{g_0'}^-(\omega, t)
\right], \quad k \in \mathbb C \setminus \mathcal B\ , 
\end{align}
and
\begin{align}\label{u-gr}
\widehat  u(k, t)
&=
-\frac{\mu_\beta }{2 \mu_\alpha \mu_\delta  } \left(e^{i\omega t}-e^{-i\omega t}\right) \widehat  \eta_0(k)
+
\frac 12 \left(e^{i\omega t} + e^{-i\omega t}\right) \widehat  u_0(k)
\nn\\
&
+\frac{\mu_\alpha }{2\mu_\delta   \mu_\beta } 
\left[
e^{-ikL} B_{h_0}^-(\omega, t)
-
e^{ikL} B_{g_0}^-(\omega, t)
\right]
+
\frac{i\alpha k}{2 \mu_\alpha \mu_\delta   \mu_\beta } 
\left[
-e^{-ikL} B_{h_1}^-(\omega, t)
+
e^{ikL} B_{g_1}^-(\omega, t)
\right]
\nn\\
&
+\frac{\alpha}{2 \mu_\alpha \mu_\delta   \mu_\beta } 
\left[
-e^{-ikL} B_{h_2}^-(\omega, t)
+
e^{ikL} B_{g_2}^-(\omega, t)
\right]
+
\frac{\beta \delta}{2\mu_\alpha \mu_\delta  \mu_\beta } \left[
-e^{-ikL} B_{h_2''}^-(\omega, t)
+
e^{ikL} B_{g_2''}^-(\omega, t)
\right]
\nn\\
&
+
\frac{\beta}{2\mu_\alpha \mu_\delta  \mu_\beta } 
\left[
e^{-ikL} B_{h_0''}^-(\omega, t)
-
e^{ikL} B_{g_0''}^-(\omega, t)
\right]
+
\frac{i \beta k}{2\mu_\alpha \mu_\delta  \mu_\beta } \left[
-e^{-ikL} B_{h_3'}^-(\omega, t)
+
e^{ikL} B_{g_3'}^-(\omega, t)
\right]
\nn\\
&
+\frac{1}{2\left(1+\delta k^2\right)}
\left[
-e^{-ikL} B_{h_3}^+(\omega, t)
+
e^{ikL} B_{g_3}^+(\omega, t)
\right]
+
\frac{\delta}{2\left(1+\delta k^2\right)}
\left[
e^{-ikL} B_{h_1'}^+(\omega, t)
-
e^{ikL}B_{g_1'}^+(\omega, t)
\right]
\nn\\
&
+\frac{i\delta k}{2\left(1+\delta k^2\right)}
\left[
e^{-ikL} B_{h_0'}^+(\omega, t)
-
e^{ikL} B_{g_0'}^+(\omega, t)
\right],\quad k \in \mathbb C \setminus \mathcal B\ .
\end{align}
\end{subequations}
Inverting   \eqref{eta-gr} for $k\in \mathbb R$ by means of \eqref{ft-def} yields the following integral representation for $\eta$:
\begin{subequations}\label{ir-v}
\begin{align}\label{eta-ir}
2\pi \eta(x, t)
&=
 \int_{k\in\mathbb R} e^{ikx} \, \frac 12 \left[\left(e^{i\omega t} + e^{-i\omega t}\right) \widehat  \eta_0(k)  
-
 \frac{\mu_\alpha \mu_\delta  }{\mu_\beta }
  \left(e^{i\omega t}-e^{-i\omega t}\right) \widehat  u_0(k) \right] dk
\nn\\
&
+
 \int_{k\in\mathbb R} e^{ik(x-L)} \,
\bigg[
- \frac{1+\alpha k^2}{2 \left(1+\beta k^2\right)}  \, B_{h_0}^+(\omega, t) 
+
\frac{\alpha}{2 \left(1+\beta k^2\right)} \, B_{h_2}^+(\omega, t) 
\nn\\
&\quad
+
\frac{\beta \delta}{2 \left(1+\beta k^2\right)}  \, B_{h_2''}^+(\omega, t)
-
\frac{\beta}{2 \left(1+\beta k^2\right)} \, B_{h_0''}^+(\omega, t)
-
\frac{i\delta k\mu_\alpha }{2\mu_\delta   \mu_\beta }
\, B_{h_0'}^-(\omega, t)
\bigg]
dk
\nn\\
&
-
 \int_{k\in\mathbb R} e^{ik(x+L)} \,
\bigg[
-\frac{1+\alpha k^2}{2 \left(1+\beta k^2\right)} \, B_{g_0}^+(\omega, t) +
\frac{\alpha}{2 \left(1+\beta k^2\right)} \, B_{g_2}^+(\omega, t)
\nn\\
&\quad
+
\frac{\beta \delta}{2 \left(1+\beta k^2\right)} \, B_{g_2''}^+(\omega, t) 
-
\frac{\beta}{2 \left(1+\beta k^2\right)} \, B_{g_0''}^+(\omega, t)
-
 \frac{i\delta k\mu_\alpha }{2\mu_\delta   \mu_\beta }\, B_{g_0'}^-(\omega, t)
\bigg] dk
\nn\\
&
+
 \int_{k\in\mathbb R} e^{ik(x-L)} \,
 \bigg[
\frac{i \alpha k}{2 \left(1+\beta k^2\right)} \, B_{h_1}^+(\omega, t)
+
\frac{i\beta k}{2 \left(1+\beta k^2\right)} \, B_{h_3'}^+(\omega, t)
\nn\\
&\hskip 2.9cm
+
\frac{\mu_\alpha }{2\mu_\delta   \mu_\beta } \, B_{h_3}^-(\omega, t)
-
 \frac{\delta \mu_\alpha }{2\mu_\delta   \mu_\beta } 
\, B_{h_1'}^-(\omega, t)
\bigg]
 dk
\nn\\
&
-
 \int_{k\in\mathbb R} e^{ik(x+L)}  \,
\bigg[
\frac{i \alpha k}{2 \left(1+\beta k^2\right)} \, B_{g_1}^+(\omega, t)
+
\frac{i\beta k}{2 \left(1+\beta k^2\right)} \, B_{g_3'}^+(\omega, t) 
\nn\\
&\hskip 2.9cm
+
\frac{\mu_\alpha }{2\mu_\delta   \mu_\beta } \, B_{g_3}^-(\omega, t)
-
\frac{\delta \mu_\alpha }{2\mu_\delta   \mu_\beta }   \, B_{g_1'}^-(\omega, t)
\bigg] dk\ .
\end{align}
Similarly, inverting \eqref{u-gr} for $k\in\mathbb R$, we obtain the following integral representation for $u$:
\begin{align}\label{u-ir}
2\pi  u(x, t)
&=
 \int_{k\in\mathbb R} e^{ikx} \, \frac 12 \left[
-\frac{\mu_\beta }{\mu_\alpha \mu_\delta  } \left(e^{i\omega t}-e^{-i\omega t}\right) \widehat  \eta_0(k)
+
 \left(e^{i\omega t} + e^{-i\omega t}\right) \widehat  u_0(k)
 \right] dk
\nn\\
&
+
 \int_{k\in\mathbb R} e^{ik(x-L)} \,
 \bigg[
\frac{\mu_\alpha }{2\mu_\delta   \mu_\beta } 
\, B_{h_0}^-(\omega, t)   
-
\frac{\alpha}{2 \mu_\alpha \mu_\delta   \mu_\beta } 
\, B_{h_2}^-(\omega, t) 
\nn\\
&\quad
-
\frac{\beta \delta}{2 \mu_\alpha \mu_\delta   \mu_\beta } 
\, B_{h_2''}^-(\omega, t)
+
\frac{\beta}{2 \mu_\alpha \mu_\delta   \mu_\beta } 
\, B_{h_0''}^-(\omega, t)  
+
\frac{i\delta k}{2\left(1+\delta k^2\right)}
\, B_{h_0'}^+(\omega, t) 
\bigg] dk
\nn\\
&-
 \int_{k\in\mathbb R} e^{ik(x+L)} \, 
 \bigg[
 \frac{\mu_\alpha }{2\mu_\delta   \mu_\beta } \, B_{g_0}^-(\omega, t)  -
 \frac{\alpha}{2 \mu_\alpha \mu_\delta   \mu_\beta } \, B_{g_2}^-(\omega, t) 
\nn\\
&\quad
-
\frac{\beta \delta}{2 \mu_\alpha \mu_\delta   \mu_\beta } \, B_{g_2''}^-(\omega, t)  
+
\frac{\beta}{2 \mu_\alpha \mu_\delta   \mu_\beta } \, B_{g_0''}^-(\omega, t)  
+
\frac{i\delta k}{2\left(1+\delta k^2\right)}
\, B_{g_0'}^+(\omega, t)
\bigg]  dk
\nn\\
&
+
 \int_{k\in\mathbb R} e^{ik(x-L)} \, 
 \bigg[
-\frac{i\alpha k}{2 \mu_\alpha \mu_\delta   \mu_\beta } 
\, B_{h_1}^-(\omega, t)  
-
\frac{i \beta k}{2 \mu_\alpha \mu_\delta   \mu_\beta } 
\, B_{h_3'}^-(\omega, t) 
\nn\\
&\hskip 2.9cm
-
\frac{1}{2\left(1+\delta k^2\right)}
\, B_{h_3}^+(\omega, t)
+
\frac{\delta}{2\left(1+\delta k^2\right)}
\, B_{h_1'}^+(\omega, t)
\bigg] dk
\nn\\
&
-
 \int_{k\in\mathbb R} e^{ik(x+L)} \,
 \bigg[
-\frac{i\alpha k}{2 \mu_\alpha \mu_\delta   \mu_\beta } \, B_{g_1}^-(\omega, t) 
-
\frac{i \beta k}{2 \mu_\alpha \mu_\delta   \mu_\beta } \, B_{g_3'}^-(\omega, t) 
\nn\\
&\hskip 2.9cm
-
\frac{1}{2\left(1+\delta k^2\right)}
\, B_{g_3}^+(\omega, t) 
+
\frac{\delta}{2\left(1+\delta k^2\right)} 
\, B_{g_1'}^+(\omega, t)  
\bigg] dk\ .
\end{align}
\end{subequations}

\subsection{Elimination of boundary values}\label{ebv-ss}
The integral representations \eqref{ir-v} involve eight boundary values, four at each endpoint of $[-L, L]$, through the transforms $B_{g_j}^\pm$ and $B_{h_j}^\pm$, $j=0,1,2,3$. However, our analysis allows us to eliminate four of these boundary values, two from each endpoint. 

The key idea lies in the observation that $\omega(-k) = -\omega(k)$, since the transformation $k\mapsto -k$ flips the sign of  $\mu_\alpha$ and  $\mu_\delta$   but leaves $\mu_{\beta}$  invariant (see local angles in Figure~\ref{branch-cut-f}).
Then, recalling definition \eqref{bpm-def}, we deduce  
$B_f^+(-\omega, t) = B_f^+(\omega, t)$
and
$B_f^-(-\omega, t) = -B_f^-(\omega, t)$.
Therefore, applying the transformation $k\mapsto -k$ to the global relations \eqref{gr-comb} generates \textit{two additional identities} which are also valid in $\mathbb C \setminus \mathcal B$. 
These new identities together with \eqref{gr-comb} can be combined to yield expressions for the quantities 
$\widehat \eta(k, t) + e^{\pm 2ikL} \, \widehat \eta(-k, t)$ and $\widehat u(k, t) - e^{\pm 2ikL} \, \widehat u(-k, t)$, respectively. 

In particular, the expression emerging for the quantity $\widehat \eta(k, t) + e^{-2ikL} \, \widehat \eta(-k, t)$ can be used in order to eliminate from the integral representation \eqref{eta-ir} for $\eta$ the combination of terms that involve the boundary values $g_1$ and $g_3$ (namely, the last integral in \eqref{eta-ir}) in favor of a combination that involves the boundary values $g_0, h_0, g_2, h_2$ (along with other, non-boundary-value terms). 
In addition, the expression for the quantity $\widehat \eta(k, t) + e^{2ikL} \, \widehat \eta(-k, t)$ can be used  for the elimination from \eqref{eta-ir} of the combination of terms that involve the boundary values $h_1$ and $h_3$ (i.e. the penultimate integral in \eqref{eta-ir}) in favor of a combination that involves once again the boundary values $g_0, h_0, g_2, h_2$ (along with other, non-boundary-value terms).
Similarly, the expression for  the quantity $\widehat u(k, t) - e^{-2ikL} \, \widehat u(-k, t)$ can be used  in order to eliminate from the integral representation \eqref{u-ir} for $u$ the combination of terms that involve the boundary values $g_1$ and $g_3$ (namely, the last integral in \eqref{u-ir}) in favor of a combination that involves  the boundary values $g_0, h_0, g_2, h_2$ (along with other, non-boundary-value terms), 
while the expression for the quantity $\widehat u(k, t) - e^{2ikL} \, \widehat u(-k, t)$ can be used for the elimination from \eqref{u-ir} of the combination of terms in \eqref{ir-v} that involve the boundary values $h_1$ and $h_3$ (i.e. the penultimate integral in \eqref{u-ir}) in favor of a combination that involves the boundary values $g_0, h_0, g_2, h_2$ (along with other, non-boundary-value terms).

\begin{remark}[Admissible boundary conditions for the Nwogu system]
\label{adm-bc-r}
The manipulations described above can be performed in order to eliminate the terms in \eqref{ir-v} that involve the boundary values $g_0, h_0, g_2, h_2$ in favor of those that involve $g_1, h_1, g_3, h_3$. Simply, instead of combining the global relations \eqref{gr-v} and their $k\mapsto -k$ counterparts towards obtaining expressions for the quantities $\widehat \eta(k, t) + e^{\pm 2ikL} \, \widehat \eta(-k, t)$ and $\widehat u(k, t) - e^{\pm 2ikL} \, \widehat u(-k, t)$, we  form the combinations  $\widehat \eta(k, t) - e^{\pm 2ikL} \, \widehat \eta(-k, t)$ and $\widehat u(k, t) + e^{\pm 2ikL} \, \widehat u(-k, t)$ which can then be employed for the elimination of the second and third integrals in each of the integral representations \eqref{ir-v} in favor of the boundary values $g_1, h_1, g_3, h_3$ (along with other, non-boundary-value terms).
Therefore, we overall conclude that the two combinations of boundary conditions given in \eqref{adm-bc-i}, namely either $\left\{u(\pm L, t), u_{xx}(\pm L, t)\right\}$ or $\left\{\eta(\pm L, t), u_x(\pm L, t)\right\}$,  both  lead to a well-formulated initial-boundary value problem for the linear generalized Nwogu system \eqref{lrn-i} and hence for the nonlinear generalized Nwogu system \eqref{lng-ibvp-i}, including the regularized system \eqref{eq:Nwogu3nd} and, importantly,  the original Nwogu system.
Furthermore, linear combinations of the aforementioned pairs of boundary values in the form of Robin boundary conditions could also be prescribed.
\end{remark}

If the expressions for the unknown boundary-value quantities  obtained via the calculations described above are introduced directly in the integral representations \eqref{ir-v}, then these representations will degenerate to tautologies.
For this reason, before performing the above calculations we employ Cauchy's integral theorem to \textit{deform} the contours of integration of the boundary-value terms in~\eqref{ir-v} to the contours $\mathcal L^+$ (for the terms involving $g_j$'s) and  $\mathcal L^-$ (for the terms involving $h_j$'s), which are depicted in Figure \ref{branch-cut-f} and are defined by 
\begin{equation}\label{lpm-def}
\mathcal L^+ = 
\widetilde{\mathbb R}  \cup \bigcup_{n\in\mathbb Z} - C_{\frac{\pi}{6L}, [0, \pi]}(k_n),
\quad
\mathcal L^- = 
\widetilde{\mathbb R}  \cup \bigcup_{n\in\mathbb Z} C_{\frac{\pi}{6L}, [\pi, 2\pi]}(k_n)\ ,
\end{equation}
where for $k_n = \frac{n\pi}{2L}$, $n \in \mathbb Z$, we define
$$
\widetilde{\mathbb R} = \bigcup_{n\in\mathbb Z} \left[ k_n + \frac{\pi}{6L}, k_{n+1} - \frac{\pi}{6L}\right],
\quad
C_{r, [a, b]}(k_n) = \Big\{ \left|k-k_n\right| = r, \ a \leq \arg(k) \leq b \Big\}\ .
$$
We emphasize that the deformations from $\mathbb R$ to $\mathcal L^\pm$ are allowed thanks to the analyticity of the relevant integrands in \eqref{ir-v} and  the exponential decay of the terms $e^{ik(x+L)}$ and $e^{ik(x-L)}$ in the upper and lower half of the complex $k$-plane, respectively. These deformations allow us to handle the unknown terms $\widehat \eta(\pm k, t)$ and $\widehat u(\pm k, t)$ that arise after the calculations described above via the following result.
\begin{proposition}\label{jordan-p}
Let $\Delta(k) = e^{ikL} - e^{-3ikL}$ and suppose  $f \in H^1(-L, L)$. Then, for all $x \in (-L, L)$,
%
\begin{align*}
&\int_{k\in \mathcal L^+} \frac{e^{ik(x+L)}}{\Delta(k)} \,  \widehat f(k) dk
=
0
=
\int_{k\in \mathcal L^+} \frac{e^{ik(x+L)}}{\Delta(k)} \,  e^{2ikL} \, \widehat f(-k) dk\ ,
\\
&\int_{k\in \mathcal L^-} \frac{e^{ik(x-L)}}{\overline{\Delta(\bar k)}} \,   \widehat f(k)   dk
=
0
=
\int_{k\in \mathcal L^-} \frac{e^{ik(x-L)}}{\overline{\Delta(\bar k)}} \,  e^{-2ikL} \, \widehat f(-k)   dk\ .
\end{align*}
%
\end{proposition}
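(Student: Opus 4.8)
The plan is to prove each of the four identities by closing the contour into the half-plane where the corresponding exponential prefactor decays, and then invoking Cauchy's theorem together with Jordan's lemma. First I would record the factorization $\Delta(k) = e^{-3ikL}\left(e^{4ikL}-1\right)$, so that $1/\Delta(k) = e^{3ikL}/\left(e^{4ikL}-1\right)$ is meromorphic with only simple poles, located precisely at the points $k_n = n\pi/(2L)$, $n\in\mathbb{Z}$, all lying on the real axis; the same holds for $1/\overline{\Delta(\bar k)}$ since $\overline{\Delta(\bar k)} = e^{-ikL}-e^{3ikL} = e^{3ikL}\left(e^{-4ikL}-1\right)$ has the identical zero set. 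Because $\widehat f$ is entire by the Paley--Wiener theorem (and hence so is $k\mapsto\widehat f(-k)$), each of the four integrands is analytic on $\mathbb{C}$ except for these simple poles on $\mathbb{R}$. The contours are designed precisely so that $\mathcal{L}^+\subset\overline{\mathbb{C}^+}$ passes \emph{above} every $k_n$ by a semicircular detour while $\mathcal{L}^-\subset\overline{\mathbb{C}^-}$ passes \emph{below} them; consequently, when $\mathcal{L}^+$ is closed by a large semicircle in $\mathbb{C}^+$ the enclosed region contains no $k_n$, and likewise for $\mathcal{L}^-$ in $\mathbb{C}^-$.

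The analytic core is then immediate: with no poles enclosed, Cauchy's theorem forces each closed-contour integral to vanish, so every identity reduces to showing that the contribution of the closing semicircle $C_R$ tends to zero as $R\to\infty$ (along a sequence $R=R_N$ chosen to avoid the $k_n$). For this I would establish two decay estimates. From the factorization one reads off $|1/\Delta(k)|\lesssim e^{-3L\,\mathrm{Im}\,k}$ on $\mathbb{C}^+$ and $|1/\overline{\Delta(\bar k)}|\lesssim e^{3L\,\mathrm{Im}\,k}$ on $\mathbb{C}^-$, the decisive feature being the $e^{\mp 3ikL}$ factor that dominates in each half-plane. Integrating $\widehat f(k)=\int_{-L}^{L}e^{-iky}f(y)\,dy$ by parts once and using the embedding $H^1\hookrightarrow C^0$ to control the boundary values $f(\pm L)$ yields $|\widehat f(\pm k)|\lesssim \|f\|_1\, e^{L|\mathrm{Im}\,k|}/|k|$ in the corresponding half-plane; the extra factor $1/|k|$ supplied by the $H^1$ regularity is exactly what renders Jordan's lemma applicable on the arc.

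Combining these with the elementary estimates $|e^{ik(x\pm L)}|=e^{-(x\pm L)\mathrm{Im}\,k}$ and $|e^{\pm 2ikL}|=e^{\mp 2L\,\mathrm{Im}\,k}$, I would check that on each arc the integrand is dominated by $e^{-c\,|\mathrm{Im}\,k|}/|k|$ with a strictly positive rate $c$: one finds $c=x+3L$ and $c=x+5L$ for the two $\mathcal{L}^+$ integrals, and $c=3L-x$ and $c=5L-x$ for the two $\mathcal{L}^-$ integrals. Since $x\in(-L,L)$, every one of these rates is positive, so Jordan's lemma gives $\int_{C_R}\to 0$ and therefore $\int_{\mathcal{L}^\pm}=0$ in each of the four cases.

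The step I expect to be the main obstacle is the uniform arc estimate, for two intertwined reasons. First, one must extract the polynomial factor $1/|k|$ from $f\in H^1$ while simultaneously tracking the genuine exponential growth $e^{L|\mathrm{Im}\,k|}$ of $\widehat f(\pm k)$ off the real axis---these two effects must be balanced carefully against the decay of $1/\Delta$ and of the exponential prefactors, and one must also ensure $|e^{4ikL}-1|$ stays bounded below on the chosen arcs $C_{R_N}$. Second, and more conceptually, the positivity of the net rate $c$ for \emph{all} $x\in(-L,L)$ hinges entirely on the $e^{\mp 3ikL}$ term in $\Delta$, which contributes the $\pm 3L$ tipping each rate above zero; were $\Delta$ of a different form this balance could fail, so the proof must make transparent that it is precisely the structure $\Delta(k)=e^{ikL}-e^{-3ikL}$ (adapted to the contours $\mathcal{L}^\pm$) that makes all four integrals analytically trivial.
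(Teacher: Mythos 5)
Your proposal is correct and follows essentially the same route as the paper's own proof: both rest on the entirety of $\widehat f$, Cauchy's theorem to trade $\mathcal L^\pm$ for a large arc enclosing no zeros of $\Delta$, one integration by parts (using $f\in H^1$ and the trace values $f(\pm L)$) to gain the factor $1/|k|$, the uniform lower bound on $\left|e^{4ikL}-1\right|$ off the disks around $k_n=\tfrac{n\pi}{2L}$, and a Jordan-type estimate showing the arc contribution vanishes, with exactly the decay rates $x+3L$, $x+5L$, $3L-x$, $5L-x$ that the paper's computation produces. The only cosmetic differences are that you avoid the poles by choosing radii $R_N$ between them while the paper adjusts the arc's starting angle $\theta_0$, and that you tabulate all four integrals explicitly while the paper treats the first and declares the rest analogous.
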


\begin{proof}
We only give the proof for the first of the above integrals as the remaining ones can be handled in an entirely analogous way.
Since $\widehat f(k)$ is entire and $\Delta(k) \neq 0$ away from $\mathbb R$, employing Cauchy's theorem we can write
\begin{equation*}
\int_{k\in \mathcal L^+} \frac{e^{ik(x+L)}}{\Delta(k)} \,  \widehat f(k) dk
=
\lim_{R\to \infty}
\int_{k\in C_{R, \theta_0}} \frac{e^{ik(x+L)}}{\Delta(k)} \,  \widehat f(k) dk\ ,
\end{equation*}
where for the circular arc
$
C_{R, \theta_0}
=
\left\{R e^{i\theta}, \ \theta_0 \leq \theta \leq \pi - \theta_0 \right\}
$
we take $\theta_0 = 0$ if $\left|R-k_n\right|\geq \frac{\pi}{6L}$ for all $n\in\mathbb Z$
and $0 < \theta_0 \leq \sin^{-1}\left(\frac{\pi}{6LR}\right)$ if there exists $n\in \mathbb Z$ such that $\left|R-k_n\right| < \frac{\pi}{6L}$.
Thus, integrating by parts and substituting for $\Delta(k)$, we have
$$
\int_{k\in C_{R, \theta_0}} \frac{e^{ik(x+L)}}{\Delta(k)} \,  \widehat f(k) dk
=
\int_{k\in C_{R, \theta_0}} \frac{e^{ik(x+4L)}}{e^{4ikL}-1}
\bigg\{
\left[ e^{ikL} f(-L) - e^{-ikL} f(L) \right] 
+  \int_{y=-L}^L e^{-iky} f'(y) dy
\bigg\}
\frac{1}{ik} \, dk\ .
$$
Next, we use the elementary lower bound (see e.g.  \cite{wm2021} for a proof)
$$
\left|e^{4ikL}-1\right| \geq 1-e^{-\frac{2\pi}{3}}, \quad 
k\in \left\{\textnormal{Im}(k)\geq0\right\}\setminus \bigcup_{n\in\mathbb Z} D_{\frac{\pi}{6L}}(k_n).
$$
Since $C_{R, \theta_0} \subset \left\{\textnormal{Im}(k)\geq0\right\}\setminus \bigcup_{n\in\mathbb Z} D_{\frac{\pi}{6L}}(k_n)$, employing  the above bound we infer
\begin{align*}
\left|
\int_{k\in C_{R, \theta_0}} \frac{e^{ik(x+L)}}{\Delta(k)} \,  \widehat f(k) dk
\right|
&\leq
2\left|f(-L)\right|
\int_{\theta=\theta_0}^{\frac \pi 2}
 \frac{e^{-R(x+5L)\sin\theta}}{1-e^{-\frac{2\pi}{3}}}
\,
 d\theta
+
2\left|f(L)\right|
\int_{\theta=\theta_0}^{\frac \pi 2}
 \frac{e^{-R(x+3L)\sin\theta}}{1-e^{-\frac{2\pi}{3}}}
\,    d\theta
\nn\\
&\quad
+
2\int_{\theta=\theta_0}^{\frac \pi 2}
 \frac{e^{-R(x+4L)\sin\theta}}{1-e^{-\frac{2\pi}{3}}}
\int_{y=-L}^L e^{Ry\sin\theta} \left|f'(y)\right| dy
\,
d\theta
=:
I_1 + I_2 + I_3\ .
\end{align*}
Using the well-known inequality $\sin\theta \geq 2\theta/\pi$, $0\leq \theta \leq  \pi/2$, we have
$$
I_1 
\leq
\frac{2\left|f(-L)\right|}{1-e^{-\frac{2\pi}{3}}}
\int_{\theta=\theta_0}^{\frac \pi 2}
e^{-\frac{2R(x+5L)}{\pi} \, \theta}
\,
 d\theta
=
\frac{\pi\left|f(-L)\right|}{\big(1-e^{-\frac{2\pi}{3}}\big)R\left(x+5L\right)} 
\left[ e^{-\frac{2R(x+5L)}{\pi} \, \theta_0} - e^{-R(x+5L)} \right]
\overset{R\to\infty}{\xrightarrow{\hspace*{7mm}}} 0\ ,
$$
and, similarly, 
$$
I_2
\leq
\frac{2\left|f(L)\right|}{1-e^{-\frac{2\pi}{3}}}
\int_{\theta=\theta_0}^{\frac \pi 2}
e^{-\frac{2R(x+3L)}{\pi} \, \theta}
\,
 d\theta
=
\frac{\pi\left|f(L)\right|}{\big(1-e^{-\frac{2\pi}{3}}\big)R\left(x+3L\right)} 
\left[ e^{-\frac{2R(x+3L)}{\pi} \, \theta_0} - e^{-R(x+3L)} \right]
\overset{R\to\infty}{\xrightarrow{\hspace*{7mm}}} 0\ .
$$
Furthermore, using in addition Fubini's theorem, we find
\begin{align*}
I_3
&\leq
\frac{2}{1-e^{-\frac{2\pi}{3}}} \left\| f' \right\|_{L^1(-L, L)}
\int_{\theta=\theta_0}^{\frac \pi 2}
e^{-\frac{2R(x+3L)}{\pi} \,  \theta} 
 d\theta  
\nn\\
&=
\frac{\pi \left\| f' \right\|_{L^1(-L, L)}}{\big(1-e^{-\frac{2\pi}{3}}\big)R\left(x+3L\right)} 
\left[ e^{-\frac{2R(x+3L)}{\pi} \, \theta_0} - e^{-R(x+3L)} \right]
\overset{R\to\infty}{\xrightarrow{\hspace*{7mm}}} 0\ .
\end{align*}
Hence, overall we conclude that 
$$
\lim_{R\to\infty} 
\int_{k\in \mathcal L^+} \frac{e^{ik(x+L)}}{\Delta(k)} \,  \widehat f(k) dk
=
\lim_{R\to\infty} 
\int_{k\in C_{R, \theta_0}} \frac{e^{ik(x+L)}}{\Delta(k)} \,  \widehat f(k) dk
 = 0, \quad x\in (-L, L)\ ,
$$
as desired, completing the proof of Proposition \ref{jordan-p}.
\end{proof}

\subsection{Solution formulas via the unified transform}
Altogether, performing the calculations described at the beginning of Subsection \ref{ebv-ss} and then using  Proposition \ref{jordan-p}, we obtain 
\begin{subequations}\label{sols-comb}
\begin{align}\label{eta-sol}
\eta(x, t)
&=
\frac{1}{2\pi} \int_{k\in\mathbb R} e^{ikx} \, \frac 12 \left[\left(e^{i\omega t} + e^{-i\omega t}\right) \widehat  \eta_0(k)  
-
 \frac{\mu_\alpha \mu_\delta  }{\mu_\beta }
  \left(e^{i\omega t}-e^{-i\omega t}\right) \widehat  u_0(k) \right] dk
\nn\\
&
+
\frac{1}{2\pi} \int_{k\in\mathcal L^-} 
\frac{e^{ik(x-L)} }{2\left(1-e^{-4ikL}\right)} 
\, \bigg\{ 
\left(e^{i\omega t} + e^{-i\omega t}\right) \left[e^{-3ikL} \, \widehat  \eta_0(k) + e^{-ikL} \, \widehat  \eta_0(-k) \right]
\nn\\
&\quad
-
\frac{\mu_\alpha \mu_\delta  }{\mu_\beta }  \left(e^{i\omega t}-e^{-i\omega t}\right) \left[ e^{-3ikL} \, \widehat  u_0(k) - e^{-ikL} \, \widehat  u_0(-k) \right] \bigg\} \, dk
\nn\\
&
+
\frac{1}{2\pi} \int_{k\in\mathcal L^+}  
\frac{e^{ik(x+L)} }{2\left(1-e^{4ikL}\right)} \, \bigg\{ 
\left(e^{i\omega t} + e^{-i\omega t}\right) \left[e^{3ikL} \, \widehat  \eta_0(k) + e^{ikL} \, \widehat  \eta_0(-k) \right]
\nn\\
&\quad
-
\frac{\mu_\alpha \mu_\delta  }{\mu_\beta }  \left(e^{i\omega t}-e^{-i\omega t}\right) \left[ e^{3ikL} \, \widehat  u_0(k) - e^{ikL} \, \widehat  u_0(-k)\right]
  \bigg\} \, dk
\nn\\
&
+
\frac{1}{2\pi} \int_{k\in\mathcal L^-} \frac{e^{ik(x-L)}}{1-e^{-4ikL}} \,
\bigg[
-\frac{1+\alpha k^2}{1+\beta k^2} \,  B_{h_0}^+(\omega, t) 
+\frac{\alpha}{1+\beta k^2} \, B_{h_2}^+(\omega, t)
\nn\\
&\quad
+\frac{\beta \delta}{1+\beta k^2} \,  B_{h_2''}^+(\omega, t) 
-\frac{\beta}{1+\beta k^2} \,  B_{h_0''}^+(\omega, t) 
-\frac{i\delta k\mu_\alpha }{\mu_\delta   \mu_\beta } \, B_{h_0'}^-(\omega, t) 
\bigg] dk
\nn\\
&
+
\frac{1}{2\pi} \int_{k\in\mathcal L^+} \frac{e^{ik(x+L)}}{1-e^{4ikL}}  \, 
\bigg[
\frac{1+\alpha k^2}{1+\beta k^2} \,  B_{g_0}^+(\omega, t) 
-
\frac{\alpha}{1+\beta k^2} \,  B_{g_2}^+(\omega, t)  
\nn\\
&\quad
- 
\frac{\beta \delta}{1+\beta k^2} \,  B_{g_2''}^+(\omega, t)   
+
\frac{\beta}{1+\beta k^2} \,  B_{g_0''}^+(\omega, t)   
+
 \frac{i\delta k\mu_\alpha }{\mu_\delta   \mu_\beta } \,  B_{g_0'}^-(\omega, t)  
\bigg]  dk
\nn\\
&
+
\frac{1}{2\pi} \int_{k\in\mathcal L^-}  
\frac{e^{ik(x+L)}}{1 - e^{4ikL}} \, \bigg[ 
-\frac{1+\alpha k^2}{1+\beta k^2}
\,  B_{g_0}^+(\omega, t) 
-
 \frac{i\delta k \mu_\alpha }{\mu_\delta   \mu_\beta }
 \, B_{g_0'}^-(\omega, t) 
\nn\\
&\quad
-
\frac{\beta}{1+\beta k^2} \, B_{g_0''}^+(\omega, t) 
+
\frac{\alpha}{1+\beta k^2} \,  B_{g_2}^+(\omega, t)  
+
\frac{\beta \delta}{1+\beta k^2} \, B_{g_2''}^+(\omega, t)  
\bigg]
\, dk
\nn\\
&
+
\frac{1}{2\pi} \int_{k\in\mathcal L^+}   
\frac{e^{ik(x-L)}}{1 - e^{-4ikL}} \, \bigg[ 
\frac{1+\alpha k^2}{1+\beta k^2} \,  B_{h_0}^+(\omega, t)
+
 \frac{i\delta k \mu_\alpha }{\mu_\delta   \mu_\beta }
 B_{h_0'}^-(\omega, t)
\nn\\
&\quad
+
\frac{\beta}{1+\beta k^2}   B_{h_0''}^+(\omega, t)
-
\frac{\alpha}{1+\beta k^2} \, B_{h_2}^+(\omega, t)
- \frac{\beta \delta}{1+\beta k^2} 
 B_{h_2''}^+(\omega, t)
\bigg]
\, dk\ ,
\end{align}
which is an \textit{explicit solution formula} for the  $\eta$-component of the linear  generalized Nwogu system \eqref{lng-ibvp} in the case of an initial-boundary value problem with boundary conditions $u(-L, t) = g_0(t)$, $u(L, t) = h_0(t)$, $u_{xx}(-L, t) = g_2(t)$ and $u_{xx}(L, t) = h_2(t)$. 

For the same boundary conditions, we similarly obtain the following \textit{explicit solution formula} for the  $u$-component of the linear  generalized Nwogu system \eqref{lng-ibvp}:
\begin{align}\label{u-sol}
u(x, t)
&=
\frac{1}{2\pi} \int_{k\in\mathbb R} e^{ikx} \, \frac 12 \left[
-\frac{\mu_\beta }{\mu_\alpha \mu_\delta  } \left(e^{i\omega t}-e^{-i\omega t}\right) \widehat  \eta_0(k)
+
 \left(e^{i\omega t} + e^{-i\omega t}\right) \widehat  u_0(k)
 \right] dk
\nn\\
&
+
\frac{1}{2\pi} \int_{k\in \mathcal L^-}  
\,
\frac{e^{ik(x-L)}}{2\left(1-e^{-4ikL}\right)} 
\,
\bigg\{
-\frac{\mu_\beta }{\mu_\alpha \mu_\delta  } \left(e^{i\omega t}-e^{-i\omega t}\right) \big[ e^{-3ikL} \, \widehat  \eta_0(k) 
\nn\\
&\quad
+ e^{-ikL} \, \widehat  \eta_0(-k)\big]
+
\left(e^{i\omega t} + e^{-i\omega t}\right) \left[ e^{-3ikL} \, \widehat  u_0(k) - e^{-ikL} \, \widehat  u_0(-k)\right]
\bigg\} \, dk
\nn\\
&
+
\frac{1}{2\pi} \int_{k\in \mathcal L^+} \frac{e^{ik(x+L)}}{2\left(1-e^{4ikL}\right)}
\,
\bigg\{
- \frac{\mu_\beta }{\mu_\alpha \mu_\delta  } \left(e^{i\omega t}-e^{-i\omega t}\right) \big[ e^{3ikL}\, \widehat  \eta_0(k) 
\nn\\
&\quad
+ e^{ikL} \, \widehat  \eta_0(-k)\big]
+
\left(e^{i\omega t} + e^{-i\omega t}\right) \left[ e^{3ikL}\,  \widehat  u_0(k) - e^{ikL} \, \widehat  u_0(-k)\right]
\bigg\} \, dk
\nn\\
&
+
\frac{1}{2\pi} \int_{k\in\mathcal L^-} \frac{e^{ik(x-L)}}{1-e^{-4ikL}} \, 
\bigg[
\frac{\mu_\alpha }{\mu_\delta   \mu_\beta } 
\, B_{h_0}^-(\omega, t)  
+
\frac{i\delta k}{1+\delta k^2}
\, B_{h_0'}^+(\omega, t)
\nn\\
&\quad
+
\frac{\beta}{\mu_\alpha \mu_\delta   \mu_\beta } 
\, B_{h_0''}^-(\omega, t)
-
\frac{\alpha}{\mu_\alpha \mu_\delta   \mu_\beta } 
\, B_{h_2}^-(\omega, t)
-
\frac{\beta \delta}{\mu_\alpha \mu_\delta   \mu_\beta } 
\, B_{h_2''}^-(\omega, t) 
\bigg] dk
\nn\\
&
+
\frac{1}{2\pi} \int_{k\in\mathcal L^+} \frac{e^{ik(x+L)}}{1-e^{4ikL}} \, 
\bigg[
-\frac{\mu_\alpha }{2\mu_\delta   \mu_\beta } \, B_{g_0}^-(\omega, t)  
-
\frac{i\delta k}{1+\delta k^2} \, B_{g_0'}^+(\omega, t)  
\nn\\
&\quad
-
\frac{\beta}{\mu_\alpha \mu_\delta   \mu_\beta } \, B_{g_0''}^-(\omega, t)  
+
\frac{\alpha}{\mu_\alpha \mu_\delta   \mu_\beta } \, B_{g_2}^-(\omega, t)  
+
\frac{\beta \delta}{\mu_\alpha \mu_\delta   \mu_\beta } \, B_{g_2''}^-(\omega, t) 
\bigg] dk
\nn\\
&
+
\frac{1}{2\pi} \int_{k\in \mathcal L^-}  
\frac{e^{ik(x+L)}}{1 - e^{4ikL}} \, \bigg[
\frac{\mu_\alpha }{\mu_\delta   \mu_\beta } 
\,  B_{g_0}^-(\omega, t)
+ 
\frac{i\delta k}{1+\delta k^2} \, B_{g_0'}^+(\omega, t)
\nn\\
&\quad
+
\frac{\beta}{\mu_\alpha \mu_\delta  \mu_\beta } \, B_{g_0''}^-(\omega, t)
- \frac{\alpha}{\mu_\alpha \mu_\delta   \mu_\beta } 
\, 
B_{g_2}^-(\omega, t)
-
\frac{\beta \delta}{\mu_\alpha \mu_\delta  \mu_\beta } \, B_{g_2''}^-(\omega, t)
\bigg]
\, dk
\nn\\
&
+
\frac{1}{2\pi} \int_{k\in\mathcal L^+} \frac{e^{ik(x-L)}}{1-e^{-4ikL}}
\, \bigg[
-\frac{\mu_\alpha }{\mu_\delta   \mu_\beta } \, 
B_{h_0}^-(\omega, t)
-
\frac{i\delta k}{1+\delta k^2} \,
B_{h_0'}^+(\omega, t)
\nn\\
&\quad
-
\frac{\beta}{\mu_\alpha \mu_\delta  \mu_\beta } \, 
B_{h_0''}^-(\omega, t)
+
\frac{\alpha}{\mu_\alpha \mu_\delta   \mu_\beta } \, 
B_{h_2}^-(\omega, t)
+
\frac{\beta \delta}{\mu_\alpha \mu_\delta  \mu_\beta }  \, B_{h_2''}^-(\omega, t)
\bigg]
\, dk\ .
\end{align}
\end{subequations}

\begin{remark}[Solution to the linear Nwogu system on a finite interval]\label{non-reg-r}
If $\beta=0$ then $\mu_\beta \equiv 1$, $\omega = k \mu_\alpha/\mu_\delta$ and formulas \eqref{sols-comb} provide the solution to the following initial-boundary value problem for the linearization of the original  Nwogu system posed on a finite interval:
\begin{equation}\label{lnwog-ibvp}
\begin{aligned}
&
\begin{aligned}
&\eta_t + u_x - \alpha u_{xxx}  = 0\ , 
\\  
&u_t + \eta_x - \delta u_{xxt} = 0\ ,
\end{aligned}
\quad (x, t) \in (-L,L) \times (0, T)\ ,
\\
&\eta(x, 0) = \eta_0(x), \  u(x, 0) = u_0(x)\ ,
\\
&u(-L, t) = g_0(t), \ u(L, t) = h_0(t)\ , 
\quad
u_{xx}(-L, t) = g_2(t), \ u_{xx}(L, t) = h_2(t)\ .
\end{aligned}
\end{equation}
Moreover, as described in Remark \ref{adm-bc-r}, following our methodology it is possible to obtain the explicit solution to problem \eqref{lnwog-ibvp} when the boundary conditions are replaced by the second set in \eqref{adm-bc-i}. 
\end{remark}

\section{Well-posedness of the nonlinear problem}
\label{sec:wp-s}

In this section, we prove that the initial-boundary value problem \eqref{eq:Nwogu3nd} for the regularized Nwogu system on a finite interval is well-posed in appropriate function spaces. This problem corresponds to the case of homogeneous, reflective boundary conditions. 
%
%

Taking $\varepsilon = 1$ without loss of generality, the weak formulation of the regularized system in \eqref{eq:Nwogu3nd} is 
\begin{equation}\label{eq:weakd}
\begin{aligned}
&\eta_t+ (I-b\partial_x^2)^{-1}\partial_x\left(u+ \eta u +a u_{xx}\right)=0\ ,\\
&u_t+ (I-d\partial_x^2)^{-1}\partial_x\left(\eta+\tfrac{1}{2}u^2 \right)= 0\ ,
\end{aligned}
\end{equation}
where $\partial_x^j$ denotes the $j$-th generalized derivative with respect to $x$ for $j\geq 1$ where we omit the index in case $j=1$.
In the first equation of \eqref{eq:weakd}, the operator $(I-b\partial_x^2)^{-1}$ is the inverse of $(I-b\partial_x^2)$ with domain  $H^1(-L,L)$, while the operator $(I-d\partial_x^2)^{-1}$ is the inverse of the operator $(I-d\partial_x^2)$ with domain $X=H^2\cap H^1_0(-L,L)$ where $H^1_0(-L,L):=\{v\in H^1(-L,L):~ v(L)=v(-L)=0\}$. 
The operator $L_N:=(I-b \partial_x^2)^{-1}\partial_x$ is realized as the convolution
\begin{equation}\label{eq:green1a}
(L_Nf)(x)=\int_{-L}^{L}F_\xi(x,\xi) f(\xi)~d\xi\ ,
\end{equation}
where $F$ is the Green's function for the two-point boundary value problem
\begin{equation}\label{eq:green1b}
\begin{aligned}
&w-b w_{xx}=-f_x,\quad x\in (-L,L)\ ,\\
&w_x(-L)=w_x(L)=0\ ,
\end{aligned}
\end{equation}
and is defined for $x,\xi\in [-L,L]$ as (e.g., see \cite{zl2013})
$$
F(x,\xi):=-\frac{1}{b W}
\left\{
\begin{array}{lr}
\omega_1(\xi)~\omega_2(x), & -L\leq \xi\leq x,\\ 
\omega_1(x)~\omega_2(\xi), & x<\xi\leq L,
\end{array}\right.
$$
where $\omega_1(x)=\cosh\left(\frac{L+x}{\sqrt{b}}\right)$, $\omega_2(x)=\cosh\left(\frac{L-x}{\sqrt{b}}\right)$, and $W=\omega_1\omega_2'-\omega_1'\omega_2=-\frac{1}{\sqrt{b}}\sinh\left(\frac{2L}{\sqrt{b}}\right)$. 

\begin{remark}\label{rem:remark21}
Note that if $f\in C^1$ with $f(-L)=f(L)=0$, then the classical solution of problem \eqref{eq:green1b} can be written in the form \eqref{eq:green1a} with the help of Green's functions after integration by parts.
\end{remark} 

Similarly, the operator $L_D:=(I-d \partial_x^2)^{-1}\partial_x$ is realized as the convolution
\begin{equation}\label{eq:green2a}
(L_Df)(x)=\int_{-L}^{L}G_\xi(x,\xi) f(\xi)~d\xi\ ,
\end{equation}
where $G$ is the Green's function for the two-point boundary value problem
\begin{equation}\label{eq:green2b}
\begin{aligned}
&w-d w_{xx}=-f_x,\quad x\in [-L,L]\ ,\\
&w(-L)=w(L)=0\ ,
\end{aligned}
\end{equation}
and is defined for $x,\xi\in [-L,L]$ as
$$
G(x,\xi):=-\frac{1}{d W}\left\{\begin{array}{lr}
\omega_1(\xi)~\omega_2(x), & -L\leq \xi\leq x,\\ 
\omega_1(x)~\omega_2(\xi), & x<\xi\leq L,\end{array}\right.
$$
where $\omega_1(x):=\sinh\left(\frac{L+x}{\sqrt{d}}\right)$, $\omega_2(x)=\sinh\left(\frac{L-x}{\sqrt{d}}\right)$, and $W=\omega_1\omega_2'-\omega_1'\omega_2=-\frac{1}{\sqrt{d }}\sinh\left(\frac{2L}{\sqrt{d}}\right)$.
(The subscripts $N$ and $D$ in the notation of $L_N$ and $L_D$ denote Neumann and Dirichlet boundary conditions, respectively.)

The following continuity properties of the operators $L_N$ and $L_D$ have been established in \cite{ADM2009}.
\begin{lemma}\label{lem:continuity}
Let $L_N$ and $L_D$ be the operators defined by \eqref{eq:green1a} and \eqref{eq:green2a}.
\begin{enumerate}[(i)]
    \item If $v\in L^2$, then $L_Nv\in H^1$ and $\|L_Nv\|_1\leq M \|v\|$, where $M>0$ depends on $b$.
    \item If $v\in C^m$, $m\geq 0$, then $L_Nv\in C^{m+1}$ and $\|L_Nv\|_{C^{m+1}}\leq M\|v\|_{C^m}$, where $M>0$ depends on $m, b$.
    \item If $v\in L^2$, then $L_Dv\in H^1_0$ and $\|L_Dv\|_1\leq M\|v\|$, where $M>0$ depends on $d$.
    \item If $v\in H^1$, then $L_Dv\in H^2$ and $\|L_Dv\|_2\leq M\|v\|_1$, where $M>0$ depends on $d$.
    \item If $v\in C^m$, $m\geq 0$, then $L_Dv\in C^{m+1}$ and $\|L_D v\|_{C^{m+1}}\leq M\|v\|_{C^m}$, where $M>0$ depends on $m, d$.
\end{enumerate}
\end{lemma}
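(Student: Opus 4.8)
The plan is to read off all five bounds from the explicit closed forms of the Green's kernels $F$ and $G$, treating the low-regularity statements (i), (iii) and the base case of (ii), (v) directly at the level of the convolutions \eqref{eq:green1a}, \eqref{eq:green2a}, and obtaining the higher-order statements (iv) and the inductive step of (ii), (v) by bootstrapping the two-point boundary value problems \eqref{eq:green1b}, \eqref{eq:green2b} that these kernels invert. Two structural features do all the work. Because $F$ and $G$ are products of the hyperbolic solutions $\omega_1,\omega_2$ divided by the nonzero constant Wronskian $W$, the kernels $F_\xi(x,\xi)$ and $G_\xi(x,\xi)$ are uniformly bounded on $[-L,L]^2$ and smooth away from the diagonal, while their $x$-derivatives $F_{x\xi},G_{x\xi}$ are likewise bounded off-diagonal; the only singular behaviour is a single jump across $x=\xi$, of size controlled by $1/b$ (resp. $1/d$), which encodes the $\delta$-source in \eqref{eq:green1b}, \eqref{eq:green2b}. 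In addition, since the Dirichlet kernel satisfies $G(\pm L,\xi)\equiv 0$ as a function of $\xi$, differentiation gives $G_\xi(\pm L,\xi)\equiv 0$, a fact I will use to land in $H^1_0$.

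For (i) and (iii) I would estimate $w:=L_N v$ (resp. $L_D v$) directly. Boundedness of the kernel together with Cauchy--Schwarz on the bounded interval gives $\|w\|\lesssim\|v\|$. For the derivative I differentiate the convolution by splitting at $\xi=x$ and applying Leibniz's rule, which produces
\[
w'(x)=\big[F_\xi(x,x^-)-F_\xi(x,x^+)\big]\,v(x)+\int_{-L}^{L}F_{x\xi}(x,\xi)\,v(\xi)\,d\xi,
\]
that is, multiplication by the bounded jump function plus an integral operator with bounded kernel; both are bounded on $L^2$, so $\|w'\|\lesssim\|v\|$ and hence $\|w\|_1\lesssim\|v\|$, with the constant degenerating as $b\to0$ (resp. $d\to0$), consistent with the stated dependence on the dispersion parameters. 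For (iii) the identity $G_\xi(\pm L,\xi)\equiv0$ forces $w(\pm L)=0$, so indeed $L_Dv\in H^1_0$.

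For (iv) I would invoke elliptic regularity of the Dirichlet resolvent: for $v\in H^1$ the integration by parts permitted by $G(\pm L,\xi)\equiv0$ (cf. Remark \ref{rem:remark21}) identifies $L_Dv$ with $(I-d\partial_x^2)^{-1}v_x$, and since $v_x\in L^2$ the standard bound $\|(I-d\partial_x^2)^{-1}g\|_2\lesssim\|g\|$ yields $\|L_Dv\|_2\lesssim\|v\|_1$. For (ii) and (v) the base case $m=0$ is again read off the kernel: boundedness gives $\|w\|_{C^0}\lesssim\|v\|_{C^0}$, and the Leibniz formula above shows $w'$ is continuous, so $w\in C^1$. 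The inductive step uses the equations themselves: for $v\in C^m$ with $m\ge1$ one has $w_{xx}=(w-v_x)/b$ (resp. with $d$), and since $v_x\in C^{m-1}$ while $w\in C^1$ already, bootstrapping through \eqref{eq:green1b}, \eqref{eq:green2b} raises the regularity one order at a time up to $w\in C^{m+1}$, the norm bounds propagating at each step.

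The step I expect to require the most care is the differentiation of the convolutions across the diagonal in (i)/(iii): one must verify that the Leibniz boundary terms assemble precisely into the bounded jump multiplier, so that the gain of one derivative is realized with no surviving distributional (unbounded) contribution, and, in the Dirichlet case, that these manipulations remain compatible with the vanishing of $w$ at $\pm L$. Tracking the explicit dependence of all constants on $b,d$ (and on $m$) through the hyperbolic formulas is then routine.
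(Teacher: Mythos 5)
Note first that the paper itself contains no proof of this lemma: it is quoted with the remark that these properties ``have been established in \cite{ADM2009}.'' So your proposal can only be judged on its own merits, and on those merits it is correct --- it is the natural (and almost certainly the cited reference's) argument: read the bounds off the explicit hyperbolic Green's kernels, with the Wronskian controlling the diagonal jump, and bootstrap through the ODE for the higher-order statements. In particular, your Leibniz formula for $w'$ is right, and the jump multiplier is in fact the \emph{constant} $F_\xi(x,x^-)-F_\xi(x,x^+)=\tfrac1b$ (resp. $\tfrac1d$), since $\omega_1'\omega_2-\omega_1\omega_2'=-W$; likewise $G(\pm L,\xi)\equiv 0$ does force $L_Dv(\pm L)=0$, giving (iii).

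Three small points deserve tightening. First, a sign: the bootstrap identity is $w_{xx}=(w+v_x)/b$, not $(w-v_x)/b$ (harmless for the estimates). Second, be careful how you invoke \eqref{eq:green1b} in the inductive step: for $v$ not vanishing at $\pm L$, the function $L_Nv$ satisfies the \emph{differential equation} $w-bw_{xx}=-v_x$ pointwise (this follows from a second application of your Leibniz computation, using $\omega_i''=\omega_i/b$ so that the diagonal contributions cancel), but it does \emph{not} satisfy the Neumann boundary conditions of \eqref{eq:green1b} --- indeed $w_x(\pm L)=v(\pm L)/b$; this is exactly the caveat behind Remark \ref{rem:remark21}. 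Your bootstrap only uses the ODE, so the argument survives, but as written ``bootstrapping through \eqref{eq:green1b}'' suggests the full boundary value problem holds, which is false in general. Third, in (iv) the integration by parts requires the kernel to vanish in the \emph{second} slot, $G(x,\pm L)=0$, not $G(\pm L,\xi)=0$ as you cite; the two are equivalent because the Dirichlet Green's function is symmetric, but that is the fact to invoke. With these repairs the proof is complete, including the constant's dependence on $b$, $d$ and $m$.
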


Starting from \eqref{eq:green1a} and integrating by parts while using the boundary conditions  $u(\pm L, t) = 0$ and the fact that $\omega_1, \omega_2$ satisfy the homogeneous counterpart of problem \eqref{eq:green1b}, we find that $L_Nu_{xx}=\frac{1}{b}u_x+\frac{1}{b}L_N u$. In turn, we can rewrite system \eqref{eq:weakd} in terms of the operators $L_N$, $L_D$ as
\begin{equation}\label{eq:regweak}
\eta_t=L_N(\tfrac{a+b}{b}u+\eta u)+\tfrac{a}{b}u_x\ ,\quad
u_t=L_D(\eta+\tfrac{1}{2}u^2)\ ,
\end{equation}
which after integration in time leads to the system of integral  equations
\begin{equation}\label{eq:regweak2}
\begin{aligned}
&\eta(x,t)=\eta_0(x)+\int_0^t \left[ L_N(\tfrac{a+b}{b}u+\eta u)+\tfrac{a }{b}u_x \right] d\tau\ ,
\\
&u(x,t)=u_0(x)+\int_0^tL_D(\eta+\tfrac{1}{2}u^2)~d\tau\ .
\end{aligned}
\end{equation}

\begin{remark}
It is implied immediately that any classical solution of the initial-boundary value problem~\eqref{eq:Nwogu3nd} is a weak solution of system \eqref{eq:weakd}.
\end{remark}

First, we establish the uniqueness of solutions of system \eqref{eq:regweak2} in the spaces $H^1_T:=C(0,T;H^1)$ and $H^2_T:=C(0,T;H^2\cap H^1_0)$. These are Banach spaces and for $u\in H^s_T$ their norm is defined as $\|u\|_{H^s_T}=\sup_{0\leq t\leq T}\|u(\cdot,t)\|_s$.

\begin{proposition}[Uniqueness]
\label{prop:uniqueness}
Let $0<T<\infty$ and $(\eta_0,u_0)\in H^1\times (H^2\cap H^1_0)$. Then, system \eqref{eq:regweak2} has at most one solution $(\eta,u)\in  H_T^1 \times H_T^2$.
\end{proposition}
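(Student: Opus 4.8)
The plan is to argue uniqueness by a Gronwall-type estimate applied directly to the integral formulation \eqref{eq:regweak2}. Suppose $(\eta_1,u_1)$ and $(\eta_2,u_2)$ are two solutions in $H_T^1\times H_T^2$ issuing from the same data $(\eta_0,u_0)$, and set $w=\eta_1-\eta_2$ and $v=u_1-u_2$, so that $w(\cdot,0)=0$ and $v(\cdot,0)=0$. Subtracting the two copies of \eqref{eq:regweak2} and linearizing the nonlinearities through the identities $\eta_1u_1-\eta_2u_2=\eta_1 v+u_2 w$ and $\tfrac12(u_1^2-u_2^2)=\tfrac12(u_1+u_2)v$, I would obtain
\begin{equation*}
\begin{aligned}
&w(x,t)=\int_0^t\left[L_N\!\left(\tfrac{a+b}{b}v+\eta_1 v+u_2 w\right)+\tfrac{a}{b}v_x\right]d\tau\ ,\\
&v(x,t)=\int_0^t L_D\!\left(w+\tfrac12(u_1+u_2)v\right)d\tau\ .
\end{aligned}
\end{equation*}

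Next I would estimate $\|w(t)\|_1$ and $\|v(t)\|_2$ separately. For the first equation I would invoke part (i) of Lemma \ref{lem:continuity} to control $\|L_N(\cdots)\|_1$ by the $L^2$-norm of its argument, then bound the products using the one-dimensional embedding $H^1\hookrightarrow C^0$ (so that $\|\eta_1 v\|\lesssim\|\eta_1\|_1\|v\|$ and $\|u_2 w\|\lesssim\|u_2\|_2\|w\|$), arriving at $\|w(t)\|_1\lesssim\int_0^t(\|v\|_2+\|w\|_1)\,d\tau$ with a constant depending on $\|\eta_1\|_{H^1_T}$, $\|u_2\|_{H^2_T}$ and on $a,b$. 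For the second equation I would use part (iv), namely $\|L_D z\|_2\lesssim\|z\|_1$, together with the fact that $H^1(-L,L)$ is a Banach algebra in one space dimension, to bound $\|(u_1+u_2)v\|_1\lesssim(\|u_1\|_2+\|u_2\|_2)\|v\|_2$ and hence $\|v(t)\|_2\lesssim\int_0^t(\|w\|_1+\|v\|_2)\,d\tau$.

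The crucial structural point---and the reason the two components must be sought in the asymmetric pair of spaces $H^1_T\times H^2_T$---is the term $\tfrac{a}{b}v_x$ in the first equation: to absorb it into the $H^1$-norm of $w$ one needs $\|v_x\|_1\leq\|v\|_2$, which forces the velocity difference $v$ to lie in $H^2$. I expect this to be the only delicate point; everything else reduces to the uniform boundedness of the solutions on $[0,T]$ (which is automatic since they lie in $H^1_T$ and $H^2_T$) and to the continuity properties of $L_N$ and $L_D$.

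Finally, setting $\Phi(t):=\|w(t)\|_1+\|v(t)\|_2$ and adding the two estimates yields an inequality of the form $\Phi(t)\leq C\int_0^t\Phi(\tau)\,d\tau$ for $t\in[0,T]$, with $C$ depending only on $T$, on $a,b,d$, and on the finite norms $\|\eta_i\|_{H^1_T},\|u_i\|_{H^2_T}$. Since $\Phi(0)=0$, Gronwall's inequality forces $\Phi\equiv 0$ on $[0,T]$, whence $w\equiv 0$ and $v\equiv 0$; that is, $(\eta_1,u_1)=(\eta_2,u_2)$, establishing uniqueness.
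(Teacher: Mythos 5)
Your proof is correct and follows essentially the same route as the paper: subtract the two integral equations, split the nonlinear differences into cross terms, apply the continuity properties of $L_N$ and $L_D$ from Lemma \ref{lem:continuity} together with the algebra/embedding property of $H^1$ in one dimension, and conclude via Gr\"onwall's inequality with zero initial data. The only differences are cosmetic (your grouping $\eta_1 v + u_2 w$ versus the paper's $\eta u_1 + \eta_2 u$, and your more explicit remarks on the role of the $\tfrac{a}{b}v_x$ term), so nothing further is needed.
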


\begin{proof}
Suppose that system \eqref{eq:regweak2}  has two solutions  $(\eta_1,u_1)$ and $(\eta_2,u_2)$   in  $H_T^1 \times H_T^2$. Then, the differences $\eta=\eta_1-\eta_2$ and $u=u_1-u_2$ satisfy
$$
\eta(x, t) = \int_0^t \left[L_N(\tfrac{a+b}{b}u+\eta u_1+\eta_2u)+\tfrac{a}{b}u_x\right]d\tau\ ,
\quad
u(x, t) = \int_0^t L_D\left(\eta+\tfrac{1}{2}u(u_1+u_2)\right) d\tau\ ,
$$
with $\eta(x, 0) = u(x, 0) = 0$. 
Thus, using the (algebra property of $H^s$) inequality $\|fg\|_s\leq \|f\|_s\|g\|_s$ for any $f,g\in H^s$ with $s>1/2$ together with   Lemma \ref{lem:continuity}, we have
$$\|\eta\|_1+\|u\|_2\lesssim \int_0^t\|\eta(\tau)\|_1+\|u(\tau)\|_2~d\tau\ .$$
Then, Gr\"onwall's inequality   implies  $\|\eta\|_1+\|u\|_2\lesssim \|\eta(0)\|_1+\|u(0)\|_2 = 0$ and so the solution, if it exists, is unique. 
\end{proof}

Next, we establish the existence of solutions to \eqref{eq:regweak2}. 
\begin{proposition}[Existence]\label{prop:weaksol1}
Suppose that $(\eta_0,u_0)\in H^1\times (H^2\cap H^1_0)$. Then, there exists time $T>0$ such that system \eqref{eq:regweak2} has a unique solution $(\eta,u)\in  H_T^1\times  H_T^2$.
\end{proposition}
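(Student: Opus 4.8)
The plan is to recast the existence problem as a fixed-point equation for the integral system \eqref{eq:regweak2} and to apply the Banach contraction principle in $H^1_T \times H^2_T$. First I would define the map $\Phi = (\Phi_1, \Phi_2)$ acting on a pair $(\eta, u)$ by the right-hand sides of \eqref{eq:regweak2}, namely
\[
\Phi_1(\eta, u)(t) = \eta_0 + \int_0^t \left[L_N(\tfrac{a+b}{b} u + \eta u) + \tfrac{a}{b} u_x\right] d\tau, \qquad \Phi_2(\eta, u)(t) = u_0 + \int_0^t L_D(\eta + \tfrac12 u^2)\, d\tau,
\]
so that a fixed point of $\Phi$ in $H^1_T \times H^2_T$ is precisely a solution of \eqref{eq:regweak2}.

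Next I would verify that $\Phi$ maps a closed ball $B_R = \{(\eta,u) : \|\eta\|_{H^1_T} + \|u\|_{H^2_T} \le R\}$ into itself, for a suitable radius $R$ depending on $\|\eta_0\|_1 + \|u_0\|_2$ and for all sufficiently small $T$. The key inputs are the continuity properties of Lemma \ref{lem:continuity}, together with the algebra inequality $\|fg\|_s \le \|f\|_s \|g\|_s$ for $s > 1/2$. For the first component, since $u \in H^2 \hookrightarrow L^2$ and $\eta u \in H^1 \hookrightarrow L^2$, part (i) of Lemma \ref{lem:continuity} gives $L_N(\tfrac{a+b}{b}u + \eta u) \in H^1$ with norm controlled by $\|u\| + \|\eta\|_1 \|u\|_1$; moreover $u_x \in H^1$ because $u \in H^2$, so the integrand lies in $H^1$ uniformly in $\tau$. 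For the second component, $\eta + \tfrac12 u^2 \in H^1$ by the algebra property, so part (iv) gives $L_D(\eta + \tfrac12 u^2) \in H^2$, while part (iii) guarantees membership in $H^1_0$; since $u_0 \in H^1_0$ as well, $\Phi_2(\eta, u)(t) \in H^2 \cap H^1_0$ for each $t$. Collecting these estimates produces a bound of the form
\[
\|\Phi_1(\eta,u)\|_{H^1_T} + \|\Phi_2(\eta,u)\|_{H^2_T} \le \|\eta_0\|_1 + \|u_0\|_2 + C\, T\, (R + R^2),
\]
so choosing $R = 2(\|\eta_0\|_1 + \|u_0\|_2)$ and then $T$ small enough makes $\Phi$ a self-map of $B_R$. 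The contraction estimate then mirrors the proof of Proposition \ref{prop:uniqueness}: for two pairs in $B_R$, the differences of the nonlinear terms satisfy $\eta_1 u_1 - \eta_2 u_2 = (\eta_1-\eta_2) u_1 + \eta_2 (u_1-u_2)$ and $u_1^2 - u_2^2 = (u_1-u_2)(u_1+u_2)$, which are bounded via the algebra property and the uniform bound $R$, giving
\[
\|\Phi(\eta_1,u_1) - \Phi(\eta_2,u_2)\|_{H^1_T \times H^2_T} \le C\, T\, (1 + R)\, \|(\eta_1,u_1) - (\eta_2,u_2)\|_{H^1_T \times H^2_T}.
\]
Shrinking $T$ so that $C T (1+R) < 1$, the Banach fixed-point theorem yields a unique fixed point of $\Phi$ in $B_R$; its uniqueness in all of $H^1_T \times H^2_T$ is already ensured by Proposition \ref{prop:uniqueness}.

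The step I expect to be the main obstacle is the self-mapping estimate for the first component, specifically the linear term $\tfrac{a}{b} u_x$. Unlike the terms processed through $L_N$ and $L_D$, this term enjoys no smoothing, so closing the estimate for $\eta$ in $H^1$ forces $u_x \in H^1$, i.e.\ $u \in H^2$ --- this is exactly what dictates the asymmetric choice of spaces $H^1 \times H^2$ and explains why the velocity must be sought with one extra degree of regularity relative to the surface elevation. A secondary point requiring care is confirming that the time integrals define functions that are genuinely \emph{continuous} (not merely bounded) in $t$ with values in $H^1$ and $H^2 \cap H^1_0$ respectively; this follows because $(\eta,u) \in C(0,T;\cdot)$ and $L_N, L_D$ are bounded linear operators, so the integrands are continuous in $\tau$ and the fixed point indeed lands in the stated Banach spaces.
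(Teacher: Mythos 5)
Your proof is correct and takes essentially the same approach as the paper: a Banach contraction argument for the integral map built from the right-hand side of \eqref{eq:regweak2}, on the ball $B_R$ with $R = 2\left(\|\eta_0\|_1 + \|u_0\|_2\right)$ in $H^1_T \times H^2_T$, using Lemma \ref{lem:continuity} and the algebra property of $H^s$, with $T$ chosen small to close both the self-mapping and contraction estimates. The only cosmetic difference is that you verify the self-mapping property by directly estimating the nonlinear terms (yielding the bound $C\,T\,(R+R^2)$), whereas the paper deduces it from the contraction inequality itself via $\|\Gamma(\eta,u)\|_E \le \|\Gamma(\eta,u)-\Gamma(0,0)\|_E + \|\Gamma(0,0)\|_E$.
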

\begin{proof}
Let $E$ denote the Banach space $H^1_T\times H^2_T$ with norm $\|(v,w)\|_E=\|v\|_{H^1_T}+\|w\|_{H^2_T}$. Moreover, for $R:= 2\|(\eta_0,u_0)\|_E$, let $B_R=\{(v,w)\in E: \|(v,w)\|_E\leq R\}$ be the ball of radius $R>0$ centered at the origin of $E$.
Then, the mapping $\Gamma:E\to E$ given by
$$
\Gamma(v,w)=\left(\eta_0+\int_0^t \left[L_N(\tfrac{a+b}{b}w+v w)+\tfrac{a}{b}w_x\right] d\tau, \ u_0+\int_0^tL_D(v+\tfrac{1}{2}w^2)~d\tau\right)
$$
is well-defined, as one may deduce from the regularity of the operators $L_N, L_D$ and of that of the initial conditions $\eta_0, u_0$ and of $v, w$. If $(\eta_1,u_1), (\eta_2,u_2) \in B_R$, then
$$
\begin{aligned}
\left\|\Gamma(\eta_1,u_1)-\Gamma(\eta_2,u_2)\right\|_E 
&\leq 
\left\|\int_0^T \left[ L_N\left\{\tfrac{a+b}{b}(u_1-u_2)+\eta_1(u_1-u_2)+u_2(\eta_1-\eta_2)\right\}+\tfrac{a}{b}(u_1-u_2)_x \right] d\tau\right\|_1
\\
& \quad +
\left\|\int_0^tL_D\left(\eta_1-\eta_2+\tfrac{1}{2}(u_1-u_2)(u_1+u_2)\right)~d\tau\right\|_2
\\
&\leq CT\left[(1+\|\eta_1\|_{H^1_T})\|u_1-u_2\|_{H^2_T}+\|u_2\|_{H^1_T}\|\eta_1-\eta_2\|_{H^1_T}+\|u_1-u_2\|_{H^2_T}+\right. \\
&\quad \left. + \|\eta_1-\eta_2\|_{H^1_T}+\|u_1-u_2\|_{H^2_T}\|u_1+u_2\|_{H^1_T}\right]
\\
& \leq CT\left(2+4R\right) \|(\eta_1,u_1)-(\eta_2,u_2)\|_E\ .
\end{aligned}
$$
Hence, choosing $T=1/[2C(2+4R)]$ implies 
\begin{equation}\label{contr-ineq}
\left\|\Gamma(\eta_1,u_1)-\Gamma(\eta_2,u_2)\right\|_E \leq \frac 12  \|(\eta_1,u_1)-(\eta_2,u_2)\|_E. 
\end{equation}
Moreover, if $(\eta,u)\in B_R$ then by the triangle inequality, the inequality \eqref{contr-ineq} and the definition of $R$,
\begin{equation}\label{into-ineq}
\|\Gamma(\eta,u)\|_E 
\leq \|\Gamma(\eta,u)-\Gamma(0,0)\|_E+\|\Gamma(0,0)\|_E
\leq \frac 12  R+\|(\eta_0,u_0)\|_E = R,
\end{equation}
which shows that $\Gamma$ maps $B_R$ into $B_R$. Together, the inequalities \eqref{contr-ineq} and \eqref{into-ineq} imply that $\Gamma:B_R\to B_R$  is a contraction  and so by  the contraction mapping theorem $\Gamma$ has a unique fixed point in $B_R$. 
\end{proof}

\begin{remark}[Continuous dependence on the data]
As usual in the proof of well-posedness via a contraction mapping argument, the continuous dependence of the data-to-solution map (which is the third component of Hadamard well-posedness) readily follows from the contraction inequality \eqref{contr-ineq}.
\end{remark}

We now establish the existence of solutions of \eqref{eq:regweak2} in spaces of smooth functions. Specifically, for $m\geq 1$ we consider the spaces
$C_0^{m+1}:=\left\{w\in C^{m+1}: w( \pm L)=0\text{ and } w_{xx}( \pm L)=0\right\}$
and 
$\widetilde{C}_0^m=\{v\in C^m: v_x(\pm L)=0\}$.

Then, we have the following result.
\begin{proposition}[Smooth solution]\label{prop:smoothsol}

Given $m\geq 1$ and initial conditions $(\eta_0,u_0)\in \widetilde{C}^m_0\times C^{m+1}_0$, there exists time $T_m>0$ such that the system \eqref{eq:regweak2} has a unique solution $(\eta,u)\in C(0,T_m;\widetilde{C}^m_0)\times C(0,T_m;C^{m+1}_0)$.
\end{proposition}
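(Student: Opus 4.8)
The plan is to prove the statement by a contraction mapping argument that runs exactly parallel to the proof of Proposition \ref{prop:weaksol1}, but carried out in the smooth function spaces rather than the Sobolev ones. Taking $\varepsilon = 1$, I would work in the Banach space $E_m := C(0, T_m; \widetilde{C}^m_0) \times C(0, T_m; C^{m+1}_0)$ with norm $\|(v,w)\|_{E_m} = \sup_{0 \leq t \leq T_m}\left(\|v\|_{C^m} + \|w\|_{C^{m+1}}\right)$, fix $R := 2\|(\eta_0, u_0)\|_{E_m}$, let $B_R \subset E_m$ be the closed ball of radius $R$, and consider the same map $\Gamma$ as in Proposition \ref{prop:weaksol1} restricted to $B_R$. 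Well-definedness (continuity in $t$ with values in the spatial spaces) follows since the integrands are continuous in time, so the two genuine tasks are to show that $\Gamma$ maps $B_R$ into itself and that it is a contraction once $T_m$ is small; the resulting fixed point is then the unique solution of \eqref{eq:regweak2} in the required class.

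The heart of the matter, and the step that genuinely differs from the $L^2$-based argument, is checking that $\Gamma$ actually lands in the prescribed spaces, i.e.\ that it reproduces the boundary conditions built into $\widetilde{C}^m_0$ and $C^{m+1}_0$. For the $\eta$-component, Lemma \ref{lem:continuity}(ii) gives $L_N(\tfrac{a+b}{b}w + vw) \in C^{m+1}$ for $(v,w) \in E_m$, while $\tfrac{a}{b}w_x \in C^m$ since $w \in C^{m+1}$, so the integrand is in $C^m$; moreover, because $L_N$ realizes the Neumann problem \eqref{eq:green1b}, its output has vanishing first derivative at $\pm L$, and the compatibility condition $w_{xx}(\pm L) = 0$ encoded in $C^{m+1}_0$ makes $\partial_x(\tfrac{a}{b}w_x)$ vanish there as well. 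Combined with $\eta_{0,x}(\pm L) = 0$, this gives $\eta_x(\pm L, t) = 0$, so the first component remains in $\widetilde{C}^m_0$. For the $u$-component, Lemma \ref{lem:continuity}(v) gives $L_D(v + \tfrac12 w^2) \in C^{m+1}$, and the Dirichlet structure of \eqref{eq:green2b} yields $(L_D f)(\pm L) = 0$ directly. For the second condition I would evaluate the defining relation $L_D f - d\,\partial_x^2(L_D f) = f_x$ at the endpoints, so that $\partial_x^2(L_D f)(\pm L) = -f_x(\pm L)/d$ with $f = v + \tfrac12 w^2$; here $v_x(\pm L) = 0$ (from $v \in \widetilde{C}^m_0$) and $(w^2)_x = 2ww_x$ vanishes at $\pm L$ (since $w(\pm L) = 0$), whence $f_x(\pm L) = 0$. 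Together with $u_0 \in C^{m+1}_0$ this secures $u(\pm L, t) = u_{xx}(\pm L, t) = 0$, so the second component stays in $C^{m+1}_0$.

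With the mapping property in hand, the contraction estimate is routine: using the algebra property $\|fg\|_{C^m} \lesssim \|f\|_{C^m}\|g\|_{C^m}$ together with the continuity bounds of Lemma \ref{lem:continuity}(ii),(v), the difference $\Gamma(\eta_1, u_1) - \Gamma(\eta_2, u_2)$ is bounded, exactly as in Proposition \ref{prop:weaksol1}, by a multiple of $T_m(1+R)\,\|(\eta_1,u_1) - (\eta_2,u_2)\|_{E_m}$, and the explicit time integral supplies the factor $T_m$ that makes $\Gamma$ both a self-map of $B_R$ and a contraction once $T_m$ is chosen small enough (depending on $R$ and $m$). The Banach fixed point theorem then yields existence and uniqueness in $B_R$. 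I expect the only real obstacle to be the boundary-condition bookkeeping of the second paragraph: the argument closes precisely because the compatibility conditions $v_x(\pm L) = 0$ and $w(\pm L) = w_{xx}(\pm L) = 0$ defining the spaces are exactly the ones that the Neumann-type operator $L_N$ and the Dirichlet-type operator $L_D$ reproduce, so the iteration stays within $\widetilde{C}^m_0 \times C^{m+1}_0$.
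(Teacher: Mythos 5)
Your proposal is correct and follows essentially the same route as the paper: a contraction argument in $E_m = C(0,T_m;\widetilde{C}^m_0)\times C(0,T_m;C^{m+1}_0)$ whose only non-routine step is the boundary-condition bookkeeping, which you carry out exactly as the paper does (Neumann structure of $L_N$ plus $w_{xx}(\pm L)=0$ for the $\eta$-component; Dirichlet structure of $L_D$ plus the identity $(L_Df)_{xx}(\pm L)=\tfrac{1}{d}\left[(L_Df)(\pm L)+f_x(\pm L)\right]$ and $f_x(\pm L)=0$ for the $u$-component). Two harmless remarks: the right-hand side of \eqref{eq:green2b} is $-f_x$, not $f_x$, which only flips a sign in your intermediate formula and not the conclusion; and the fixed point theorem gives uniqueness only in $B_R$, whereas uniqueness in the full class claimed by the statement is obtained, as the paper notes, by the Gr\"onwall argument of Proposition \ref{prop:uniqueness} (smooth solutions are in particular $H^1_T\times H^2_T$ solutions, so that result applies directly).
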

\begin{proof}

If $(\eta_0,u_0)\in \tilde{C}^m_0\times C^{m+1}_0$ with $m\geq 1$, then using the same arguments as in the proof of Proposition~\ref{prop:uniqueness}  we can show uniqueness of solution to \eqref{eq:regweak2} in the space $C(0,T;\tilde{C}^m_0)\times C(0,T;C^{m+1}_0)$.

Consider the Banach space $E_m:=C(0,T_m;\widetilde{C}_0^m)\times C(0,T_m;C_0^{m+1})$. Following similar arguments as in Proposition \ref{prop:weaksol1}, define the mapping $\Gamma_m$ for $(v,w)\in E_m$ as
$$\Gamma_m(v,w)=\left(\eta_0+\int_0^t \left[ L_N(\tfrac{a+b}{b}w+v w)+\tfrac{a}{b}w_x\right] d\tau, \ u_0+\int_0^tL_D(v+\tfrac{1}{2}w^2)~d\tau\right)\ .$$
Define also $$\phi(x,t)=\eta_0(x)+\int_0^t \left[L_N(\tfrac{a+b}{b}w+v w)+\tfrac{a}{b}w_x\right]d\tau\ .$$ Observe that $\phi_x(\pm L,t)=0$ because  $\eta_0 \in \widetilde{C}_0^m$, $w \in C_0^{m+1}$ and $L_N f$ satisfies the boundary value problem~\eqref{eq:green1b}. Similarly, define
$$
\psi(x,t)=u_0+\int_0^tL_D(v+\tfrac{1}{2}w^2)~d\tau\ ,
$$
and note that $\psi(\pm L,t)=0$ because $u_0 \in C_0^{m+1}$ and $L_D f$ satisfies the boundary value problem~\eqref{eq:green2b}. Furthermore, in our case where $f=v+\tfrac{1}{2}w^2$ and $v_x(\pm L, t)=w(\pm L, t)=0$, we have that $f_x(\pm L, t)=v_x(\pm L, t)+w(\pm L, t)w_x(\pm L, t)=0$ and so
$$(L_Df)_{xx}(\pm L, t)=\tfrac{1}{d}\left[(L_Df)(\pm L, t)+f_x(\pm L, t)\right]=0\ .$$ Hence, the second set of boundary conditions $\psi_{xx}(\pm L,t)=0$ also hold true. Thus, by Lemma \ref{lem:continuity} we deduce that the mapping $\Gamma_m:E_m\to E_m$ is well-defined. The rest of the proof follows as in  Proposition~\ref{prop:weaksol1}.
\end{proof}

Finally, we show that the smooth solution of system \eqref{eq:regweak2} guaranteed by Proposition \ref{prop:smoothsol} is the classical solution of system \eqref{eq:Nwogu3nd}. 
\begin{proposition}\label{prop:main1}
For $(\eta_0,u_0)\in \widetilde{C}^2_0\times C^3_0$, let $(\eta,u)\in H^1_T\times H^2_T$ be the solution of   system \eqref{eq:regweak2} as guaranteed by Proposition \ref{prop:smoothsol}. Then, $(\eta,u)\in C(0,T;\widetilde{C}^2_0)\times C(0,T;C^3_0)$ satisfies system \eqref{eq:Nwogu3nd} pointwise in $[0,T]$ and $(\eta_t,u_t)\in C(0,T;\widetilde{C}^2_0)\times C(0,T;C^3_0)$.
\end{proposition}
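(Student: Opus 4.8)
The plan is to begin from the integral system \eqref{eq:regweak2}. For data $(\eta_0,u_0)\in\widetilde C^2_0\times C^3_0$, Proposition \ref{prop:smoothsol} with $m=2$ furnishes a solution in $C(0,T;\widetilde C^2_0)\times C(0,T;C^3_0)$, and by the uniqueness of Proposition \ref{prop:uniqueness} this coincides with the $H^1_T\times H^2_T$ solution on their common interval; thus I may take $(\eta,u)\in C(0,T;\widetilde C^2_0)\times C(0,T;C^3_0)$. Writing $g:=\tfrac{a+b}{b}u+\eta u$ and $f:=\eta+\tfrac12 u^2$, I would first recover the differential-in-time relations \eqref{eq:regweak} by differentiating \eqref{eq:regweak2} in $t$: since $t\mapsto(\eta,u)$ is continuous and $L_N,L_D$ are bounded by Lemma \ref{lem:continuity}, the integrands $L_N g+\tfrac{a}{b}u_x$ and $L_D f$ are continuous in $t$, so the fundamental theorem of calculus gives $\eta_t=L_N g+\tfrac{a}{b}u_x$ and $u_t=L_D f$ pointwise in $t$.

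Next I would read off the regularity of the time derivatives. Because $\eta\in C^2$ and $u\in C^3$ we have $g,f\in C^2$, so Lemma \ref{lem:continuity}(ii),(v) give $L_N g,L_D f\in C^3$; as $u_x\in C^2$, it follows that $\eta_t\in C^2$ and $u_t\in C^3$, and since these expressions depend continuously on $(\eta,u)$ through the bounded operators $L_N,L_D$, also $\eta_t\in C(0,T;C^2)$ and $u_t\in C(0,T;C^3)$. To place them in the boundary-adapted spaces I reuse the endpoint computations from the proof of Proposition \ref{prop:smoothsol}: as $g(\pm L)=0$ and $L_N g$ solves \eqref{eq:green1b}, its Neumann condition gives $(L_N g)_x(\pm L)=0$, which together with $u_{xx}(\pm L)=0$ yields $(\eta_t)_x(\pm L)=0$, i.e. $\eta_t\in\widetilde C^2_0$; likewise $u_t=L_D f$ satisfies $u_t(\pm L)=0$ from \eqref{eq:green2b}, and $(u_t)_{xx}(\pm L)=\tfrac1d\big[(L_D f)(\pm L)+f_x(\pm L)\big]=0$ because $f_x(\pm L)=\eta_x(\pm L)+u(\pm L)u_x(\pm L)=0$, so $u_t\in C^3_0$.

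It then remains to turn \eqref{eq:regweak} into the pointwise system \eqref{eq:Nwogu3nd} (with $\varepsilon=1$). Applying $(I-d\partial_x^2)$ to $u_t=L_D f$ and using that $L_D f$ solves \eqref{eq:green2b}, namely $(I-d\partial_x^2)L_D f=-f_x$, yields $u_t-d u_{xxt}=-f_x=-\eta_x-u u_x$, the momentum equation. Applying $(I-b\partial_x^2)$ to $\eta_t=L_N g+\tfrac{a}{b}u_x$ and using $(I-b\partial_x^2)L_N g=-g_x$ together with $(I-b\partial_x^2)u_x=u_x-b u_{xxx}$ gives
$$\eta_t-b\eta_{xxt}=-g_x+\tfrac{a}{b}\big(u_x-b u_{xxx}\big)=-u_x-(\eta u)_x-a u_{xxx},$$
the continuity equation, while the conditions $u(\pm L)=u_{xx}(\pm L)=0$ are immediate from $u\in C^3_0$.

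I expect the main obstacle to be the identity $(I-b\partial_x^2)L_N g=-g_x$ just invoked: for the Neumann operator $L_N$ the boundary terms arising from integration by parts do not vanish automatically, so before appealing to Remark \ref{rem:remark21} one must verify $g(\pm L)=0$, which holds precisely because $u(\pm L)=0$. The companion technical point is the legitimacy of differentiating \eqref{eq:regweak2} in $t$ and the continuity of $(\eta_t,u_t)$ in the $C^2$ and $C^3$ norms, both of which rest on the mapping properties of Lemma \ref{lem:continuity}.
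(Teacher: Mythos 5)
Your proposal is correct and follows essentially the same route as the paper: identify the smooth solution from Proposition \ref{prop:smoothsol} (with uniqueness from Proposition \ref{prop:uniqueness}), differentiate the integral equations \eqref{eq:regweak2} in time via Lemma \ref{lem:continuity} to get \eqref{eq:regweak}, check the boundary behaviour of $\eta_t$ and $u_t$ through the Green's-function boundary value problems \eqref{eq:green1b}--\eqref{eq:green2b}, and recover \eqref{eq:Nwogu3nd} by applying $(I-b\partial_x^2)$ and $(I-d\partial_x^2)$. The only cosmetic difference is that you apply the elliptic operators to the form \eqref{eq:regweak} (with $g=\tfrac{a+b}{b}u+\eta u$) rather than to the form \eqref{eq:weakd} used in the paper, and you spell out the endpoint verifications (e.g.\ $g(\pm L)=0$ before invoking Remark \ref{rem:remark21}) that the paper leaves implicit.
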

\begin{proof}
By Proposition \ref{prop:smoothsol}, there exists time $T_2>0$ that depends only on the initial condition and the parameters $a, b, d$ (actually only on $a$ and $d$ as one may obtain upper bounds of the solutions independent of $b$)  such that a unique solution of  system \eqref{eq:regweak2} exists in $E_2$. From system \eqref{eq:regweak2} we observe that $\eta$ and $u$ are differentiable with respect to time $t$ and that $\eta_t$ and $u_t$ are given by  \eqref{eq:regweak}. In addition, from \eqref{eq:regweak2} we see that $\|\eta\|_{C^2}+\|u\|_{C^3}$ is always bounded and thus the solution $(\eta,u)$ of Proposition \ref{prop:smoothsol} belongs to $C(0,T;\widetilde{C}^2_0)\times C(0,T;C^3_0)$. Moreover, system \eqref{eq:regweak} and Lemma \ref{lem:continuity} imply $(\eta_t,u_t)\in C(0,T;\widetilde{C}^2_0)\times C(0,T;C^3_0)$. Observing now that, in view of \eqref{eq:weakd}, 
$$
\eta_t-b \eta_{xxt}=L_N(u+\eta u +a  u_{xx})-b L_N(u+\eta u +a u_{xx})_{xx}= -\left(u+\eta u +a u_{xx}\right)_x$$
and, similarly, 
$u_t-bu_{xxt}=-(\eta+\frac{1}{2}u^2)_x$, 
we conclude that a weak solution of \eqref{eq:weakd} is also the classical solution of \eqref{eq:Nwogu3nd} for appropriate initial conditions.
\end{proof}

We conclude this section with an estimate for the maximum time of existence for the regularized Nwogu system  with wall-boundary conditions \eqref{eq:Nwogu3nd} where here we take into account the influence of the parameter $\varepsilon$.  
Like any other representative of the $abcd$-systems \eqref{eq:coeffs}, in the case of wall-boundary conditions the regularized Nwogu system  satisfies the mass conservation law
$$
\frac{d}{dt}M(t;v)=0, \quad  M(t;v):=\int_{-L}^L v(x,t)~ dx\ ,
$$
for both $v=\eta$ and $v=u$. The solution of the initial-boundary value problem \eqref{eq:Nwogu3nd} in the special case where $b=d>0$    preserves additionally the energy functional 
\begin{equation}\label{energy}
E(t)=\frac{1}{2}\int_{-L}^L \left[\eta^2+(1+\varepsilon\eta)u^2-\varepsilon au_x^2\right] dx\ ,
\end{equation}
in the sense that $E(t)=E(0)$ for $t\geq 0$. This can be seen by writing system \eqref{eq:Nwogu3nd} in the form
$$
\eta_t+P_x=0, \quad u_t+Q_x=0\ ,
$$
where $P=u+\varepsilon \eta u+\varepsilon a u_{xx}-\varepsilon b\eta_{xt}$ and $Q=\eta+\varepsilon\frac{1}{2}u^2-\varepsilon du_{xt}$. Then, note that $P(-L)=P(L)=0$ for all $t\geq 0$. Thus, multiplying the first equation by $Q$ and the second one by $P$, and  then adding the resulting equations, we obtain  via integration by parts the desired functional \eqref{energy}.

Systems that satisfy the previous energy conservation function \eqref{energy} have 
$\nu=\frac{2(3\theta^2-2)}{3\theta^2-1}$ and $\mu=0$, 
with the parameters $a,b,d$  given by  \eqref{eq:coeffs} as \begin{equation}\label{eq:coeffs2}
a=\theta^2-\tfrac{2}{3},\quad b=d=\tfrac{1}{2}(1-\theta^2)\ , \quad \theta^2<\tfrac{2}{3}.
\end{equation}
The energy functional \eqref{energy} is of order three, and thus does not give directly any useful energy estimates. On the other hand, if we multiply the first equation of \eqref{eq:Nwogu3nd} by $\eta$ and the second one by $u+a u_{xx}$, and then integrate and add the resulting equations, we obtain 
\begin{equation}\label{eq:enerest}
\frac{1}{2}\frac{d}{dt}\int_{-L}^L[\eta^2+\varepsilon b \eta_x^2+u^2+\varepsilon(d-a)u_x^2-\varepsilon^2 adu_{xx}^2]~d  x =\frac{1}{2}\int_{-L}^{L}[-2\varepsilon^2 auu_xu_{xx}-\varepsilon u_x\eta^2]~d x\ .
\end{equation}
Following \cite{ADM2009}, we have  
\begin{align*}
&\varepsilon \left| \int_{-L}^L \eta^2 u_x~dx\right|\leq C\varepsilon \|\eta\|\|\eta_x\|\|u_x\|\leq C\left(\|\eta\|^2+\varepsilon\|\eta_x\|^2+\varepsilon\|u_x\|^2\right)^{\frac 32}\ ,
\\
&\varepsilon^2 \left|\int_{-L}^Lu u_x u_{xx}~dx\right| \leq C\varepsilon^2\|u_x\|^2\|u_{xx}\|\leq C\left(\varepsilon \|u_x\|^2+\varepsilon \|u_{xx}\|^2 \right)^{\frac 32}\ .
\end{align*}
Then, letting
$$I_\varepsilon=I_\varepsilon(t):= \int_{-L}^L[\eta^2+\varepsilon b \eta_x^2+u^2+\varepsilon(d-a)u_x^2-\varepsilon^2 adu_{xx}^2]~d  x\ ,$$
equality \eqref{eq:enerest} leads to $\dfrac{d}{dt}I_\varepsilon\leq CI_\varepsilon^{3/2}$, %
which is a differential inequality of Bernoulli type with solution
$$I_\varepsilon(t)\leq \frac{I_\varepsilon(0)}{\big(1-Ct\sqrt{I_\varepsilon(0)}\,\big)^2}\leq \frac{I_1(0)}{\big(1-Ct\sqrt{I_0(0)}\,\big)^2}\ ,$$
for sufficiently small $t$ and $\varepsilon\in [0,1]$. This provides an {\em a priori} $H^1\times H^2$ bound, independent of $\varepsilon$ (and $d$), for the solution. From this, we conclude that the maximal existence time $T$ will be \textit{independent} of $\varepsilon$. It is worth mentioning that there is \textit{no} global well-posedness result known for Nwogu-type systems, even for the Cauchy problem~\cite{BCS2004}.

\begin{remark}
One may extend in a straightforward manner the Proposition \ref{prop:main1} for $(\eta_0,u_0)\in\widetilde{C}^m_0\times C^{m+1}_0$, $m\geq 2$ and conclude that system \eqref{eq:Nwogu3nd} with $a<0$ and $b,d>0$ has a unique solution local in time with $(\partial^k_t\eta,\partial^k_t u)\in \widetilde{C}^m_0\times C^{m+1}_0$ for all $k\geq 0$, $m\geq 2$. 
\end{remark}


\section{A modified Galerkin method for the numerical discretization of the regularized Nwogu system}
\label{num-s}

In this section, we consider a modified Galerkin method for the initial-boundary value problem~\eqref{eq:Nwogu3nd}. For simplicity, we take again $\varepsilon=1$. Let $-L=x_0<x_1<\ldots<x_N=L$ be a uniform grid of the domain $[-L,L]$ with grid size $h=\Delta x=x_{i+1}-x_i=2L/N$. For integers $r\geq 2$ and $0\leq \mu\leq r-2$ we define the finite element spaces 
$$
S_h=S_h(\mu,r)=\left\{\phi\in C^\mu[-L,L]:\phi\big|_{[x_i,x_{i+1}]}\in\mathbb{P}_{r-1}\right\}\ ,
\quad
S_h^0=\left\{\phi\in S_h:\phi(\pm L)=0\right\}\ ,
$$
where $\mathbb{P}_q$ denotes the space of polynomials of degree at most $q$. The space $S_h$ is a subspace of $H^{\mu+1}$ while the space $S_h^0$ is subspace of $H^{\mu+1}\cap H^1_0$. For example, the space of cubic splines will be the space $S_h(2,4)$, and the space of Lagrange polynomials of order $q=r-1$ with $r\geq 2$ will be the space $S_h(0,r)$. In general, it is well known (see for example \cite{BF1973,S1980}) that  $S_h$ and $S_h^0$ have the following approximation properties.
\begin{lemma}
Let $r\geq 2$, $0\leq \mu\leq r-2$, and $m$, $k$ be integers such that $0\leq m\leq \mu+1$ and $m<k\leq r$. Then, there exists a constant $C$, independent of $h$, such that
\begin{align*}
&\min_{\chi\in S_h}\|(w-\chi)^{(m)}\|\leq Ch^{k-m}\|w^{(k)}\|\quad \text{for}\quad w\in H^k\ ,
\\
&\min_{\chi\in S_h}\|(w-\chi)^{(m)}\|_\infty \leq Ch^{k-m}\|w^{(k)}\|_\infty \quad \text{for}\quad  w\in C^k\ .
\end{align*}
Both results are valid also in the case of $S_h^0$ when $w\in H^k\cap H_0^1$ and $w\in C_0^k$, respectively.
\end{lemma}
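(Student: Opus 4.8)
The plan is to reduce the global estimate to a purely local polynomial approximation problem on a single element, treat that local problem with the Bramble--Hilbert lemma, and recover the powers of $h$ by a scaling argument on a reference element. The only genuinely delicate point is the construction of a single approximant $\chi$ that actually belongs to $S_h$ (respectively $S_h^0$), i.e. that carries the prescribed global $C^\mu$ smoothness and, in the subspace case, vanishes at $x=\pm L$; the quantitative error bound is routine once such an approximant is available.

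First I would fix a quasi-interpolation operator $Q_h\colon H^k\to S_h$ (and $Q_h\colon C^k\to S_h$ for the $L^\infty$ statement) that is \emph{locally defined}, reproduces every polynomial of degree at most $r-1$, is stable, preserves the $C^\mu$ continuity built into $S_h$, and---when restricted to functions vanishing at the endpoints---maps into $S_h^0$. A de Boor--Fix- or Scott--Zhang-type construction, in which the coefficients with respect to a local (B-spline) basis are produced by dual functionals supported on small neighbourhoods of each node, does exactly this; see \cite{BF1973,S1980}. One then sets $\chi=Q_hw$, so that $\min_{\chi\in S_h}\|(w-\chi)^{(m)}\|\le\|(w-Q_hw)^{(m)}\|$, and it remains to bound the right-hand side.

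Next I would pass to the reference element $\hat K=[0,1]$ through the affine map $x=x_i+h\hat x$ on each subinterval $K_i=[x_i,x_{i+1}]$, writing $\hat w(\hat x)=w(x_i+h\hat x)$ and $\hat\chi(\hat x)=\chi(x_i+h\hat x)$. The chain rule gives the scalings $\|(w-\chi)^{(m)}\|_{L^2(K_i)}=h^{1/2-m}\|(\hat w-\hat\chi)^{(m)}\|_{L^2(\hat K)}$ and $|\hat w|_{H^k(\hat K)}=h^{k-1/2}|w|_{H^k(K_i)}$. Since the local operator reproduces $\mathbb P_{r-1}\supseteq\mathbb P_{k-1}$ (using $k\le r$), the Bramble--Hilbert lemma yields $\|(\hat w-\hat\chi)^{(m)}\|_{L^2(\hat K)}\le\hat C\,|\hat w|_{H^k(\hat K)}$ with $\hat C$ depending only on $m,k,r$. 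Combining the two scalings collapses the $h$-powers to exactly $h^{k-m}$, giving $\|(w-\chi)^{(m)}\|_{L^2(K_i)}\le\hat C\,h^{k-m}|w|_{H^k(K_i)}$; squaring and summing over $i$, with $\sum_i|w|_{H^k(K_i)}^2=\|w^{(k)}\|^2$, produces the first inequality. The $L^\infty$ estimate has identical structure: the same local operator and the $W^{k,\infty}$ Bramble--Hilbert bound give $\|(w-\chi)^{(m)}\|_{L^\infty(K_i)}\le\hat C\,h^{k-m}|w|_{W^{k,\infty}(K_i)}$, and taking the maximum over $i$ instead of summing yields the second inequality. The argument applies verbatim to $S_h^0$ once $Q_h$ is chosen to respect the boundary conditions, which is precisely why the boundary- and smoothness-preserving features of the quasi-interpolant are the crux of the proof, the remaining scaling and summation being standard.
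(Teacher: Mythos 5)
Your proof is correct, and it follows essentially the same route as the paper: the paper does not prove this lemma at all, quoting it as a well-known approximation property with references \cite{BF1973,S1980}, and your argument (a local, polynomial-reproducing, smoothness- and boundary-condition-preserving quasi-interpolant combined with affine scaling to a reference configuration and the Bramble--Hilbert lemma) is precisely the standard argument underlying those references. The only minor imprecision is that for a $C^\mu$-conforming quasi-interpolant the restriction of $Q_h w$ to an element $K_i$ depends on $w$ over a patch $\omega_i$ of neighbouring elements, so the reference-element estimate should read $\|(w-Q_hw)^{(m)}\|_{L^2(K_i)}\leq C h^{k-m}|w|_{H^k(\omega_i)}$; the bounded overlap of the patches then yields the stated global bounds after summing (or taking the maximum) over $i$, exactly as you indicate.
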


In what follows, we describe the modified Galerkin semidiscretization of system \eqref{eq:Nwogu3nd} inspired by \cite{WB1999}. Similar procedures have been used for the discretization of other Boussinesq-type equations including the Serre and Bona-Smith systems  \cite{MSM2017,DMS2010}.

\subsection{Semidiscretization}\label{sec:estimates}

First, consider the bilinear forms $A_q:H^1\times H^1\to\mathbb{R}$ such that for all $u,v\in H^1$, $A_q(u,v)=(u,v)+ q(u_x,v_x)$ for $q=b>0$ and $q=d>0$.
These particular bilinear forms are symmetric, which will result in symmetric matrices in the full-discretization. They are also bounded and coercive. 

Assume that the maximal time of existence of solutions to  problem \eqref{eq:Nwogu3nd} guaranteed by Proposition~\ref{prop:smoothsol}  is $T>0$. 
We define the discrete Laplacian to be the operator $\partial^2_h:H^1_0\to S_h^0$ such that
$$
(\partial_h^2 \tilde{u},\phi)=-(\tilde{u}_x,\phi_x)\quad \forall \phi\in S_h^0\ .
$$
The modified Galerkin method is defined as follows. We seek approximations $\tilde{\eta}:[0,T]\to S_h$ and $\tilde{u}:[0,T]\to S_h^0$ of $\eta$ and $u$, respectively, such that
\begin{equation}\label{eq:semidiscrete}
\begin{aligned}
A_b(\tilde{\eta}_t,\chi)+([\tilde{u}+\tilde{\eta}\tilde{u}+a \partial^2_h \tilde{u}]_x,\chi)=0 \quad \forall\chi\in S_h\ ,\\
A_d(\tilde{u}_t,\phi)+(\tilde{\eta}_x+\tilde{u}\tilde{u}_x,\phi)= 0 \quad \forall\phi\in S_h^0\ ,
\end{aligned}\quad 0<t\leq T\ ,
\end{equation}
with initial conditions
$\tilde{\eta}(x,0)=R_h\eta_0(x)$,  $\tilde{u}(x,0)=R_h^0u_0(x)$, 
where $R_h^0:H^1_0\to S_h^0$ and $R_h:H^1\to S_h$ are the elliptic projections such that for any $v\in H^1_0$ and $w\in H^1$ they satisfy $A_d(R_h^0v,\phi)=A_d(v,\phi)$ for all $\phi\in S_h^0$
and $A_b(R_hw,\chi)=A_b(w,\chi)$ for all $\chi\in S_h$, respectively.
It is known that the elliptic projections  $R_h^0$ and $R_h$ have favorable stability and convergence properties in addition to its optimal accuracy (see \cite{ADM2010} and references therein).

The results we will use are summarized in the following lemma.
\begin{lemma}\label{lem:ellipt}
If $\mu\geq 0$ and $k=0$ or $1$, then
\begin{enumerate}[(i)]
\item the projection $R_h^0$ is stable in $L^2$ and $H^1$. In particular,  there exists a constant $C$ independent of $h$ such that $\|R_h^0v\|_k \leq C\|v\|_k \quad \forall v\in H_0^1$, and
 \item  it satisfies the the optimal error estimate $\|R_h^0v-v\|_k\leq Ch^{r-k}\|v\|_r$ for all  $v\in H^r\cap H^1_0$.
\end{enumerate}
Similar results hold true for the elliptic projection $R_h$.
\end{lemma}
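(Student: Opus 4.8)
This lemma collects the standard stability and approximation properties of the elliptic (Ritz) projections, so my proof would assemble the classical finite-element arguments rather than introduce anything new. The key structural observation is that, since $A_d(u,v)=(u,v)+d(u_x,v_x)$ is symmetric and its induced norm $\|u\|_{A_d}=(\|u\|^2+d\|u_x\|^2)^{1/2}$ is equivalent to $\|\cdot\|_1$, the map $R_h^0$ is precisely the $A_d$-orthogonal projection of $H^1_0$ onto the closed subspace $S_h^0$; likewise $R_h$ is the $A_b$-orthogonal projection onto $S_h$. I would prove the four assertions in turn — $H^1$ stability, the $H^1$ error bound ($k=1$), the $L^2$ error bound ($k=0$), and finally the $L^2$ stability — invoking throughout the approximation property of the preceding lemma, the Galerkin orthogonality $A_d(R_h^0 v-v,\chi)=0$ for all $\chi\in S_h^0$, the one-dimensional elliptic regularity of $I-d\partial_x^2$ (whose Green's function is explicit), and, for the last point, an inverse inequality available because the grid is uniform.

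The $H^1$ results are immediate. Being an orthogonal projection in the $A_d$ inner product, $R_h^0$ is a contraction, $\|R_h^0 v\|_{A_d}\le\|v\|_{A_d}$, and the norm equivalence gives $\|R_h^0 v\|_1\le C\|v\|_1$, which is the $k=1$ stability. The same orthogonality makes $R_h^0 v$ the best $A_d$-approximation to $v$ from $S_h^0$, so $\|v-R_h^0 v\|_1\le C\|v-R_h^0 v\|_{A_d}\le C\min_{\chi\in S_h^0}\|v-\chi\|_1$; feeding in the approximation lemma with $m=0,1$ and $k=r$ yields the optimal rate $\|v-R_h^0 v\|_1\le Ch^{r-1}\|v\|_r$.

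For the $k=0$ error estimate I would run the Aubin--Nitsche duality argument. Writing $e=v-R_h^0 v$, let $w\in H^2\cap H^1_0$ solve $A_d(w,\phi)=(e,\phi)$ for all $\phi\in H^1_0$, i.e. $w-dw_{xx}=e$ with $w(\pm L)=0$; regularity gives $\|w\|_2\le C\|e\|$. Galerkin orthogonality then lets me subtract any $\chi\in S_h^0$, so that $\|e\|^2=A_d(e,w)=A_d(e,w-\chi)\le C\|e\|_1\,\|w-\chi\|_1$; choosing $\chi$ from the approximation lemma with $k=2$ gives $\|w-\chi\|_1\le Ch\|w\|_2\le Ch\|e\|$, and combining with the $H^1$ bound $\|e\|_1\le Ch^{r-1}\|v\|_r$ produces $\|e\|\le Ch^r\|v\|_r$, the optimal $L^2$ rate.

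The genuinely delicate point — and the place I expect the main obstacle — is the $L^2$ stability $\|R_h^0 v\|\le C\|v\|$ for every $v\in H^1_0$, because the contraction property only controls $\|R_h^0 v\|$ by $\|v\|_{A_d}\sim\|v\|_1$, not by $\|v\|$. I would again argue by duality: for $g\in L^2$ with $\|g\|=1$, let $w$ solve $A_d(w,\phi)=(g,\phi)$, so $\|w\|_2\le C$, and use the identity $(R_h^0 v,g)=A_d(R_h^0 v,w)=A_d(v,R_h^0 w)$, which follows from symmetry and the definition of the projection. Integrating by parts the gradient term (the boundary contributions vanish since $v\in H^1_0$) rewrites this as $(v,\,R_h^0 w-d(R_h^0 w)_{xx})$, and it then suffices to bound $\|R_h^0 w-d(R_h^0 w)_{xx}\|$ by $\|g\|$. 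Comparing with $w-dw_{xx}=g$, the crux is to estimate $\|(R_h^0 w-w)_{xx}\|$, which I would control by splitting off an interpolant, applying the inverse inequality $\|\chi\|_{xx}\lesssim h^{-2}\|\chi\|$ on $S_h^0$, and inserting the $L^2$ error bound just proved, obtaining $\|(R_h^0 w-w)_{xx}\|\le C\|w\|_2\le C\|g\|$ and hence $\|R_h^0 v\|\le C\|v\|$. For the lowest-smoothness case $\mu=0$ the quantity $(R_h^0 w)_{xx}$ must be read elementwise and the integration by parts generates interface jump terms; these are absorbed by the same inverse estimates on the uniform mesh, and this bookkeeping, rather than any new idea, is the technical heart of the stability claim. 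Finally, all four arguments transfer verbatim to $R_h$, the $A_b$-orthogonal projection onto $S_h$, the only modification being that the dual problem now carries the Neumann conditions $w_x(\pm L)=0$, for which the same $H^2$ regularity holds; this establishes the asserted ``similar results.''
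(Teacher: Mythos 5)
A preliminary remark on the comparison itself: the paper offers no proof of this lemma --- it is quoted as a collection of known results, with a pointer to \cite{ADM2010} and the references therein --- so your proposal has to be judged on its own merits. Three of your four steps are the standard arguments and are correct: $H^1$ stability is the contraction property of the $A_d$-orthogonal projection combined with the equivalence of the energy norm and $\|\cdot\|_1$; the $H^1$ error bound is C\'ea's lemma plus the approximation property; and the $L^2$ error bound is Aubin--Nitsche duality, using the explicit $H^2$ regularity of the one-dimensional problem $w-dw_{xx}=e$, $w(\pm L)=0$. This settles (i) for $k=1$ and (ii) for $k=0,1$.

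The genuine gap is in the $L^2$ stability, exactly at the step you wave off as bookkeeping. For $\mu=0$ --- the Lagrange spaces $S_h(0,r)$ that the paper explicitly uses --- the derivative $(R_h^0 w)_x$ jumps at the nodes, and elementwise integration by parts of $d\,(v_x,(R_h^0 w)_x)$ leaves the interface sum $d\sum_j v(x_j)\,[(R_h^0 w)_x]_j$ (up to sign), where $[\,\cdot\,]_j$ denotes the jump at the node $x_j$. No inverse inequality can bound this sum by $C\|v\|\,\|g\|$, because the obstruction sits on the factor $v$: nodal values of an $H^1_0$ function are not controlled by its $L^2$ norm. In fact, the statement you are trying to prove is false for merely continuous elements: let $v_\epsilon\in H^1_0$ be the tent function of height $1$ and half-width $\epsilon<h$ centered at an interior node $x_j$. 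Then $\|v_\epsilon\|\sim\sqrt{\epsilon}\to 0$, while $A_d(v_\epsilon,\chi)\to -d\,[\chi_x]_j$ as $\epsilon\to 0$ for every fixed $\chi\in S_h^0$; since $S_h^0$ is finite dimensional, $R_h^0 v_\epsilon$ converges (for fixed $h$) to the $A_d$-Riesz representer of the jump functional $\chi\mapsto -d[\chi_x]_j$, which is nonzero (test it against the hat function at $x_j$). Hence $\|R_h^0 v_\epsilon\|/\|v_\epsilon\|\to\infty$, so for $\mu=0$ part (i) with $k=0$ cannot hold as stated, let alone be proved by your route. What your duality identity does prove is the weaker bound $\|R_h^0 v\|\le C\,(\|v\|+h\|v\|_1)$: write $(R_h^0v,g)=A_d(v,R_h^0w)=(v,g)+A_d(v,R_h^0w-w)$ and estimate the last term by $C\|v\|_1\|R_h^0w-w\|_1\le Ch\|v\|_1\|w\|_2\le Ch\|v\|_1\|g\|$. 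For the smooth splines ($\mu\ge1$, e.g.\ $S_h(2,4)$) your integration by parts is legitimate, since then $S_h^0\subset H^2$ and no jumps arise, and $k=0$ stability does hold by your argument. So the honest version of the lemma is: (i) with $k=0$ for $\mu\ge1$, or the weakened bound in general. The weakened bound is also all that the paper ever needs: the $L^2$ stability is only invoked for functions with at least $H^2$ regularity (in the stability lemma for $f_h$ and $g_h$, and in Lemma \ref{lem:reguldiscl}), where $\|R_h^0 v\|\le\|v\|+Ch^2\|v\|_2$ suffices.
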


The standard $L^2$-projection $P_h:L^2 \to S_h$ is defined as $(P_h w,\chi)=(w,\chi)$ for all $\chi\in S_h$ and $w\in L^2$. Similarly, the $L^2$-projection $P_h^0:L^2 \to S_h^0$ is defined as $(P_h^0 u,\phi)=(u,\phi)$ for all $\phi\in S_h^0$ and $u\in L^2$. It is known that the $L^2$-projection satisfies the error estimate $\|P_hv-v\|_\infty\leq Ch^r\|v\|_{r,\infty}$ for all $v\in H^r$. This result is also true for the $P_h^0$ projection.

Define the mappings $f_h:L^2\to S_h$ and $g_h:L^2\to S_h^0$ such that 
\begin{equation*}
A_b(f_h[w],\chi)=(w,\chi_x)\quad \forall \chi\in S_h\ ,
\quad \text{and}\quad
A_d(g_h[w],\phi)=(w,\phi_x)\quad \forall \phi\in S_h^0\ 
\end{equation*}
and also let 
$$
\|w\|_{-1}:=\sup_{{z\in H^1}\atop{z\not=0}}\frac{(w,z)}{\|z\|_1},
\quad
\|w\|_{-2}:=\sup_{{z\in H^2\cap H^1_0}\atop{ z\not=0}}\frac{(w,z)}{\|z\|_2}.
$$
Then, we have the following result.
\begin{lemma}
Both functionals $f_h$ and $g_h$ satisfy the following stability properties:
\begin{enumerate}[(i)]
    \item $\|f_h[w]\|_1\lesssim \|w\|$ and $\|g_h[w]\|_1\lesssim \|w\|$ for all $w\in L^2$,
    \item  $\|f_h[\psi]\|\lesssim \|\psi\|_{-1}$  for all $\psi\in S_h^0$.
\end{enumerate}
\end{lemma}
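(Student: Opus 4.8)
The plan is to treat the two stability bounds separately, deriving (i) from coercivity of the bilinear forms and (ii) from a duality (Aubin--Nitsche) argument combined with a standard inverse estimate.

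For part (i), I would test the defining identities against the functions themselves. Taking $\chi = f_h[w]$ in $A_b(f_h[w],\chi)=(w,\chi_x)$ and using the coercivity $A_b(v,v)=\|v\|^2+b\|v_x\|^2 \ge \min\{1,b\}\,\|v\|_1^2$, one gets
$$\min\{1,b\}\,\|f_h[w]\|_1^2 \le A_b(f_h[w],f_h[w]) = (w,(f_h[w])_x) \le \|w\|\,\|f_h[w]\|_1,$$
whence $\|f_h[w]\|_1 \lesssim \|w\|$. The identical computation with $A_d$ on $S_h^0$ gives $\|g_h[w]\|_1 \lesssim \|w\|$. This is the routine part.

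For part (ii), write $\phi := f_h[\psi]$ and estimate $\|\phi\|$ by duality, $\|\phi\| = \sup_{v\in L^2,\,v\ne 0}(\phi,v)/\|v\|$. For fixed $v$, I would introduce the auxiliary Neumann problem $A_b(w,\zeta)=(v,\zeta)$ for all $\zeta\in H^1$, which by elliptic regularity on the interval has a solution $w\in H^2$ with $\|w\|_2 \lesssim \|v\|$. Testing this with $\zeta=\phi\in H^1$, then using symmetry of $A_b$, the Galerkin orthogonality of the elliptic projection $R_h$ (that is, $A_b(w-R_hw,\chi)=0$ for all $\chi\in S_h$, so $A_b(\phi,w-R_hw)=0$ since $\phi\in S_h$), and finally the definition of $f_h$ with test function $R_hw\in S_h$, I obtain
$$(\phi,v) = A_b(\phi,w) = A_b(\phi,R_hw) = (\psi,(R_hw)_x).$$
The decisive step is then the splitting $(\psi,(R_hw)_x) = (\psi,w_x) + (\psi,(R_hw-w)_x)$.

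The first term is exactly what the negative norm is designed for: since $w_x\in H^1$, one has $(\psi,w_x) \le \|\psi\|_{-1}\|w_x\|_1 \le \|\psi\|_{-1}\|w\|_2 \lesssim \|\psi\|_{-1}\|v\|$. The second, genuinely discrete term is where I expect the main obstacle, because for low-regularity spaces (e.g.\ Lagrange elements, $\mu=0$) one has $R_hw\notin H^2$, so $(R_hw)_x$ cannot be inserted directly into $\|\cdot\|_{-1}$. I would instead bound it crudely, $(\psi,(R_hw-w)_x) \le \|\psi\|\,\|(R_hw-w)_x\|$, and control the two factors separately: by C\'ea's lemma together with the approximation property of $S_h$, $\|(R_hw-w)_x\| \le \|R_hw-w\|_1 \lesssim h\|w\|_2 \lesssim h\|v\|$; and by the inverse inequality $\|\psi\| \lesssim h^{-1}\|\psi\|_{-1}$ valid for $\psi\in S_h^0$, which follows from testing the supremum defining $\|\psi\|_{-1}$ against $z=\psi$ and using the standard inverse estimate $\|\psi\|_1 \lesssim h^{-1}\|\psi\|$. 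The powers of $h$ cancel, so $(\psi,(R_hw-w)_x) \lesssim \|\psi\|_{-1}\|v\|$. Adding the two contributions and taking the supremum over $v$ yields $\|f_h[\psi]\| \lesssim \|\psi\|_{-1}$, as claimed.
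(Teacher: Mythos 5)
Your proof is correct, and part (i) matches the paper's (coercivity of $A_b$, $A_d$ tested against $f_h[w]$, $g_h[w]$ themselves). For part (ii), however, you take a genuinely different route. The paper works with an auxiliary Neumann problem whose datum is $\psi_x$ itself: setting $A = I - b\partial_x^2$ with domain $\left\{w \in H^2 : w_x(\pm L) = 0\right\}$ and solving $Aw = \psi_x$, it first shows $\|w\| \lesssim \|\psi_x\|_{-2} \leq \|\psi\|_{-1}$ (the second inequality by integration by parts, which is where $\psi(\pm L) = 0$ enters), and then observes the exact identification $f_h[\psi] = -R_h w$, so that the $L^2$-stability of the elliptic projection finishes the proof --- no approximation properties, no inverse estimates, no powers of $h$ anywhere. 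Your argument is instead the classical Aubin--Nitsche duality: the auxiliary problem carries the dual datum $v$, Galerkin orthogonality replaces $w$ by $R_h w$, and the genuinely discrete error term is controlled by the cancellation between the inverse estimate $\|\psi\| \lesssim h^{-1}\|\psi\|_{-1}$ and the approximation bound $\|R_h w - w\|_1 \lesssim h\|w\|_2$ (which, as you note, follows from C\'ea's lemma plus the approximation property with $k=2$, $m=1$, rather than from the paper's stated projection lemma, which assumes full $H^r$ regularity). Both proofs are complete; yours needs more machinery (elliptic regularity for the dual problem, quasi-optimality, inverse inequality) but follows a template that generalizes to settings where the exact Green's-function-type identification $f_h[\psi] = -R_h w$ is unavailable. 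A noteworthy by-product of your route: you never use $\psi(\pm L) = 0$, only $\psi \in S_h$ (for the inverse inequality), so your argument in fact establishes the bound for all $\psi \in S_h$, not just $\psi \in S_h^0$.
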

\begin{proof}
The proof of (i) follows immediately from the definition of $f_h$ and $g_h$ and the properties of the bilinear forms $A_d$ and $A_b$. 

To prove (ii),  consider the operator $A=I-b\partial_x^2$ with domain $X=\left\{w\in H^2: w_x(\pm L)=0\right\}\subset H^2$, and the problem $Aw=\psi_x$ with $w_x(\pm L)=0$ and $\psi\in S_h^0$. Then,
\begin{equation}\label{eq:boundw}
\|\psi_x\|_{-2} 
= \sup_{{z\in H^2}\atop{z\not=0}} \frac{(\psi_x,z)}{\|z\|_2}=\sup_{{z\in H^2}\atop{z\not=0}} \frac{(Aw,z)}{\|z\|_2} 
\geq \frac{(Aw,A^{-1}w)}{\|A^{-1}w\|_2}
=\frac{\|w\|^2}{\|A^{-1}w\|_2}\geq C\frac{\|w\|^2}{\|w\|} =C\|w\|\ .
\end{equation}
Moreover, for any $z\in H^2$, $z\not=0$ (and because $\psi\in S_h^0$) we have 
$$\frac{(\psi_x,z)}{\|z\|_2}=\frac{-(\psi ,z_x)}{\|z\|_2}\leq \frac{\|\psi \|_{-1}\|z_x\|_1}{\|z\|_2}\leq \|\psi \|_{-1}\ .$$
Hence, 
$\|\psi _x\|_{-2}=\sup_{{z\in H^2}\atop{z\not=0}} \frac{(\psi _x,z)}{\|z\|_2}\leq \|\psi \|_{-1}\ ,$
and by \eqref{eq:boundw} we have $\|w\|\lesssim \|\psi \|_{-1}$.
Also, for $\chi\in S_h$ we have
$A_b(R_hw,\chi)=A_b(w,\chi)=(\psi_x,\chi)=-(\psi,\chi_x)=-A_b(f_h[\psi],\chi)$
and thus $R_hw =-f_h[\psi]$. This implies $\|f_h[\psi]\|=\|R_h w\|\lesssim \|w\|\lesssim \|\psi\|_{-1}$,  completing the proof.
\end{proof}

The following identity for the discrete Laplacian operator will turn out to be very useful.
\begin{lemma}
For all $w\in H_0^1$ and $d>0$, we have
$\partial_h^2 R_h^0w-P_h^0[w_{xx}]=\tfrac{1}{d}(R_h^0w-P_h^0w)$.
\end{lemma}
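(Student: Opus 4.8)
The plan is to prove the identity weakly, by pairing both sides with an arbitrary $\phi\in S_h^0$ in the $L^2$-inner product and then invoking uniqueness: since both $\partial_h^2 R_h^0w-P_h^0[w_{xx}]$ and $\tfrac{1}{d}(R_h^0w-P_h^0w)$ are elements of $S_h^0$ (the first because $\partial_h^2$ and $P_h^0$ map into $S_h^0$, the second because $R_h^0$ and $P_h^0$ do), it suffices to show that their pairings against every $\phi\in S_h^0$ coincide, whence they are equal as elements of $S_h^0$.

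First I would unwind the left-hand side term by term. By the defining relation of the discrete Laplacian, $(\partial_h^2 R_h^0w,\phi)=-((R_h^0w)_x,\phi_x)$. For the second term, note that because $\phi(\pm L)=0$ integration by parts produces no boundary contribution, so $(w_{xx},\phi)=-(w_x,\phi_x)$; combined with the definition of the $L^2$-projection this gives $(P_h^0[w_{xx}],\phi)=-(w_x,\phi_x)$. (This integration-by-parts reading is exactly what keeps $P_h^0[w_{xx}]$ meaningful under the hypothesis $w\in H^1_0$, without demanding $w\in H^2$.) Subtracting the two expressions yields $(\partial_h^2 R_h^0w-P_h^0[w_{xx}],\phi)=(w_x-(R_h^0w)_x,\phi_x)$.

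Next I would insert the defining property of the elliptic projection. Expanding $A_d(R_h^0w,\phi)=A_d(w,\phi)$ and cancelling the common pieces gives $d\,(w_x-(R_h^0w)_x,\phi_x)=(R_h^0w-w,\phi)$, hence $(w_x-(R_h^0w)_x,\phi_x)=\tfrac{1}{d}(R_h^0w-w,\phi)$. Finally, the $L^2$-projection identity $(w,\phi)=(P_h^0w,\phi)$, valid for $\phi\in S_h^0$, converts the right-hand side into $\tfrac{1}{d}(R_h^0w-P_h^0w,\phi)$. Collecting everything, $(\partial_h^2 R_h^0w-P_h^0[w_{xx}],\phi)=\tfrac{1}{d}(R_h^0w-P_h^0w,\phi)$ for all $\phi\in S_h^0$, and the desired identity follows from the uniqueness remark above.

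I do not expect any genuine obstacle here; the argument is a bookkeeping exercise chaining the three defining relations for $\partial_h^2$, $R_h^0$, and $P_h^0$. The only point that warrants care is the interpretation of $P_h^0[w_{xx}]$, which should be read through its weak form $-(w_x,\phi_x)$ so that the statement remains well-posed for $w\in H^1_0$ and the boundary terms arising in the integration by parts are seen to vanish thanks to $\phi\in S_h^0$.
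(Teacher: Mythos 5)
Your proposal is correct and follows essentially the same route as the paper: both proofs test against an arbitrary $\phi\in S_h^0$, chain the defining relations of $\partial_h^2$, $R_h^0$ (via $A_d(R_h^0w,\phi)=A_d(w,\phi)$), and $P_h^0$, use integration by parts with $\phi(\pm L)=0$ to identify $(w_{xx},\phi)=-(w_x,\phi_x)$, and conclude by uniqueness in $S_h^0$. The only differences are cosmetic bookkeeping (the paper adds and subtracts $\tfrac{1}{d}(R_h^0w,\phi)$ to manufacture $A_d$, whereas you expand and cancel directly), plus your explicit remark on the weak reading of $P_h^0[w_{xx}]$ for $w\in H^1_0$, which the paper leaves implicit.
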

\begin{proof}
For any $\phi\in S_h^0$,  we have
$$
\begin{aligned}
(\partial_h^2 R_h^0w,\phi)& =-((R_h^0 w)_x,\phi_x)-\tfrac{1}{d}(R_h^0w,\phi)+\tfrac{1}{d}(R_h^0w,\phi)
= -\tfrac{1}{d}[A_d(R_h^0w,\phi)-(R_h^0w,\phi)]\\
&=-\tfrac{1}{d}[A_d(w,\phi)-(R_h^0w,\phi)]=-\tfrac{1}{d}[(w,\phi)+d(w_x,\phi_x)-(R_h^0w,\phi)]\\
&=\tfrac{1}{d}(R_h^0w-w,\phi)+(w_{xx},\phi)\ ,
\end{aligned}
$$
which implies the desired formula.
\end{proof}

Given the inverse inequality \cite{Thomee}
\begin{equation}\label{eq:inversineq}
\|\chi\|_1\leq C_0 h^{-1}\|\chi\| \quad \forall~ \chi\in S_h\ ,
\end{equation}
where $C_0$ is independent of $h$, we have the following inverse estimate of the discrete Laplacian $\partial_h^2$ operator (see also \cite{Thomee}).
\begin{lemma}\label{lem:stability}
For any $w\in H^1_0$ and $h>0$, we have
$\|\partial_h^2w\|_1\lesssim h^{-2} \|w\|_1$. In addition, for  any $\phi\in S_h$ we have
$\|\partial_h^2 \phi\|\lesssim h^{-2} \|\phi\|$. Finally, for any  $w\in H^2$  we have
$\|\partial_h^2 w\|\lesssim \|w\|_2$.
\end{lemma}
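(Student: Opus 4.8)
The plan is to prove each of the three bounds in turn, exploiting the defining weak relation $(\partial_h^2 \tilde u, \phi) = -(\tilde u_x, \phi_x)$ for all $\phi\in S_h^0$ (extended from $H^1_0$ to all of $H^1$ by the same formula) together with the inverse inequality \eqref{eq:inversineq}. The crucial structural fact I would use repeatedly is that $\partial_h^2 w \in S_h^0 \subset S_h$, so that the inverse inequality applies not only to the argument but also to the image $\partial_h^2 w$ itself; this is what produces the two powers of $h^{-1}$ in the first two estimates.

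For the first bound, given $w \in H^1_0$ I would first reduce the $H^1$-norm to the $L^2$-norm via the inverse inequality, $\|\partial_h^2 w\|_1 \le C_0 h^{-1}\|\partial_h^2 w\|$, and then estimate $\|\partial_h^2 w\|$ by testing the defining relation against $\phi = \partial_h^2 w$. Cauchy--Schwarz gives $\|\partial_h^2 w\|^2 = -(w_x, (\partial_h^2 w)_x) \le \|w_x\|\,\|(\partial_h^2 w)_x\|$, and a second application of the inverse inequality to $(\partial_h^2 w)_x$ lets me absorb one factor of $\|\partial_h^2 w\|$, yielding $\|\partial_h^2 w\| \lesssim h^{-1}\|w\|_1$ and hence $\|\partial_h^2 w\|_1 \lesssim h^{-2}\|w\|_1$. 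The second bound follows from exactly the same two-fold use of the inverse inequality, the only difference being that for $\phi \in S_h$ I may additionally estimate $\|\phi_x\| \le \|\phi\|_1 \lesssim h^{-1}\|\phi\|$; combining this with the inverse estimate on $\partial_h^2 \phi$ gives $\|\partial_h^2 \phi\| \lesssim h^{-2}\|\phi\|$.

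The third bound is of a different nature and is where the extra regularity pays off. For $w \in H^2$ I would integrate by parts in the weak relation: since every $\phi \in S_h^0$ vanishes at $\pm L$, the boundary terms drop and $(\partial_h^2 w, \phi) = -(w_x, \phi_x) = (w_{xx}, \phi)$ for all $\phi \in S_h^0$. This identifies $\partial_h^2 w$ with the $L^2$-projection $P_h^0[w_{xx}]$, and since $P_h^0$ is a contraction in $L^2$ I obtain $\|\partial_h^2 w\| = \|P_h^0[w_{xx}]\| \le \|w_{xx}\| \le \|w\|_2$, with no negative power of $h$. The main point to get right --- and the only real subtlety --- is that the $h^{-2}$ blow-up in the first two estimates is genuinely sharp for merely $H^1$ data but disappears for $H^2$ data precisely because integration by parts is then legitimate; I would also note that the weak definition of $\partial_h^2$ extends verbatim from $H^1_0$ to $H^1$, which is what makes the statements for $\phi \in S_h$ and $w \in H^2$ meaningful.
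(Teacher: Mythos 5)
Your proof is correct. For the first two estimates it is essentially the paper's own argument: the paper likewise tests the defining relation with $\partial_h^2 w$ itself, applies Cauchy--Schwarz, and uses the inverse inequality \eqref{eq:inversineq} twice --- once on the image $\partial_h^2 w \in S_h^0$ and, for the second estimate, once more on the argument $\phi \in S_h$ --- the only cosmetic difference being that the paper runs the chain as $\|\partial_h^2 w\|_1^2 \le C_0^2 h^{-2}\|\partial_h^2 w\|^2 \le C_0^2 h^{-2}\|w\|_1\|\partial_h^2 w\|_1$ and cancels a factor, while you first reduce the $H^1$-norm to the $L^2$-norm. Where you genuinely go beyond the paper is the third estimate: the paper's proof establishes only the first bound and states that the second can be proved ``similarly'' (citing \cite{Thomee}), never addressing the $H^2$ bound at all, whereas you prove it cleanly by integrating by parts (legitimate because every $\phi \in S_h^0$ vanishes at $\pm L$) to identify $\partial_h^2 w = P_h^0[w_{xx}]$, and then using that the orthogonal $L^2$-projection is an $L^2$-contraction. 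This is the same mechanism that underlies the paper's unnumbered identity $\partial_h^2 R_h^0 w - P_h^0[w_{xx}] = \tfrac{1}{d}\left(R_h^0 w - P_h^0 w\right)$, but applied directly to $w$ rather than to its elliptic projection. You are also right to note that the weak definition of $\partial_h^2$ must be extended from $H^1_0$ to $H^1$ for the statements about $\phi \in S_h$ and $w \in H^2$ to make sense --- neither space is contained in $H^1_0$ --- a point the paper passes over silently.
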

\begin{proof}
By the definition of $\partial_h^2 w$, we have
$
\|\partial_h^2w\|^2 = (\partial_h^2 w,\partial_h^2 w)=-(w_x,(\partial_h^2w)_x)\leq \|w_x\|\|\partial_h^2w\|_1\ .
$
Thus,  $\|\partial_h^2 w\|^2_1\leq C_0^2 h^{-2} \|\partial_h^2w\|^2\leq C_0^2h^{-2}\|w\|_1\|\partial_h^2w\|_1$,
which implies $\|\partial_h^2w\|_1\leq C_0 h^{-2}\|w\|_1$. The second inequality can be proved similarly \cite{Thomee}.
\end{proof}

Furthermore, we have the following lemma.
\begin{lemma}\label{lem:stability2}
For any $u\in H^1_0$ and $h>0$, 
$\|\partial_h^2u\|_{-1}\lesssim \|u_x\|$.
\end{lemma}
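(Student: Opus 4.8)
The plan is to reduce the estimate, by means of the $L^2$-projection, to a single application of the defining relation for $\partial_h^2$. Fix $z\in H^1$ with $z\neq 0$, and recall that $\partial_h^2 u\in S_h^0$. Since $z-P_h^0 z$ is $L^2$-orthogonal to $S_h^0$ by the definition of the projection $P_h^0$, and $\partial_h^2 u\in S_h^0$, we have $(\partial_h^2 u,z)=(\partial_h^2 u,P_h^0 z)$. As $P_h^0 z\in S_h^0$ is an admissible test function, the definition $(\partial_h^2 u,\phi)=-(u_x,\phi_x)$ gives $(\partial_h^2 u,z)=-\big(u_x,(P_h^0 z)_x\big)$, whence by the Cauchy--Schwarz inequality $|(\partial_h^2 u,z)|\leq \|u_x\|\,\|(P_h^0 z)_x\|\leq \|u_x\|\,\|P_h^0 z\|_1$. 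Dividing by $\|z\|_1$ and taking the supremum over $z$, the claim $\|\partial_h^2 u\|_{-1}\lesssim \|u_x\|$ follows at once, provided one controls $\|P_h^0 z\|_1\lesssim \|z\|_1$, i.e. the $H^1$-stability of the $L^2$-projection onto $S_h^0$.

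Thus the whole matter comes down to this stability estimate, which I would obtain by the standard inverse-inequality argument on the uniform grid (see e.g. \cite{Thomee}). Let $\chi\in S_h^0$ be a stable approximation of $z$ satisfying $\|\chi\|_1\lesssim \|z\|_1$ and the first-order bound $\|z-\chi\|\lesssim h\|z\|_1$; such a $\chi$ is available from standard interpolation theory, since $H^1\hookrightarrow C$ in one dimension (alternatively one may use the elliptic projection $R_h^0$ of Lemma \ref{lem:ellipt}, whose $H^1$-stability is already recorded). Writing $P_h^0 z=(P_h^0 z-\chi)+\chi$, I would control the finite-element difference $P_h^0 z-\chi\in S_h$ by the inverse inequality \eqref{eq:inversineq} together with the $L^2$-best-approximation property of $P_h^0$: namely $\|P_h^0 z-\chi\|_1\lesssim h^{-1}\|P_h^0 z-\chi\|\leq h^{-1}\big(\|z-P_h^0 z\|+\|z-\chi\|\big)\leq 2h^{-1}\|z-\chi\|\lesssim \|z\|_1$, where the middle step uses $\|z-P_h^0 z\|\leq\|z-\chi\|$. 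Combined with $\|\chi\|_1\lesssim\|z\|_1$ this yields $\|P_h^0 z\|_1\lesssim\|z\|_1$ and completes the argument.

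I expect the genuinely delicate point to be exactly this $H^1$-stability of the $L^2$-projection: it is the only step that is not a direct consequence of the definitions, and it rests essentially on the regularity (uniformity) of the mesh, with the inverse inequality \eqref{eq:inversineq} balancing the $h^{-1}$ factor against the $O(h)$ $L^2$-approximation error, so that the steep boundary behaviour of $P_h^0 z$ is confined to an $O(1)$ contribution in the $H^1$-seminorm. Everything else — the orthogonality that replaces $z$ by $P_h^0 z$ and the single integration by parts built into the definition of $\partial_h^2$ — is immediate, so no estimates beyond those already available (the inverse inequality and the stability of a projection such as $R_h^0$) are required.
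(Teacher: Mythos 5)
Your core reduction coincides with the paper's own proof: since $\partial_h^2 u\in S_h^0$, the $L^2$-orthogonality of $z-P_h^0z$ to $S_h^0$ lets you replace $z$ by $P_h^0z$, the defining relation of $\partial_h^2$ converts the pairing into $-\big(u_x,(P_h^0z)_x\big)$, and Cauchy--Schwarz leaves the $H^1$-stability bound $\|P_h^0z\|_1\lesssim\|z\|_1$ as the only remaining issue; the paper disposes of that step by citing \cite{CT1987}, while you attempt to prove it. Your proof of that stability is where the gap lies. The approximation property you invoke --- a $\chi\in S_h^0$ with $\|z-\chi\|\lesssim h\|z\|_1$ --- is false for general $z\in H^1$, because every element of $S_h^0$ vanishes at $x=\pm L$. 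Concretely, for $z\equiv 1$: any polynomial $p$ of degree at most $r-1$ on the boundary element $[L-h,L]$ with $p(L)=0$ satisfies $\|1-p\|_{L^2(L-h,L)}\geq c_r h^{1/2}$ (equivalence of norms on a finite-dimensional reference space, then scaling), so $\inf_{\chi\in S_h^0}\|1-\chi\|\gtrsim h^{1/2}$, not $O(h)$. Your fallback does not repair this: Lemma \ref{lem:ellipt} defines $R_h^0$ only on $H^1_0$, and no operator into $S_h^0$ can beat the $h^{1/2}$ barrier on functions with nonzero boundary values. With only $\|z-\chi\|\lesssim h^{1/2}\|z\|_1$ available, your inverse-inequality step \eqref{eq:inversineq} yields $\|P_h^0z\|_1\lesssim h^{-1/2}\|z\|_1$, which is not enough. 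Since the paper defines $\|\cdot\|_{-1}$ as a supremum over \emph{all} of $H^1$ (constants included), your argument covers only the part of that supremum coming from $z\in H^1_0$.

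You should also know that this is not a hole you can patch with a cleverer argument: the $H^1$-stability of $P_h^0$ is genuinely false on $H^1\setminus H^1_0$. Indeed $\|1-P_h^01\|\leq\inf_{\chi\in S_h^0}\|1-\chi\|\lesssim h^{1/2}\to 0$, so if $\|P_h^01\|_1$ stayed bounded along a sequence $h\to0$, a subsequence would converge weakly in $H^1$ (hence uniformly on $[-L,L]$) to $1$, contradicting $P_h^01(\pm L)=0$; for piecewise linear elements an explicit computation of the boundary layer gives $\|(P_h^01)_x\|\sim h^{-1/2}$. So you isolated the delicate point correctly, but it is delicate for a stronger reason than you suppose: the needed inequality holds only when the boundary behaviour of $z$ matches that built into $S_h^0$, i.e.\ for $z\in H^1_0$, which is also the setting in which the stability result of \cite{CT1987} invoked by the paper applies. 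In fact, taking $u=P_h^01\in S_h^0\subset H^1_0$ and testing with $z\equiv1$ gives $(\partial_h^2u,1)=(\partial_h^2u,P_h^01)=-\|(P_h^01)_x\|^2$, so $\|\partial_h^2u\|_{-1}/\|u_x\|\geq\|(P_h^01)_x\|/\sqrt{2L}\to\infty$: with the supremum taken over all of $H^1$, the asserted estimate cannot hold with a constant independent of $h$, and both your argument and the paper's become correct precisely when the supremum in $\|\cdot\|_{-1}$ is restricted to $z\in H^1_0$ --- in that setting your interpolant-plus-inverse-inequality proof of the stability is a complete and standard alternative to the citation.
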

\begin{proof}
Let $0\not=w\in H^1$. Then, we have  
$$\frac{(\partial_h^2 u,w)}{\|w\|_1}=\frac{(\partial_h^2 u,P_h^0w)}{\|w\|_1}=\frac{-(u_x,(P_h^0w)_x)}{\|w\|_1}\leq \frac{\|u_x\|~\|P_h^0w\|_1}{\|w\|_1}\leq C\|u_x\|\ ,$$
where $P_h^0$ denotes the $L^2$-projection into $S_h^0$. In the last inequality, we used the stability of the $L^2$-projection in $H^1$, see \cite{CT1987}. Taking the supremum over all $w\in H^1$, $w\not=0$, leads to the desired inequality.
\end{proof}

Also, we can show the following.
\begin{lemma}\label{lem:reguldiscl}
For all $\psi\in S_h^0$ we have $\|\partial_h^2 g_h(\psi)\|\lesssim \|\psi_x\|$.
\end{lemma}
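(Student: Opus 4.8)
The plan is to reduce everything to the defining variational identities for $g_h$ and $\partial_h^2$, combined with two energy-type tests against carefully chosen functions in $S_h^0$. Write $v:=g_h(\psi)\in S_h^0$, so that by definition $A_d(v,\phi)=(v,\phi)+d(v_x,\phi_x)=(\psi,\phi_x)$ for every $\phi\in S_h^0$.

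First I would obtain an a priori bound on $v$ itself. Testing the identity above with $\phi=v$ and integrating by parts on the right-hand side — which is legitimate since both $\psi$ and $v$ lie in $S_h^0$ and therefore vanish at $\pm L$ — gives $\|v\|^2+d\|v_x\|^2=(\psi,v_x)=-(\psi_x,v)\le\|\psi_x\|\,\|v\|$. This immediately yields $\|v\|\lesssim\|\psi_x\|$ (and, as a by-product, $\|v_x\|\lesssim\|\psi_x\|$). The point of this integration by parts is precisely to replace the naive bound $\|v\|\lesssim\|\psi\|$ by one in terms of $\|\psi_x\|$, as the statement demands.

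Next I would rewrite $\partial_h^2 v$ by using the two definitions simultaneously. Since $v\in S_h^0\subset H^1_0$, the discrete Laplacian satisfies $(\partial_h^2 v,\phi)=-(v_x,\phi_x)$ for all $\phi\in S_h^0$; on the other hand the defining relation for $v$ gives $d(v_x,\phi_x)=(\psi,\phi_x)-(v,\phi)$. Combining the two produces the clean identity $(\partial_h^2 v,\phi)=\tfrac1d\big[(v,\phi)-(\psi,\phi_x)\big]$ for all $\phi\in S_h^0$. Choosing $\phi=\partial_h^2 v\in S_h^0$ and integrating by parts once more in the term $(\psi,(\partial_h^2 v)_x)=-(\psi_x,\partial_h^2 v)$ — again the boundary contributions vanish because $\psi$ and $\partial_h^2 v$ both belong to $S_h^0$ — leads to $\|\partial_h^2 v\|^2=\tfrac1d\big(v+\psi_x,\partial_h^2 v\big)\le\tfrac1d\big(\|v\|+\|\psi_x\|\big)\|\partial_h^2 v\|$, so that $\|\partial_h^2 v\|\le\tfrac1d\big(\|v\|+\|\psi_x\|\big)$.

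Finally I would combine the two estimates: inserting $\|v\|\lesssim\|\psi_x\|$ from the first step into the last inequality gives $\|\partial_h^2 g_h(\psi)\|=\|\partial_h^2 v\|\lesssim\|\psi_x\|$, with a constant depending only on $d$ and not on $h$, which is the claim. The only slightly delicate points are the two integrations by parts, and both are justified solely by the fact that every function in sight lies in $S_h^0$ and hence vanishes at the endpoints; I do not expect any genuine obstacle beyond this bookkeeping, and in particular neither an inverse inequality nor any $h$-dependent constant is needed.
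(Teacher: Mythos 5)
Your proof is correct, and it takes a genuinely different route from the paper's. You work entirely at the discrete level: testing the defining identity $A_d(g_h(\psi),\phi)=(\psi,\phi_x)$ with $\phi=g_h(\psi)$ (plus one integration by parts, legitimate since both functions vanish at $\pm L$) to get $\|g_h(\psi)\|\lesssim\|\psi_x\|$, then deriving the identity $(\partial_h^2 v,\phi)=\tfrac1d\left[(v,\phi)-(\psi,\phi_x)\right]$ and testing it with $\phi=\partial_h^2 v\in S_h^0$. The paper instead introduces an auxiliary \emph{continuous} two-point boundary value problem $w-|a|w_{xx}=-\psi_x$, $w(\pm L)=0$, invokes elliptic regularity $\|w\|_2\lesssim\|\psi_x\|$, identifies $g_h(\psi)$ with the elliptic projection $R_h^0w$, and then bounds $\|\partial_h^2 R_h^0 w\|$ by expanding the bilinear form and using the projection property. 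Your argument buys simplicity and self-containedness: no elliptic regularity, no elliptic projection, and an explicit constant ($2/d$) manifestly independent of $h$; it also works verbatim with the coefficient $d$ appearing in the paper's actual definitions of $g_h$ and $R_h^0$, whereas the paper's proof switches to the form $A_{|a|}$ (an apparent inconsistency with those definitions, harmless but requiring the reader to substitute $d$ for $|a|$ throughout). What the paper's detour buys is structural: the identification $g_h(\psi)=R_h^0 w$ embeds the lemma in the same elliptic-projection machinery used in the convergence analysis (Proposition~\ref{prop:errorest}) and in Lemma~\ref{lem:ellipt}, so the same mechanism extends to bounding $\partial_h^2 R_h^0$ applied to solutions of related continuous problems, not just to elements of $S_h^0$.
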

\begin{proof}
Let $\chi\in S_h^0$ and consider the problem
$$
\begin{aligned}
&w-|a|w_{xx}=-\psi_x, \quad x\in (-L,L)\ ,\\
&w(-L)=w(L)=0\ .
\end{aligned}
$$
From the theory of elliptic equations, we have $\|w\|_2\leq \|\psi_x\|$. Moreover,  
$$
A_{|a|}(R_h^0w,\chi) =A_{|a|}(w,\chi) =-(\psi_x,\chi) =(\psi,\chi_x) = A_{|a|}(g_h(\psi),\chi)\ ,
$$
which implies $R_h^0w=g_h(\psi)$. Note that
$$\begin{aligned}
\|\partial_h^2 g_h(\psi)\|^2&=\|\partial_h^2 R_h^0w\|^2=(\partial_h^2 R_h^0 w,\partial_h^2 R_h^0 w)=-((R_h^0 w)_x, (\partial_h^2 R_h^0 w)_x)\\
&=-\tfrac{1}{|a|}\left[(R_h^0w,\partial_h^2 R_h^0 w)+|a|((R_h^0 w)_x, (\partial_h^2 R_h^0 w)_x)-(R_h^0w,\partial_h^2 R_h^0 w)\right]\\
&=-\tfrac{1}{|a|}\left[ A_{|a|}(R_h^0w,\partial_h^2 R_h^0w)-(R_h^0w,\partial_h^2 R_h^0 w)\right]
=-\tfrac{1}{|a|}\left[A_{|a|}(w,\partial_h^2R_h^0w)-(R_h^0w,\partial_h^2 R_h^0 w)\right]\\
&= -\tfrac{1}{|a|}\left[(w,\partial_h^2R_h^0w)-|a|(w_{xx},\partial_h^2R_h^0w)-(R_h^0w,\partial_h^2R_h^0w)\right]\\
&\lesssim (\|w\|+\|w_{xx}\|+\|R_h^0w\|)\|\partial_h^2R_h^0w\|\lesssim \|\psi_x\|\|\partial_h^2R_h^0w\|\ .
\end{aligned}
$$
Therefore, $\|\partial_h^2g_h(\psi)\|\lesssim \|\psi_x\|$ and the proof is complete.
\end{proof}

Note that system \eqref{eq:semidiscrete} can be written in the form
\begin{equation}\label{eq:discode}
\tilde{\eta}_t=f_h[\tilde{u}+ \tilde{\eta}\tilde{u}+a \partial_h^2\tilde{u}]\ ,
\quad
\tilde{u}_t=g_h[\tilde{\eta}+\tfrac{1}{2}\tilde{u}^2]\ ,
\end{equation}
or, in a more compact form, $H_t=F(H)$, where $H=(\tilde{\eta},\tilde{u})^T$ and $F(H)=(f_h[\tilde{u}+ \tilde{\eta}\tilde{u}+a \partial_h^2\tilde{u}],g_h[\tilde{\eta}+\tfrac{1}{2}\tilde{u}^2])^T$. Using Lemma \ref{lem:stability} and the inverse inequality \eqref{eq:inversineq}, it is easy to check that for fixed $h>0$ the function $F:S_h\times S_h^0\to S_h\times S_h^0$ is Lipschitz continuous in the norm $\sqrt{\|\chi\|^2+\|\phi\|_1^2}$ for $(\chi,\phi)\in S_h\times S_h^0$. Thus, there is a maximal time $t_h>0$ such that the system \eqref{eq:discode} with initial conditions $(\tilde{\eta}(0),\tilde{u}(0))=(R_h\eta_0,R_h^0 u_0)$ has a unique solution $(\tilde{\eta},\tilde{u})\in S_h\times S_h^0$ for all $t\in [0,t_h]$.

Similarly, for the solution of   problem \eqref{eq:Nwogu3nd} we have  that for any $\chi\in S_h$
$$
(\eta_t,\chi)+b(\eta_{xt},\chi_x)= (u+\eta u+a u_{xx},\chi_x)=A_b(f_h[u+ \eta u+a u_{xx}],\chi)\ ,
$$
which implies
$A_b(\eta_t,\chi)=A_b(f_h[u+ \eta u+a u_{xx}],\chi)$.
Along the same lines, for all $\phi\in S_h^0$ we have
$$
A_d(u_t,\phi) = (\eta+\tfrac{1}{2}u^2,\phi_x)= A_d(g_h[\eta+\tfrac{1}{2}u^2],\phi)\ .
$$
Therefore, we obtain
\begin{equation}\label{eq:contdisc}
R_h\eta_t=f_h[u+ \eta u+a u_{xx}]\ ,
\quad R_h^0 u_t=g_h[\eta+\tfrac{1}{2}u^2]\ .
\end{equation}

We are now ready to prove the main result of this section.
\begin{proposition}\label{prop:errorest}
For $r\geq 2$, $0\leq \mu\leq r-2$ and $t\in[0,T]$, there is a constant $C$ independent of $h$ such that the solution $(\tilde{\eta},\tilde{u})\in S_h(\mu,r)\times S_h^0(\mu,r)$ of the semidiscrete problem \eqref{eq:semidiscrete} converges to the exact solution of system  \eqref{eq:Nwogu3nd} as $h\to 0$, and satisfies the a priori error estimates 
$\|\tilde{\eta}-\eta\|+\|\tilde{u}-u\|\leq Ch^r$
and 
$\|\tilde{\eta}-\eta\|_{1}+\|\tilde{u}-u\|_{1}\leq Ch^{r-1}$.
\end{proposition}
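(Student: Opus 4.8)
The plan is to carry out a standard Galerkin error analysis based on splitting the error through the elliptic projections $R_h,R_h^0$ and controlling the resulting discrete error by a Gr\"onwall argument, the only genuinely delicate point being the modified term $a\partial_h^2\tilde u$. I would set $\theta := \tilde\eta - R_h\eta$, $\xi := \tilde u - R_h^0 u$, $\rho := \eta - R_h\eta$, $\sigma := u - R_h^0 u$, so that $\tilde\eta - \eta = \theta - \rho$ and $\tilde u - u = \xi - \sigma$. By Lemma \ref{lem:ellipt} the projection errors are optimal, $\|\rho\|_k + \|\sigma\|_k\lesssim h^{r-k}$ for $k=0,1$ (using the smoothness of the exact solution guaranteed by Proposition \ref{prop:smoothsol}), so it remains to bound $\theta,\xi$. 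Subtracting the projected equations \eqref{eq:contdisc} from the semidiscrete system \eqref{eq:discode} and using linearity of $f_h,g_h$ gives the error equations $\theta_t = f_h[(\tilde u - u) + (\tilde\eta\tilde u - \eta u) + a(\partial_h^2\tilde u - u_{xx})]$ and $\xi_t = g_h[(\tilde\eta - \eta) + \tfrac12(\tilde u^2 - u^2)]$, together with the crucial data $\theta(0)=\xi(0)=0$ since $\tilde\eta(0)=R_h\eta_0$ and $\tilde u(0)=R_h^0 u_0$.

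The heart of the argument, and the main obstacle, is estimating $\theta_t$ in $L^2$ so that $a\partial_h^2\tilde u$ costs no power of $h$. I would write $\partial_h^2\tilde u - u_{xx} = \partial_h^2\xi + (\partial_h^2 R_h^0 u - P_h^0 u_{xx}) + (P_h^0 u_{xx} - u_{xx})$ and use the discrete-Laplacian consistency identity $\partial_h^2 R_h^0 u - P_h^0 u_{xx} = \tfrac1d(R_h^0 u - P_h^0 u)$ together with the optimal $L^2$-projection estimates (here $u_{xx}(\pm L)=0$ is used so that $u_{xx}$ may be projected by $P_h^0$) to see that the last two groups are $O(h^r)$ in $L^2$; stability property (i) of $f_h$ then bounds their $f_h$-image by $Ch^r$. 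For the remaining piece $a\partial_h^2\xi\in S_h^0$, I would invoke stability property (ii) of $f_h$, namely $\|f_h[\psi]\|\lesssim\|\psi\|_{-1}$, combined with Lemma \ref{lem:stability2}, $\|\partial_h^2\xi\|_{-1}\lesssim\|\xi_x\|$, to obtain $\|f_h[a\partial_h^2\xi]\|\lesssim\|\xi_x\|$ \emph{with no inverse power of $h$}. This is exactly where the modified method pays off: a naive bound $\|\partial_h^2\xi\|\lesssim h^{-2}\|\xi\|$, or integration by parts of $(\partial_h^2\xi,\theta_x)$, would inject factors $h^{-1}$ or $h^{-2}$ and destroy optimality.

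For the nonlinear terms I would decompose $\tilde\eta\tilde u - \eta u = (\tilde\eta-\eta)\tilde u + \eta(\tilde u - u)$ and $\tilde u^2 - u^2 = (\tilde u - u)(\tilde u + u)$, so that only $\|\tilde u\|_\infty$ (besides the bounded norms of the exact solution) enters. This is handled by a continuation/bootstrap argument: on a maximal subinterval where $\|\xi\|_1\leq 1$ one uses the Sobolev embedding $H^1\hookrightarrow L^\infty$ and the $L^\infty$-projection estimates to get $\|\tilde u\|_\infty\lesssim 1$, and the final bound $\|\xi\|_1\lesssim h^r$ derived below is strictly stronger than this assumption for $h$ small, so the subinterval exhausts $[0,T]$ and, in particular, the semidiscrete solution exists up to $T$. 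Collecting these, $\|\theta_t\|\lesssim \|\theta\| + \|\xi\|_1 + h^r$, while the estimate for $\xi_t$ is easier: stability property (i) of $g_h$ gives $\|\xi_t\|_1\lesssim\|\tilde\eta-\eta\| + \|\tilde u^2 - u^2\|\lesssim \|\theta\| + \|\xi\| + h^r$.

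Finally, since $\theta(0)=\xi(0)=0$, I would integrate in time, $\|\theta(t)\|\leq\int_0^t\|\theta_t\|\,d\tau$ and $\|\xi(t)\|_1\leq\int_0^t\|\xi_t\|_1\,d\tau$, and set $Y(t):=\|\theta(t)\| + \|\xi(t)\|_1$ to reach $Y(t)\lesssim\int_0^t Y\,d\tau + h^r$; Gr\"onwall then yields $Y(t)\lesssim h^r$ on $[0,T]$, i.e. $\|\theta\|\lesssim h^r$ and $\|\xi\|_1\lesssim h^r$. Combining with the projection errors gives $\|\tilde\eta-\eta\| + \|\tilde u - u\|\leq\|\theta\| + \|\rho\| + \|\xi\| + \|\sigma\|\lesssim h^r$, the first estimate. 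For the second, I would control $\|\theta\|_1$ by the inverse inequality \eqref{eq:inversineq}, $\|\theta\|_1\leq C_0 h^{-1}\|\theta\|\lesssim h^{r-1}$, whence $\|\tilde\eta-\eta\|_1\leq\|\theta\|_1 + \|\rho\|_1\lesssim h^{r-1}$ and $\|\tilde u - u\|_1\leq\|\xi\|_1 + \|\sigma\|_1\lesssim h^{r-1}$, completing the proof.
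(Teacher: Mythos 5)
Your proposal is correct and follows essentially the same route as the paper's proof: the same splitting through the elliptic projections, the same error equations, and crucially the same treatment of the term $a\partial_h^2\tilde u$ via the decomposition through $\partial_h^2 R_h^0 u$, the stability bound $\|f_h[\psi]\|\lesssim\|\psi\|_{-1}$ combined with Lemma~\ref{lem:stability2}, and the discrete-Laplacian identity plus $L^2$-projection estimates, followed by Gr\"onwall. The only (cosmetic) differences are that you run the continuation argument through an $H^1$ bootstrap and Sobolev embedding for $\tilde u$ alone, whereas the paper bounds $\|\tilde\eta\|_{L^\infty}$ and $\|\tilde u\|_{L^\infty}$ with the inverse inequality $\|\chi\|_{L^\infty}\lesssim h^{-1/2}\|\chi\|$ and extends $t_h$ to $T$ by contradiction, and that you spell out the $H^1$ estimate via the inverse inequality, which the paper leaves implicit.
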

\begin{proof}
Consider the quantities
$\theta=\tilde{\eta}-R_h\eta$, $\rho=R_h\eta-\eta$, $\zeta=\tilde{u}-R_h^0u$, $\xi = R_h^0u-u$, 
$e_\eta=\tilde{\eta}-\eta=\theta+\rho$, $e_u=\tilde{u}-u=\zeta+\xi$.
Combining systems \eqref{eq:discode} and \eqref{eq:contdisc}, we have

\begin{equation}\label{eq:sys1}
\begin{aligned}
\theta_t &= f_h[(\tilde{u}-u)+(\tilde{\eta}\tilde{u}-\eta u)+a(\partial_h^2 \tilde{u}-u_{xx})]\ ,
\\
\zeta_t &= g_h[\tilde{\eta}-\eta+\tfrac{1}{2}(\tilde{u}^2-u^2)]\ .
\end{aligned}
\end{equation}
Noting that
\begin{align*}
&\tilde{\eta}\tilde{u}-\eta u=\tilde{\eta}(\zeta+\xi)+u(\theta+\rho)=-[\eta(\zeta+\xi)+u(\theta+\rho)+(\theta+\rho)(\zeta+\xi)]\ ,
\\
&\tilde{u}^2-u^2=\tilde{u}(\zeta+\xi)+u(\zeta+\xi)=(\zeta+\xi)^2+2u(\zeta+\xi)\ , 
\end{align*}
we rewrite system \eqref{eq:sys1} as
$$
\begin{aligned}
\theta_t &= f_h[\zeta+\xi-[\eta(\zeta+\xi)+u(\theta+\rho)+(\theta+\rho)(\zeta+\xi)]+a(\partial_h^2 \tilde{u}-u_{xx})] \ ,\\
\zeta_t &= g_h[\theta+\rho+\tilde{u}(\zeta+\xi)+u(\zeta+\xi)]\ .
\end{aligned}
$$
Then, employing Lemmata \ref{lem:ellipt}, \ref{lem:reguldiscl} we find
$$
\begin{aligned}
\|\theta_t\| &\leq \|f_h[\zeta+\xi]\|+\|f_h[\eta(\zeta+\xi)]\|+\|f_h[u(\theta+\rho)]\|+\|f_h[(\theta+\rho)(\zeta+\xi)]\|
+\|f_h[a(\partial_h^2\tilde{u}-u_{xx})]\|\\
&\lesssim \|\zeta+\xi\|+\|\eta(\zeta+\xi)\|+\|u(\theta+\rho)\|+\|(\theta+\rho)(\zeta+\xi)\|
+\|f_h[a(\partial_h^2\tilde{u}-\partial_h^2R_h^0u+\partial_h^2R_h^0u-u_{xx})\|\\
&\lesssim \|\zeta+\xi\|+\|\eta(\zeta+\xi)\|+\|u(\theta+\rho)\|+\|(\theta+\rho)(\zeta+\xi)\|
+\|\partial_h^2(\tilde{u}-R_h^0u)\|_{-1}
+\|\partial_h^2R_h^0u-u_{xx}\|\\
& \lesssim \|\zeta\|+\|\xi\|+\|\theta\|+\|\rho\|+\|\tilde{u}-R_h^0u\|_1+\|\partial_h^2R_h^0u-u_{xx}\|\\
& \lesssim \|\zeta\|+\|\xi\|+\|\theta\|+\|\rho\|+\|\zeta\|_1+\|P_h[u_{xx}]-u_{xx}\|+\|R_h^0u-P_hu\|\\
&\lesssim h^r+\|\theta\|+\|\zeta\|_1\ .
\end{aligned}
$$
Working similarly for $\zeta_t$ but using the $H^1$-norm, we obtain
$$
\begin{aligned}
\|\zeta_t\|_1 &\leq \|g_h[\theta+\rho]\|_1+\|g_h[\tilde{u}(\zeta+\xi)]\|+\|g_h[u(\zeta+\xi)]\|\\
&\lesssim \|\theta\|+\|\rho\|+\|\zeta\|+\|\xi\|\lesssim h^r +\|\theta\|+\|\zeta\|\ .
\end{aligned}
$$
The  two inequalities for $\|\theta_t\|$ and $\|\zeta_t\|_1$ combine to imply
$$
\frac{1}{2}\frac{d}{dt}(\|\theta\|^2+\|\zeta\|^2_1)\leq \|\theta\|\|\theta_t\|+\|\zeta\|_1\|\zeta_t\|_1 \lesssim h^{2r}+\|\theta\|^2+\|\zeta\|_1^2
$$
from which we obtain via  Gr\"onwall's inequality that $\|\theta\|+\|\zeta\|_1\lesssim h^r$ for $0\leq t\leq t_h$ and, subsequently, 
$$\|\tilde{\eta}-\eta\|+\|\tilde{u}-u\|\lesssim h^r\ .$$
Furthermore, for $t\leq t_h$ we have  
$$
\|\tilde{\eta}-\eta\|_{L^\infty}\leq \|\tilde{\eta}+R_h\eta\|_{L^\infty}+\|R_h\eta-\eta\|_{L^{\infty}}\lesssim h^{-1/2}\|\tilde{\eta}-R_h\eta\|+ h^r \lesssim h^{r-1/2},
$$
where we used the inverse inequality $\|\tilde{w}\|_{L^\infty}\leq Ch^{-1/2}\|\tilde{w}\|$ for all $\tilde{w}\in S_h$. Thus, for sufficiently small $h$, 
$\|\tilde{\eta}\|_{L^\infty}\leq \|\eta\|_{L^\infty}+\|\tilde{\eta}-\eta\|_{L^\infty}\leq Ch^{r-1/2}+M<2M$. Similar estimates  hold  true for the variable $\tilde{u}$, and so the maximal time of existence can be extended to $t_h=T$ through contradiction. 
\end{proof}

\subsection{Experimental validation}

We shall now verify the results of Proposition \ref{prop:errorest} by numerical means. Specifically, we consider the method of manufactured solutions for the initial-boundary value problem~\eqref{eq:Nwogu3nd} in the interval $(0,1)$ with $a=-1$, $b=d=1$, and exact solution 
$$\eta(x,t)=e^{2t}\cos(\pi x),\quad u(x,t)=e^t x^2 (x-1)^2\sin(\pi x)\ .$$
We tested Lagrange spaces $S_h(\mu,r)$ with $\mu=0$ and $r\geq 2,3$ and $4$, and also cubic splines with $\mu=2$ and $r=4$. For computation of the relative errors we used the standard $L^2$ and $H^1$ norms, where the integrals were approximated using Gauss-Legendre quadrature with 5 nodes. 
In that respect, for a stepsize $h=\Delta x$ we define the normalized numerical errors for the $L^2$ and $H^1$ norms as
$$
E_q(\eta; h)=\frac{\|\tilde{\eta}-\eta\|_q}{\|\eta\|_q}, \quad E_q(u; h)=\frac{\|\tilde{u}-u\|_q}{\|u\|_q}\ , \quad q=0,1\ .
$$
For two different stepsizes $h_1$ and $h_2$, we define the experimental convergence rate as the ratio
$$
\tilde{r}_q=\frac{\log(E_q(v;h_1))/\log(E_q(v;h_2))}{\log(h_1)/\log(h_2)}\ .
$$
The experimental convergence rate $\tilde{r}_0$ is expected to converge to the corresponding value of $r$. For the integration in time, we used the four-stage classical Runge-Kutta method of order four with very small stepsize $\Delta t$ in order to ensure that the corresponding errors are negligible compared to the errors due to the spatial discretization. All the computer codes were written in Fortran with double precision arithmetic. In Tables \ref{tab:t1}-\ref{tab:t4}, $h=\Delta x$ is the stepsize and $N=1/h$ is the number of elements used in each experiment. 

\begin{table}[ht!]
\centering
{\small
\begin{tabular}{c|cccc|cccc}
\hline
$N$ & $E_0(\eta;h)$ & $\tilde{r}_0$ &  $ E_0(u;h)$ & $\tilde{r}_0$ & $E_1(\eta;h)$ & $\tilde{r}_1$ &  $ E_1(u;h)$ & $\tilde{r}_1$ \\
\hline
$ 10  $ & $ 7.4985\times 10^{-3} $ &  --  & $ 5.7527\times 10^{-2} $ &  --   & $ 8.6349\times 10^{-2} $ &  --  &       $ 1.9031\times 10^{-1} $ &  --   \\  
$ 20  $ & $ 1.8713\times 10^{-3} $ & $ 2.0025 $ & $ 1.4451\times 10^{-2} $ &  $ 1.9930 $  & $ 4.3198\times 10^{-2} $ & $ 0.9992 $ & $ 9.4663\times 10^{-2} $ &  $ 1.0075 $  \\
$ 40  $ & $ 4.6769\times 10^{-4} $ & $ 2.0005 $ & $ 3.6176\times 10^{-3} $ &  $ 1.9981 $  & $ 2.1602\times 10^{-2} $ & $ 0.9997 $ & $ 4.7283\times 10^{-2} $ &  $ 1.0015 $  \\
$ 80  $ & $ 1.1691\times 10^{-4} $ & $ 2.0001 $ & $ 9.0471\times 10^{-4} $ &  $ 1.9995 $  & $ 1.0802\times 10^{-2} $ & $ 0.9999 $ & $ 2.3635\times 10^{-2} $ &  $ 1.0003 $  \\
$ 160 $ & $ 2.9228\times 10^{-5} $ & $ 2.0000 $ & $ 2.2619\times 10^{-4} $ &  $ 1.9999 $  & $ 5.4010\times 10^{-3} $ & $ 1.0000 $ & $ 1.1817\times 10^{-2} $ &  $ 1.0001 $  \\
$ 320 $ & $ 7.3071\times 10^{-6} $ & $ 2.0000 $ & $ 5.6550\times 10^{-5} $ &  $ 2.0000 $  & $ 2.7005\times 10^{-3} $ & $ 1.0000 $ & $ 5.9085\times 10^{-3} $ &  $ 1.0000 $  \\
$ 640 $ & $ 1.8267\times 10^{-6} $ & $ 2.0000 $ & $ 1.4137\times 10^{-5} $ &  $ 2.0000 $ & $ 1.3502\times 10^{-3} $ & $ 1.0000 $ & $ 2.9542\times 10^{-3} $ &  $ 1.0000 $ \\
\hline
\end{tabular}
}
\caption{\!$L^2$ and $H^1$ errors and the corresponding experimental convergence rates for Lagrange linear elements $S_h(0,2)$.}\label{tab:t1}
%
\vspace*{3mm}
{\small
\begin{tabular}{c|cccc|cccc}
\hline
$N$ & $E_0(\eta;h)$ & $\tilde{r}_0$ &  $ E_0(u;h)$ & $\tilde{r}_0$ & $E_1(\eta;h)$ & $\tilde{r}_1$ &  $ E_1(u;h)$ & $\tilde{r}_1$ \\
\hline
$ 10  $ & $ 1.7877\times 10^{-4} $ &  --  &       $ 1.3838\times 10^{-3} $ &  --   & $ 3.5035\times 10^{-3} $ &  --  &       $ 2.1586\times 10^{-2} $ &  --   \\  
$ 20  $ & $ 2.2304\times 10^{-5} $ & $ 3.0027 $ & $ 1.7764\times 10^{-4} $ &  $ 2.9616 $  & $ 8.7624\times 10^{-4} $ & $ 1.9994 $ & $ 5.5378\times 10^{-3} $ &  $ 1.9627 $  \\  
$ 40  $ & $ 2.7865\times 10^{-6} $ & $ 3.0008 $ & $ 2.2347\times 10^{-5} $ &  $ 2.9908 $  & $ 2.1906\times 10^{-4} $ & $ 2.0000 $ & $ 1.3930\times 10^{-3} $ &  $ 1.9911 $  \\  
$ 80  $ & $ 3.4826\times 10^{-7} $ & $ 3.0002 $ & $ 2.7978\times 10^{-6} $ &  $ 2.9977 $  & $ 5.4764\times 10^{-5} $ & $ 2.0000 $ & $ 3.4879\times 10^{-4} $ &  $ 1.9978 $  \\  
$ 160 $ & $ 4.3531\times 10^{-8} $ & $ 3.0001 $ & $ 3.4987\times 10^{-7} $ &  $ 2.9994 $  & $ 1.3691\times 10^{-5} $ & $ 2.0000 $ & $ 8.7232\times 10^{-5} $ &  $ 1.9994 $  \\  
$ 320 $ & $ 5.4413\times 10^{-9} $ & $ 3.0000 $ & $ 4.3738\times 10^{-8} $ &  $ 2.9999 $  & $ 3.4227\times 10^{-6} $ & $ 2.0000 $ & $ 2.1810\times 10^{-5} $ &  $ 1.9999 $  \\  
$ 640 $ & $ 6.8018\times 10^{-10}$ & $ 3.0000 $ & $ 5.4675\times 10^{-9} $ &  $ 2.9999 $ & $ 8.5569\times 10^{-7} $ & $ 2.0000 $ & $ 5.4526\times 10^{-6} $ &  $ 2.0000 $ \\
\hline
\end{tabular}
}
\caption{$L^2$ and $H^1$ errors and the corresponding experimental convergence rates for Lagrange quadratic elements $S_h(0,3)$.}\label{tab:t2}
%
%
\vspace*{3mm}
{\small
\begin{tabular}{c|cccc|cccc}
\hline
$N$ & $E_0(\eta;h)$ & $\tilde{r}_0$ &  $ E_0(u;h)$ & $\tilde{r}_0$ & $E_1(\eta;h)$ & $\tilde{r}_1$ &  $ E_1(u;h)$ & $\tilde{r}_1$ \\
\hline
$ 10  $ & $ 3.0305\times 10^{-6} $ &  --  &       $ 8.8820\times 10^{-5} $ &  --   & $ 9.3259\times 10^{-5} $ &  --  &       $ 2.0256\times 10^{-3} $ &  --   \\  
$ 20  $ & $ 1.8939\times 10^{-7} $ & $ 4.0001 $ & $ 5.5572\times 10^{-6} $ &  $ 3.9985 $ & $ 1.1655\times 10^{-5} $ & $ 3.0002 $ & $ 2.5351\times 10^{-4} $ &  $ 2.9982 $  \\  
$ 40  $ & $ 1.1838\times 10^{-8} $ & $ 3.9999 $ & $ 3.4742\times 10^{-7} $ &  $ 3.9996 $ & $ 1.4570\times 10^{-6} $ & $ 2.9999 $ & $ 3.1699\times 10^{-5} $ &  $ 2.9995 $  \\  
$ 80  $ & $ 7.3993\times 10^{-10} $ &$ 3.9999 $ & $ 2.1715\times 10^{-8} $ &  $ 3.9999 $ & $ 1.8214\times 10^{-7} $ & $ 3.0000 $ & $ 3.9628\times 10^{-6} $ &  $ 2.9999 $  \\  
$ 160 $ & $ 4.6402\times 10^{-11} $ &$ 3.9951 $ & $ 1.3575\times 10^{-9} $ &  $ 3.9996 $  & $ 2.2767\times 10^{-8} $ & $ 3.0000 $ & $ 4.9536\times 10^{-7} $ &  $ 3.0000 $  \\  
\hline
\end{tabular}
}
\caption{$L^2$ and $H^1$ errors and the corresponding experimental convergence rates Lagrange cubic elements $S_h(0,4)$.}\label{tab:t3}
\end{table}
\begin{table}[ht!]
\vspace*{3mm}
{\small 
\begin{tabular}{c|cccc|cccc}
\hline
$N$ & $E_0(\eta;h)$ & $\tilde{r}_0$ &  $ E_0(u;h)$ & $\tilde{r}_0$ & $E_1(\eta;h)$ & $\tilde{r}_1$ &  $ E_1(u;h)$ & $\tilde{r}_1$ \\
\hline
$ 10  $ & $ 3.2731\times 10^{-4} $ &  --  &       $ 6.7353\times 10^{-5} $ &  --          & $ 2.0410\times 10^{-2} $ &  --  &       $ 4.0162\times 10^{-3} $ &  --   \\  
$ 20  $ & $ 2.0717\times 10^{-5} $ & $ 3.9818 $ & $ 4.0005\times 10^{-6} $ &  $ 4.0735 $  & $ 2.6131\times 10^{-3} $ & $ 2.9654 $ & $ 4.9751\times 10^{-4} $ &  $ 3.0130 $  \\  
$ 40  $ & $ 1.3119\times 10^{-6} $ & $ 3.9810 $ & $ 2.4904\times 10^{-7} $ &  $ 4.0057 $  & $ 3.3180\times 10^{-4} $ & $ 2.9774 $ & $ 6.2746\times 10^{-5} $ &  $ 2.9872 $  \\  
$ 80  $ & $ 8.2686\times 10^{-8} $ & $ 3.9880 $ & $ 1.5640\times 10^{-8} $ &  $ 3.9930 $  & $ 4.1841\times 10^{-5} $ & $ 2.9873 $ & $ 7.9071\times 10^{-6} $ &  $ 2.9883 $  \\  
$ 160 $ & $ 5.1918\times 10^{-9} $ & $ 3.9933 $ & $ 9.8173\times 10^{-10} $ &  $ 3.9938 $  & $ 5.2545\times 10^{-6} $ & $ 2.9933 $ & $ 9.9334\times 10^{-7} $ &  $ 2.9928 $  \\  
$ 320 $ & $ 3.2535\times 10^{-10} $ & $ 3.9962 $ & $ 6.1521\times 10^{-11} $ & $ 3.9962 $  & $ 6.5837\times 10^{-7} $ & $ 2.9966 $ & $ 1.2450\times 10^{-7} $ &  $ 2.9961 $  \\  
\hline
\end{tabular}
}
\caption{$L^2$ and $H^1$ errors and the corresponding experimental convergence rates for  cubic spline elements $S_h(2,4)$.}\label{tab:t4}
\end{table}

All of our  results confirm the theoretical, optimal error estimates that we proved in Section \ref{sec:estimates}. Note that in the case of cubic Lagrange elements (see Table \ref{tab:t3}) the method converges faster compared to the case of cubic splines, and the errors (especially those related to the $L^2$-norm) reach the limits of the numerical method quickly, resulting to very small errors of order $10^{-11}$ for $N\geq 160$.

\subsection{The effect of the boundary conditions}

We now explore the influence of the boundary conditions to the reflection of solitary waves on a vertical and impenetrable wall in comparison to other Boussinesq-type systems. In particular, we perform a series of numerical experiments for four different Boussinesq-type systems, namely, the regularized Nwogu system with $\theta^2=1/4$ and coefficients given by~\eqref{eq:coeffs2}, the classical Nwogu system, and the BBM-BBM \cite{BCS2002} and Serre \cite{serre} systems. For the BBM-BBM system, we use the conservative fully-discrete numerical method of \cite{MRKS2021}. For the Serre equations, we use the results of \cite{MSM2017}. For the Nwogu system, we test two numerical methods: (i) The numerical method of~\cite{WB1999} based on a modified Galerkin semidiscretization, and (ii) a new standard Galerkin method accompanied with the fourth-order, four-stage, classical Runge-Kutta method with stepsize $\Delta t=\Delta x/10$. Both numerical methods (i) and (ii) appear to converge with optimal convergence rate $4$ to the velocity $u$. For the free surface elevation, we obtained clear convergence rates only in the case of the standard Galerkin method (ii) and the rates were suboptimal and equal to $2$. The modified Galerkin method of (i) appears to have better errors in $L^2$ and $H^1$ compared to (ii). Both methods deserve further exploration, and this is reserved as a topic for future work. 

\begin{figure}[ht!] 
\centering 
\includegraphics[width=0.8\columnwidth]{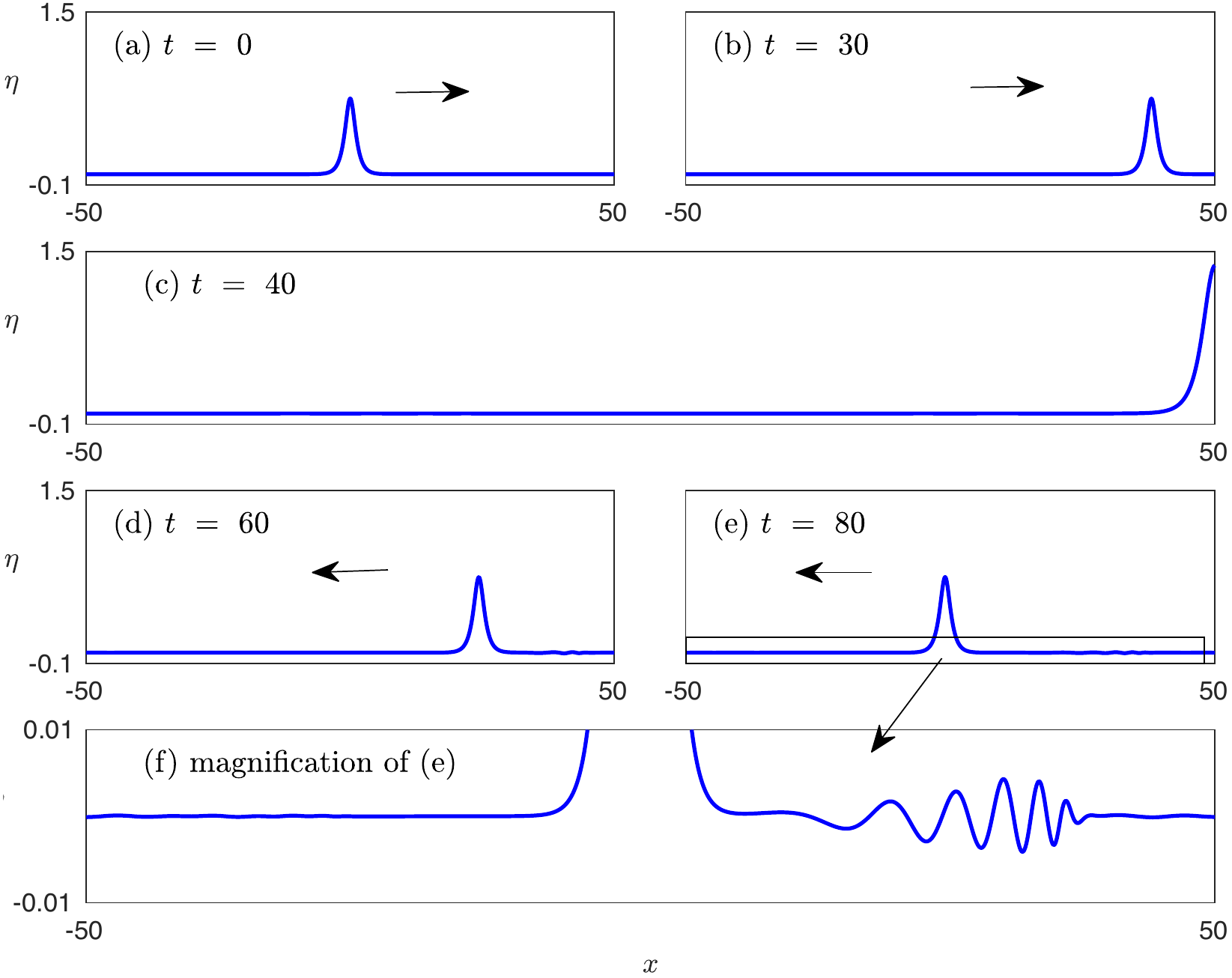}
\caption{Reflection of a solitary wave with amplitude $A\approx 0.7$ by a vertical wall for the Nwogu system.}
\label{fig:figure0}
%
\vspace*{5mm}
\includegraphics[width=0.7\columnwidth]{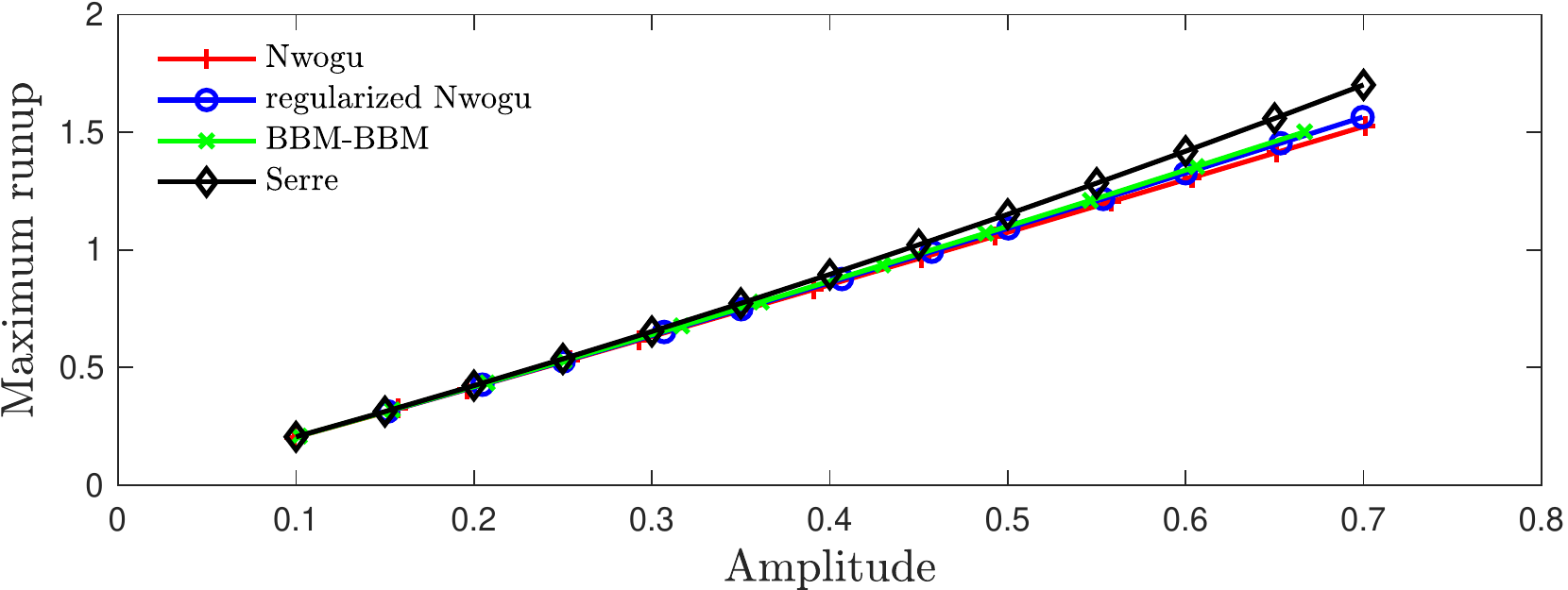}
\caption{Maximum runup of solitary waves on a vertical wall. Comparison between the Nwogu, BBM-BBM and Serre systems of equations.}
\label{fig:figure1}
\end{figure}

In order to study the influence of the boundary conditions on the reflection of solitary waves, we consider solitary waves of amplitudes approximately $0.1,0.15,0.2,\ldots,0.7$ initially placed symmetrically around $x=0$ in the interval $[-50,50]$. While these solitary waves propagate towards the  wall located at $x=50$, we allow quite large time intervals of numerical integration so as to study their complete reflection by the wall. One such reflection is depicted in Figure \ref{fig:figure0}. As there are no exact formulas for solitary waves of the Boussinesq systems of either Nwogu or BBM-BBM type, we use numerically generated solitary waves. To generate solitary waves for the BBM-BBM system, we use the Petviashvilli method as it is described in \cite{MRKS2021}. For Nwogu's system, we use the Levenberg-Marquardt modification of Newton's algorithm to minimize the residuals to the equations resulted in pseudo-spectral discretizations of the respective ordinary differential equation. The numerical initial conditions are obtained via interpolation as $(\tilde{\eta}(0),\tilde{u}(0))\in S_h\times S_h^0$. This technique results in the computation of the initial profiles of the solitary waves with spectral accuracy.

Note that, as in other Boussinesq systems, the reflection of the solitary waves of the Nwogu system is not elastic \cite{AD2012}. In Figure \ref{fig:figure0}, we observe that, as a result of the inelastic collision, dispersive tails are generated following the solitary pulse. Also, the solitary pulse returns modified after the reflection since part of its energy has been transferred to the dispersive tails.

Figure \ref{fig:figure1} presents the maximum wave-height (runup on the wall) recorded during the reflection of the solitary waves. Observe that all three Boussinesq models describe the reflection in a similar manner, while all systems converge to the the same solutions as the amplitude becomes small. Apparently, the Nwogu system  is the least accurate compared to the other two systems. The Serre system is known to be the most accurate since it does not describe only small amplitude waves. The BBM-BBM system, which is well-posed only with the conditions $\eta_x(\pm L,t)=u(\pm L,t)=0$ on the boundary (the condition $u_{xx}(\pm L,t)=\text{constant}$ is implied), appears to have slightly better performance compared to the other two systems.

\section{Conclusions}
\label{conc-s}

A theoretical and numerical study of an initial-boundary value problem for regularized Nwogu-type Boussinesq systems with reflective boundary conditions was presented. The well-posedness of this problem requires, in addition to the standard wall-boundary conditions, a condition on the second derivative of the velocity on the boundary wall. These boundary conditions, which are also satisfied by the solutions of the Euler equations, were tested numerically against other Boussinesq systems. Furthermore, the above choice of boundary conditions was justified by deriving a novel solution formula for the linearized problem using the unified transform of Fokas. The numerical method, which is based on finite element methods, was shown to be convergent with optimal rates.



\appendix

\section{Justification of wall-boundary conditions in Boussinesq-type equations}\label{sec:appendix}
In this section, we show that the set of boundary conditions we use in this work is also satisfied by the solutions of the Euler equations in the same bounded domain with wall-boundary conditions. Some of the results of this section have been presented in \cite{Khakimzyanov2018a}; here, we repeat them for the sake of completeness and, furthermore, we  extend them  in order to cover the boundary conditions of the previous sections. 

Let $\Omega=\left\{(x,y): -L<x<L, -D\leq y\leq \eta(x,t)\right\}$ where $D$ denotes the depth of the ocean floor measured from the rest position (zero level) of the water's free surface. For $(x,y)\in\Omega$ and $t\geq 0$, the Euler equations of irrotational perfect fluid flow can be written in the form
\begin{align}
&u_t+uu_x+vu_y+\frac{1}{\rho} \, p_x=0\ ,
\quad
v_t+uv_x+vv_y+\frac{1}{\rho} \, p_y=-g\ ,\label{eq:mom}
\\
&u_x+v_y = 0\ ,\label{eq:mass}\\
&u_y-v_x=0\ ,\label{eq:irot}
\end{align}
where $u=u(x,y,t)$ and $v=v(x,y,t)$ are the horizontal and vertical components of the fluid velocity measured at $(x,y)\in \Omega$, $\rho$ denotes the density of the fluid, and $g$ the gravitational acceleration. Equations~\eqref{eq:mom} and \eqref{eq:mass} correspond to  momentum and  mass  conservation, respectively, while equation~\eqref{eq:irot} is the irrotationality condition. The impermeability of the ocean floor can be expressed by the condition $v(x,y,t)=0$ at $y=-D$, while the impenetrability of the wall is expressed by the boundary condition
\begin{equation}\label{eq:wallbcs}
u(-L,y,t)=u(L,y,t)=0\ .
\end{equation}
The free surface boundary conditions are written as
\begin{equation}\label{eq:pressbc}
\eta_t+u\eta_x=v \quad \text{and} \quad p(x,y,t)=p_\text{atm}\quad \text{at}\quad y=\eta(x,t)\ , 
\end{equation}
where $p_\text{atm}$ denotes the atmospheric pressure.
Evaluating the first of equations \eqref{eq:mom} at $x=\pm L$ and using the wall-boundary conditions \eqref{eq:wallbcs}, we find
\begin{equation}\label{eq:npress}
    p_x(\pm L,y,t)=0\ .
\end{equation}
Differentiating the second of conditions \eqref{eq:pressbc} with respect to $x$ and using \eqref{eq:npress}, we obtain
$\eta_x(\pm L,t) \, p_y(\pm L,\eta(\pm L,t),t)=0$,
which implies the additional boundary condition
\begin{equation}
\eta_x(L,t)=\eta_x(-L,t)=0
\end{equation}
since $p_y(\pm L,\eta(\pm L,t),t)$ is not necessarily zero.
Similarly, differentiating the irrotationality condition~\eqref{eq:irot} with respect to $y$, we obtain
\begin{equation}\label{eq:newbc1}
    v_{yx}(\pm L,y,t)=u_{yy}(\pm L,y,t)=0\ .
\end{equation}
Thus, differentiating the mass conservation \eqref{eq:mass} with respect to $x$ and then using \eqref{eq:newbc1}, we find
\begin{equation}
u_{xx}(\pm L,y,t)=-v_{yx}(\pm L,y,t)=0\ .
\end{equation}
This justifies the wall-boundary conditions used for Boussinesq-type systems here and in \cite{ADM2009}.

Similar results are true in three-dimensional domains as well,  independently of the bottom topography. For example, if $\bu(\bx,z,t)$ denotes the velocity of the fluid at position $(\bx,z)=(x,y,z)$  and time $t$ which satisfies   the three-dimensional Euler equations with slip-wall boundary conditions $\bu(\bx,z,t)\cdot \bn=0$ on the boundary  $\bx\in\partial \Omega$, $z\in[-D,\eta]$, then  the Neumann boundary conditions are $\nabla\eta(\bx,t)\cdot\bn=0$ and $\nabla(\Div\bu(\bx,z,t))\cdot\bn=0$ on $\partial\Omega$. It is noted that the function that describes the depth (bottom topography) does not appear in any of these boundary conditions.

\vspace*{3mm}
\noindent
\textbf{Acknowledgements.} This work was partially supported by a grant from the National Science Foundation (NSF-DMS 2206270 to D. Mantzavinos).



\bibliographystyle{plain}
\bibliography{references}

\begin{thebibliography}{10}

\bibitem{Adamy2011}
K.~Adamy.
\newblock Existence of solutions for a boussinesq system on the half line and
  on a finite interval.
\newblock {\em Discrete Continuous Dynamical Syst. A}, 29:25--49, 2011.

\bibitem{AD2012}
D.~Antonopoulos and V.~Dougalis.
\newblock {Numerical solution of the ``classical'' Boussinesq system}.
\newblock {\em Math. Comp. Simul.}, 82:984--1007, 2012.

\bibitem{ADM2009}
D.~Antonopoulos, V.~Dougalis, and D.~Mitsotakis.
\newblock {Initial-boundary value problems for the Bona-Smith family of
  Boussinesq systems}.
\newblock {\em Adv. Differential Equations}, 14:27--53, 2009.

\bibitem{ADM2010}
D.~Antonopoulos, V.~Dougalis, and D.~Mitsotakis.
\newblock {Numerical solution of Boussinesq systems of the Bona-Smith family}.
\newblock {\em Appl. Num. Math.}, 60:314--336, 2010.

\bibitem{BC1998}
J.~Bona and M.~Chen.
\newblock {A Boussinesq system for two-way propagation of nonlinear dispersive
  waves}.
\newblock {\em Physica D: Nonlinear Phenomena}, 116:191--224, 1998.

\bibitem{BCS2002}
J.~Bona, M.~Chen, and J.-C. Saut.
\newblock {Boussinesq equations and other systems for small-amplitude long
  waves in nonlinear dispersive media. I: Derivation and linear theory}.
\newblock {\em J.Nonlin. Sci.}, 12, 2002.

\bibitem{BCS2004}
J.~Bona, M.~Chen, and J.-C. Saut.
\newblock {Boussinesq equations and other systems for small-amplitude long
  waves in nonlinear dispersive media: II. The nonlinear theory}.
\newblock {\em Nonlinearity}, 17:925, 2004.

\bibitem{Bous1871}
J.~Boussinesq.
\newblock Th\'{e}orie de l'intumescence liquide appel\'{e}e onde solitaire ou
  de translation se propageant dans un canal rectangulaire.
\newblock {\em CR Acad. Sci. Paris}, 72:755--759, 1871.

\bibitem{Bous1872}
J.~Boussinesq.
\newblock Th\'{e}orie des ondes et des remous qui se propagent le long d'un
  canal rectangulaire horizontal, en communiquant au liquide contenu dans ce
  canal des vitesses sensiblement pareilles de la surface au fond.
\newblock {\em J. Math. Pures Appl.}, 17:55--108, 1872.

\bibitem{CWW2004}
G.~Chen, Y.~Wang, and S~Wang.
\newblock Initial boundary value problem of the generalized cubic double
  dispersion equation.
\newblock {\em J. Math. Anal. Appl.}, 299:563--577, 2004.

\bibitem{CT1987}
M.~Crouzeix and V.~Thom{\'e}e.
\newblock {The stability in $L_p$ and $W_p^1$ of the $L_2$-projection onto
  finite element function spaces}.
\newblock {\em Math. Comp.}, 48:521--532, 1987.

\bibitem{BF1973}
C.~de~Boor and G.~Fix.
\newblock Spline approximation by quasiinterpolants.
\newblock {\em Journal of Approximation Theory}, 8:19--45, 1973.

\bibitem{dgsv2018}
B.~Deconinck, Q.~Guo, E.~Shlizerman, and V.~Vasan.
\newblock {Fokas's unified transform method for linear systems}.
\newblock {\em Quart. Appl. Math.}, 76:463--488, 2018.

\bibitem{dtv2014}
B.~Deconinck, T.~Trogdon, and V.~Vasan.
\newblock {The method of Fokas for solving linear partial differential
  equations}.
\newblock {\em SIAM Rev.}, 56:159--186, 2014.

\bibitem{DMS2007}
V.~Dougalis, D.~Mitsotakis, and J.-C. Saut.
\newblock {On some Boussinesq systems in two space dimensions: Theory and
  numerical analysis}.
\newblock {\em ESAIM: Math. Model. Num. Anal.}, 41:825--854, 2007.

\bibitem{DMS2010}
V.~Dougalis, D.~Mitsotakis, and J.-C. Saut.
\newblock {Boussinesq systems of Bona-Smith type on plane domains: theory and
  numerical analysis}.
\newblock {\em J. Sci. Comp.}, 44(2):109--135, 2010.

\bibitem{ff2008}
N.~Flyer and A.~Fokas.
\newblock {A hybrid analytical numerical method for solving evolution partial
  differential equations. I: the half-line}.
\newblock {\em Proc. R. Soc. A}, 464:1823--1849, 2008.

\bibitem{f1997}
A.~Fokas.
\newblock {A unified transform method for solving linear and certain nonlinear
  PDEs}.
\newblock {\em Proc. R. Soc. A}, 453:1411--1443, 1997.

\bibitem{fokas2008}
A.~Fokas.
\newblock {\em A unified approach to boundary value problems}.
\newblock SIAM, 2008.

\bibitem{ffss2009}
A.~Fokas, N.~Flyer, S.~Smitheman, and E.~Spence.
\newblock {A semi-analytical numerical method for solving evolution and
  elliptic partial differential equations}.
\newblock {\em J. Comput. Appl. Math.}, 227:59--74, 2009.

\bibitem{fhm2017}
A~Fokas, A.~Himonas, and D.~Mantzavinos.
\newblock {The nonlinear Schr\"odinger equation on the half-line}.
\newblock {\em Trans. Amer. Math. Soc.}, 369:681--709, 2017.

\bibitem{fi2004}
A.~Fokas and A.~Its.
\newblock {The nonlinear Schr\"odinger equation on the interval}.
\newblock {\em J. Phys. A: Math. Gen.}, 37:6091--6114, 2004.

\bibitem{fl2012}
A.~Fokas and J.~Lenells.
\newblock {The unified method: I. Nonlinearizable problems on the half-line}.
\newblock {\em J. Phys. A.}, 45:195201, 2012.

\bibitem{FP2005}
A.~Fokas and B.~Pelloni.
\newblock {Boundary value problems for Boussinesq type systems}.
\newblock {\em Mathematical Physics, Analysis and Geometry}, 8:59--96, 2005.

\bibitem{fp2015}
A.~Fokas and B.~Pelloni.
\newblock {\em Unified transform for boundary value problems: applications and
  advances}.
\newblock SIAM, 2015.

\bibitem{hm2015a}
A.~Himonas and D.~Mantzavinos.
\newblock {On the initial-boundary value problem for the linearized Boussinesq
  equation}.
\newblock {\em Stud. Appl. Math.}, 134:62--100, 2015.

\bibitem{hm2015b}
A.~Himonas and D.~Mantzavinos.
\newblock {The ``good'' Boussinesq equation on the half-line}.
\newblock {\em J. Differential Equations}, 258:3107--3160, 2015.

\bibitem{hm2020}
A.~Himonas and D.~Mantzavinos.
\newblock {Well-posedness of the nonlinear Schr\"odinger equation on the
  half-plane}.
\newblock {\em Nonlinearity}, 33:5567--5609, 2020.

\bibitem{jgm2021}
C.~Johnston, C.~Gartman, and D.~Mantzavinos.
\newblock {The linearized classical Boussinesq system on the half-line}.
\newblock {\em Studies in Applied Mathematics}, 146:635--657, 2021.

\bibitem{Khakimzyanov2018a}
G.~S. Khakimzyanov and D.~Dutykh.
\newblock {Long wave interaction with a partially immersed body. Part I:
  Mathematical models}.
\newblock {\em Comm. Comp. Phys.}, pages 1--62, 2019.

\bibitem{MRKS2021}
D.~Mitsotakis, H.~Ranocha, D.~Ketcheson, and E.~S\"{u}li.
\newblock A conservative fully-discrete numerical method for the regularised
  shallow water wave equations.
\newblock {\em SIAM J. Sci. Comput.}, 43:B508--B537, 2021.

\bibitem{MSM2017}
D.~Mitsotakis, C.~Synolakis, and M.~McGuinness.
\newblock A modified galerkin/finite element method for the numerical solution
  of the serre-green-naghdi system.
\newblock {\em Int. J. Num. Meth. Fluids}, 83:755--778, 2017.

\bibitem{Nwogu93}
O.~Nwogu.
\newblock Alternative form of boussinesq equations for nearshore wave
  propagation.
\newblock {\em J. Waterway Port Coastal and Oc Eng}, 119(6):618--638, 1993.

\bibitem{Per67}
D.H. Peregrine.
\newblock Long waves on a beach.
\newblock {\em J. Fluid Mech.}, 27:815--827, 1967.

\bibitem{saut2012cauchy}
J.-C. Saut and L.~Xu.
\newblock {The Cauchy problem on large time for surface waves Boussinesq
  systems}.
\newblock {\em J. Math. Pures Appl.}, 97:635--662, 2012.

\bibitem{Schonbek1981}
M.E. Schonbek.
\newblock {Existence of solutions for the Boussinesq system of equations}.
\newblock {\em J. Differential Equations}, 42(3):325--352, 1981.

\bibitem{S1980}
R.~Schreiber.
\newblock Finite element methods of high-order accuracy for singular two-point
  boundary value problems with nonsmooth solutions.
\newblock {\em SIAM J. Numer. Anal.}, 17:547--566, 1980.

\bibitem{serre}
F.~Serre.
\newblock Contribution to the study of permanent and non-permanent flows in
  channels.
\newblock {\em La Houille Blanche}, 39:830--872, 1953.

\bibitem{s1994}
R.~Strichartz.
\newblock A guide to distribution theory and {F}ourier transforms.
\newblock {\em CRC}, 1994.

\bibitem{Thomee}
V.~Thom\'{e}e.
\newblock {\em Galerkin finite element methods for parabolic problems}.
\newblock Springer-Verlag Berlin Heidelberg, 2006.

\bibitem{walkley1999}
M.~Walkley.
\newblock {\em A numerical method for extended Boussinesq shallow-water wave
  equations}.
\newblock PhD thesis, University of Leeds, 1999.

\bibitem{WB1999}
M.~Walkley and M.~Berzins.
\newblock {A finite element method for the one-dimensional extended Boussinesq
  equations}.
\newblock {\em Int. J. Num. Meth. Fluids}, 29(2):143--157, 1999.

\bibitem{WB2002}
M.~Walkley and M.~Berzins.
\newblock {A finite element method for the two-dimensional extended Boussinesq
  equations}.
\newblock {\em Int. J. Num. Meth. Fluids}, 39:865--885, 2002.

\bibitem{Whitham2011}
G.B. Whitham.
\newblock {\em Linear and nonlinear waves}, volume~42.
\newblock John Wiley \& Sons, New York, 2011.

\bibitem{wm2021}
J.~Wimmergren and D.~Mantzavinos.
\newblock The linear {L}ugiato-{L}efever equation with forcing and nonzero
  periodic or nonperiodic boundary conditions.
\newblock {\em Submitted}, 2022.

\bibitem{zl2013}
D.~Zill and W.~Wright.
\newblock {\em Differential Equations with Boundary-Value Problems}.
\newblock Brooks/Cole, 2013.

\end{thebibliography}
%
%
%
%
\end{document}